\documentclass[11pt]{article}

% ********************** Packages *************************************************
\usepackage{amsfonts}
\usepackage{amsthm}
\usepackage{amscd}
\usepackage{amssymb}
\usepackage{graphics}
\usepackage{amsmath}
\usepackage[english]{babel}
\usepackage[colorlinks=true,linkcolor=NavyBlue,urlcolor=RoyalBlue,citecolor=PineGreen,linktocpage=true]{hyperref}
\usepackage{bbm}
\usepackage{yfonts}
\usepackage{mathrsfs}
\usepackage{upgreek}
\usepackage{color}
\usepackage{epsfig}
\usepackage{graphicx}
\usepackage{enumerate}
\usepackage[usenames,dvipsnames]{xcolor}
\usepackage{url}
\usepackage{mathabx}
\usepackage{stmaryrd}
\usepackage{relsize}

% ********************** Geometry package ***************************************** 
\usepackage{geometry}
\geometry{a4paper, total={21cm,29.7cm}, left=2.5cm, right=2.5cm, top=3cm, bottom=3cm}
% See geometry.pdf to learn the layout options. There are lots.
%\geometry{letterpaper}                   % ... or a4paper or a5paper or ... 
%\geometry{landscape}                % Activate for for rotated page geometry
% \usepackage[parfill]{parskip}    % Activate to begin paragraphs with an empty line rather than an indent
% \usepackage{epstopdf}            % Activate if figures are in eps 

\numberwithin{equation}{section}

% ********************** Theorems *************************************************

\newtheorem{theorem}{Theorem}[section]
\newtheorem{lemma}[theorem]{Lemma}
\newtheorem{corollary}[theorem]{Corollary}
\newtheorem{proposition}[theorem]{Proposition}

\newtheorem{claim}[theorem]{Claim}

\theoremstyle{definition}
\newtheorem{definition}[theorem]{Definition}

\theoremstyle{remark}
\newtheorem{remark}[theorem]{Remark}

% ********************* New commands **********************************************

\newcommand{\BF}{{\mathbb{F}}}

\newcommand{\BN}{{\mathbb{N}}}

\newcommand{\BQ}{{\mathbb{Q}}}
\newcommand{\BR}{{\mathbb{R}}}

\newcommand{\BZ}{{\mathbb{Z}}}

\newcommand{\FG}{{\mathfrak{G}}}

\newcommand{\FX}{{\mathfrak{X}}}

\newcommand{\Fa}{{\mathfrak{a}}}
\newcommand{\Fb}{{\mathfrak{b}}}
\newcommand{\Fc}{{\mathfrak{c}}}

\newcommand{\CA}{{\mathcal{A}}}

\newcommand{\CH}{{\mathcal{H}}}

\newcommand{\CM}{{\mathcal{M}}}
\newcommand{\CN}{{\mathcal{N}}}
\newcommand{\CO}{{\mathcal{O}}}

\newcommand{\CS}{{\mathcal{S}}}
\newcommand{\CT}{{\mathcal{T}}}

\newcommand{\ST}{{\mathscr{T}}}

\newcommand{\scrC}{{\mathscr{C}}}
\newcommand{\scrV}{{\mathscr{V}}}

\newcommand{\scrG}{{\mathscr{G}}}
\newcommand{\scrE}{{\mathscr{E}}}

\newcommand{\sm}{{\mathsf{m}}}
\newcommand{\sg}{{\mathsf{g}}}

\newcommand{\ind}{{\mathbbm{1}}}
\newcommand{\dist}{{\mathrm{dist}}}
\newcommand{\lk}{{\mathrm{lk}}}

\newcommand{\ls}{{\llbracket}}
\newcommand{\rs}{{\rrbracket}}

\newcommand{\qu}{{/\hspace{-2.9pt}/}}

% ******************** Greek shortcuts *******************************************

\newcommand{\om}{{\omega}}
\newcommand{\Om}{{\Omega}}

\newcommand{\si}{{\sigma}}
\newcommand{\Si}{{\Sigma}}
\newcommand{\lam}{{\lambda}}

% ******************* writing comments *******************************************

%**************************************************************************************
%*********************************** Begin Document ***********************************
%**************************************************************************************

\title{On groups and simplicial complexes}

\author{Alexander Lubotzky\footnote{Research supported in part by the BSF, ERC, ISF and NSF.}$~^{,\dag}$,\,\, Zur Luria\footnote{Supported by Dr.~Max R\"ossler, the Walter Haefner Foundation and the ETH Foundation} \,\,and\,\, Ron Rosenthal}

%\date{} 

\begin{document}
	\maketitle

\begin{abstract}
	The theory of $k$-regular graphs is closely related to group theory. Every $k$-regular, bipartite graph is a Schreier graph with respect to some group $G$, a set of generators $S$ (depending only on $k$) and a subgroup $H$. The goal of this paper is to begin to develop such a framework for $k$-regular simplicial complexes of general dimension $d$. Our approach does not directly generalize the concept of a Schreier graph, but still presents an extensive family of $k$-regular simplicial complexes as quotients of one universal object: the $k$-regular $d$-dimensional arboreal complex, which is itself a simplicial complex originating in one specific group depending only on $d$ and $k$. Along the way we answer a question from \cite{PR12} on the spectral gap of higher dimensional Laplacians and prove a high dimensional analogue of Leighton's graph covering theorem. This approach also suggests a random model for $k$-regular $d$-dimensional multicomplexes.
\end{abstract}

% ###################################################################################
% ###################################################################################
% ###################################################################################
% ###################################################################################

\section{Introduction}

Groups play an important role in the study of graphs, especially those with some symmetry, such as Cayley graphs. However, even general $k$-regular graphs, i.e., graphs in which the symmetry is restricted only to the local neighborhoods of vertices, are intimately connected with group theory. Let $G$ be a group with a set of generators $S$. Recall that the Schreier graph with respect to a subgroup $H\leq G$, denoted $\mathrm{Scr}(G/H;S)$, is the quotient of the Cayley graph $\mathrm{Cay}(G;S)$ by the action of $H$ (see Appendix \ref{sec:appendix}).

Regular graphs are usually Schreier graphs, but not always (see Propositions \ref{prop:bipartite_graphs} and \ref{prop:not_a_Schrier_graph}). In particular, we have the following proposition:

\begin{proposition}\label{prop:the_graph_case}
	Every connected, bipartite $k$-regular graph $X$ is a Schreier graph, namely, there exists a group $G$ with a subset of generators $S$ and a subgroup $H\leq G$, such that $X\cong \textrm{Sch}(G/H;S)$.
\end{proposition}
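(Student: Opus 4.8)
The plan is to realize $X$ as the Schreier graph of a subgroup of the symmetric group $\mathrm{Sym}(V)$, where $V=V(X)$, with generators coming from a decomposition of $E(X)$ into perfect matchings. The starting point is that a bipartite $k$-regular graph admits a proper edge-coloring with exactly $k$ colors: by K\"onig's edge-coloring theorem the chromatic index of a bipartite graph equals its maximum degree, so $E(X)$ partitions into $k$ color classes $M_1,\dots,M_k$, each of which is a perfect matching. Indeed, each class is a matching by properness, and it is perfect because there are exactly $k$ colors while every vertex has degree $k$, so each vertex meets each color precisely once. (In the infinite case the same decomposition can be obtained by a compactness argument, so I would phrase the statement to cover both.)

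Next I would turn each matching into a group element. Each $M_i$ determines a fixed-point-free involution $\sigma_i\in\mathrm{Sym}(V)$, namely the permutation swapping the two endpoints of every edge of $M_i$ and fixing nothing (since $M_i$ is perfect). Set $S=\{\sigma_1,\dots,\sigma_k\}$ and $G=\langle S\rangle\le\mathrm{Sym}(V)$. Because each $\sigma_i$ is an involution, $S$ is symmetric, which is exactly what one wants from the generating set of a Schreier graph. Connectivity of $X$ translates directly into transitivity of the $G$-action on $V$: a path in $X$ from $u$ to $w$ is a sequence of color-steps, hence corresponds to an element of $G$ carrying $u$ to $w$.

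I would then fix a base vertex $v_0$, let $H=\mathrm{Stab}_G(v_0)$, and consider the orbit map $gH\mapsto g\cdot v_0$, which is a bijection $G/H\to V$ by orbit–stabilizer together with transitivity. The remaining task is to check that this bijection is an isomorphism from $\mathrm{Sch}(G/H;S)$ onto $X$: in the Schreier graph $gH$ is joined to $\sigma_i gH$, and under the orbit map this edge becomes the pair $\{g\cdot v_0,\ \sigma_i g\cdot v_0\}$, i.e.\ a vertex together with its $M_i$-partner, which is exactly the color-$i$ edge of $X$ at that vertex. Letting $g$ and $i$ range over all values recovers precisely $E(X)$. One should also confirm the Schreier graph is simple: $\sigma_i$ has no fixed points, so there are no loops, and the $M_i$ are edge-disjoint, so a vertex cannot reach the same neighbor through two distinct generators, ruling out multiple edges.

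I expect the only real content to be the edge-coloring step; granting K\"onig's theorem, the remaining steps are formal. The points to watch are (a) ensuring the generating set and adjacency convention return an undirected simple graph, handled by the involution and edge-disjointness remarks above, and (b) the passage to infinite $X$ if one does not assume finiteness, where K\"onig's theorem is replaced by its compactness-based extension for bounded-degree bipartite graphs. Note that bipartiteness is genuinely used: a general $k$-regular graph may have chromatic index $k+1$ (or no perfect matching at all), so its edges need not split into $k$ perfect matchings, which is why the argument applies to the bipartite case.
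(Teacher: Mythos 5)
Your argument is correct and is essentially the paper's own proof: decompose the edge set of the connected bipartite $k$-regular graph into $k$ disjoint perfect matchings (the paper derives this via Hall's theorem in the appendix, equivalent to your use of K\"onig's edge-coloring theorem), convert each matching into an involution of $\CS_V$, let $G$ be the group they generate acting transitively by connectedness, and take $H$ to be a vertex stabilizer. The additional checks you flag (simplicity, the infinite case) are sound but do not change the substance.
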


The proof of Proposition \ref{prop:the_graph_case} is easy: Let $V$ be the set of vertices of $X$. Since $X$ is connected, $k$-regular and bipartite, one can write the edges of $X$ as the union $k$ disjoint perfect matchings (see Appendix \ref{sec:appendix}). This gives rise to $k$ permutations $S=\{s_1,\ldots,s_k\}$ (in fact $k$ involutions) in $\CS_V$ (the symmetric group of $V$). Let $G_0$ be the subgroup of $\CS_V$ generated by $S$. The connectedness  of $X$ implies that $G_0$ acts transitively on $V$, so $V$ can be identified with $G_0/H_0$, where $H_0=\textrm{Stab}(v_0)$ is the stabilizer group for some fixed $v_0\in V$. Furthermore, one can check that $X\cong \textrm{Sch}(G_0/H_0;S)$. 

The main goal of this work is to set up a similar framework for $d$-dimensional $k$-regular simplicial complexes (for arbitrary $d,k\in\BN$), namely, to present simplicial complexes of this type as suitable quotients of some group. A natural naive way to do it would be to start with the notion of a Cayley complex of a group. This is the clique complex of the Cayley graph, i.e., a set of $(j+1)$ vertices of the Cayley graph form a $j$-cell if and only if any two of its members are connected by an edge in the Cayley graph. The Schreier complex, will be then the clique complex of the Schreier graph. However, this method is very restrictive as it gives only clique complexes, i.e., those complexes which are completely determined by their graph structure (the $1$-skeleton). Moreover, these complexes are often non-regular in the usual sense of regularity of complexes, and in general, it is not easy to determine their dimension. 

Let us set now a few definitions and then give our different approach to the above goal: For $n\in\BN=\{1,2,\ldots\}$ we use the notation $[n]=\{1,2,\ldots,n\}$ and $\ls n\rs = \{0,1,\ldots,n\}$. Let $X$ be a simplicial complex with vertex set $V$. This means that $X$ is a non-empty collection of finite subsets of $V$, called \emph{cells}, which is closed under inclusion, i.e., if $\tau\in X$ and $\sigma\subseteq\tau$, then $\sigma\in X$. The \emph{dimension of a cell} $\sigma$ is $|\sigma|-1$, and $X^{j}$ denotes the set of $j$-cells (cells of dimension $j$) for $j\geq -1$. Without loss of generality, we always assume that $X^0=V$. The dimension of $X$, which we denote by $d$, is the maximal dimension of a cell in it. We will always assume that $d$ is finite and use the abbreviation $d$-complex for a simplicial complex of dimension $d$. We say that $X$ is \emph{pure} if every cell in $X$ is contained in at least one $d$-cell. Unless stated explicitly, any simplicial complex appearing in this paper is assumed to be pure. For a $(j+1)$-cell $\tau=\{\tau_0,\ldots,\tau_{j+1}\}$, its \emph{boundary} $\partial\tau$ is defined to be the set of $j$-cells $\{\tau\backslash\{\tau_i\}\}_{i=0}^{j+1}$. The \emph{degree of a $j$-cell} $\sigma$ in $X$, denoted $\deg(\sigma)\equiv\deg_X(\si)$, is defined to be the number of $(j+1)$-cells $\tau$ which contain $\sigma$ in their boundary. The complex $X$ is called $k$-regular (or more precisely upper $k$-regular) if $\deg_X(\si)=k$ for every $\si\in X^{d-1}$. 

Going back to graphs, i.e., $d=1$, the last definition recovers the notion of $k$-regular graphs. Proposition \ref{prop:the_graph_case} showed that such bipartite graphs are Schreier graphs. In fact, the proof of Proposition \ref{prop:the_graph_case} shows a bit more: The elements $s\in S$ are all of order $2$. Therefore $G_0$ is a quotient of the infinite group $T(k)=\langle \beta_1,\ldots,\beta_k ~:~ \beta_i^2=e,\,i=1,\ldots,k\rangle$, the free product of $k$ copies of the cyclic group of order $2$. Let $\pi:T(k)\to G$ be the unique epimorphism sending $\beta_i$ to $s_i$ for $1\leq i\leq k$. By pushing $H_0$ backward to $T(k)$, thus obtaining the subgroup $H=\pi^{-1}(H_0)$, we see that $X$ is actually isomorphic to $\textrm{Sch}(T(k)/H;B)$, where $B=\{\beta_1,\ldots,\beta_j\}$. Thus, $T(k)$ is a universal object in the sense that all bipartite, $k$-regular connected graphs are Schreier graphs of it and are thus quotients of the universal Cayley graph $\ST_k:=\mathrm{Cay}(T(k);B)$. Note that $\ST_k$ is simply the $k$-regular tree.

We would like to generalize this picture to higher dimensions, but as mentioned before, doing so will lead only to Cayley complexes and Schreier complexes which are clique complexes and are not necessarily $k$-regular. We will therefore take a different approach: Let $L(k)$ be the line graph of $\ST_k$, namely, the graph whose vertices are the edges of $\ST_k$ and two vertices of $L(k)$ are connected by an edge, if as edges of $\ST_k$, they share a common vertex. Denoting by $C_k$ the cyclic group of order $k$, one can verify that $L(k)$ is a $2(k-1)$-regular graph and is, in fact, isomorphic to the Cayley graph $\textrm{Cay}(G_{1,k};S)$, where $G_{1,k} = K_0 * K_1= \langle \alpha_0,\alpha_1 ~:~ \alpha_0^k=\alpha_1^k=e\rangle$ is the free product of two copies ($K_0$ and $K_1$) of $C_k$ and the set of generators is $S=\{\alpha_0^i,\alpha_1^i ~:~ i=1,\ldots,k-1\}$. The line graph of every connected, bipartite $k$-regular graph is therefore a quotient of this graph. 

Starting with the group $G_{1,k}$, one can recover the $k$-regular tree as follows: The vertices will be $(K_0\backslash G_{1,k}) \cup (K_1 \backslash G_{1,k})$ and the edges correspond to elements of $G_{1,k}$, where $g\in G_{1,k}$ gives rise to an edge connecting $K_0g$ and $K_1g$. As $|K_ig|=k$ for every $g\in G_{1,k}$, this is a bipartite, $k$-regular graph. The fact that $G_{1,k}$ is a free product of $K_0$ and $K_1$ implies that this is the $k$-regular tree (this will be a special case of Corollary \ref{cor:bijection_of_G_and_T}). 

We generalize this picture to arbitrary dimension $d$ as follows: Let $G_{d,k}$ be the free product of $(d+1)$ copies of the cyclic group of order $k$, namely $G_{d,k}= K_0 * K_1\ldots *K_d$, where $K_i\cong C_k$. From $G_{d,k}$ we construct a $d$-dimensional simplicial complex $T_{d,k}$ as follows: Define the $0$-cells of $T_{d,k}$ to be $\{K_{\widehat{i}}g ~:~ g\in G_{d,k},\, i\in \ls d\rs\}$, where for $i\in \ls d\rs$ we define $K_{\widehat{i}} = K_1* \ldots *K_{i-1}*K_{i+1}*\ldots K_d$, and set $T_{d,k}$ to be the pure $d$-complex\footnote{Note that in a pure $d$-complex one only need to specify the $0$-cells and $d$-cells in order to recover the whole structure.} whose $d$-cells are $\{\{K_{\widehat{0}}g,K_{\widehat{1}}g,\ldots,K_{\widehat{d}}g\} ~:~ g\in G_{d,k}\}$. It turns out that $T_{d,k}$ is an arboreal complex in the sense of \cite{PR12}. It is the unique universal object of the category of $k$-regular simplicial complexes of dimension $d$ (see Proposition \ref{prop:universal_property} for a precise statement). Moreover, for every $d$-lower path connected (see Section \ref{sec:multicomplexes}), $(d+1)$-partite, $k$-regular simplicial complex $X$ there is a surjective simplicial map $\pi:T_{d,k}\to X$.

The group $G_{d,k}$ acts from the right on the right cosets of $K_{\widehat{i}}$, i.e., the vertices of $T_{d,k}$, and this action gives rise to a simplicial action of $G_{d,k}$ on $T_{d,k}$. If $H$ is a subgroup of $G_{d,k}$, then we may consider the quotient $T_{d,k}/H$. As it turns out, the quotient is not always a simplicial complex in the strict sense, but is rather a multicomplex (see Section \ref{sec:multicomplexes} for a precise definition). Thus, it is natural to extend the category we are working with to the category $\scrC_{d,k}$ of $k$-regular multicomplexes of dimension $d$. 

However, there is another delicate point here which cannot be seen in dimension 1. Before explaining it, we need the following definition:
\begin{definition}[Line graph] \label{def:line_graph}
	Let $X$ be a $d$-complex. The line graph of $X$ (also known as the dual graph of $X$), denoted $\scrG(X)=(\scrV(X),\scrE(X))$, is defined by $\scrV(X)=X^d$ and $\scrE(X) =\{\{\tau,\tau'\}\in \scrV(X)\times \scrV(X) ~:~ \tau\cap\tau'\in X^{d-1}\}$. We denote by $\dist_X=\dist : \scrV(X)\times \scrV(X)\to \BN\cup\{0\}$ the graph distance on the line graph. 
\end{definition}

It can happen that the line graphs of two non-isomorphic complexes are identical. For example, let $X$ be a $d$-dimensional simplicial complex with $d\geq 2$, and choose two vertices $v_1,v_2\in X^0$ which do not have a common neighbor in the $1$-skeleton. If we identify $v_1$ and $v_2$, we obtain a new simplicial complex $Y$, together with a surjective simplicial map $\varphi:X\to Y$ which induces an isomorphism between the line graphs $\scrG(X)$ and $\scrG(Y)$. Furthermore, one can verify that the link of $Y$ at $v_1=v_2$ is not connected. 

We show in Subsection \ref{subsec:quotient_is_link_connected}, that the quotient $T_{d,k}/ H$, as above is always link-connected (see Subsection \ref{subsec:link_connected} for definition) and obtain a one to one correspondence between the link-connected objects $\scrC_{d,k}^{lc}$ in $\scrC_{d,k}$ and subgroups of $G_{d,k}$. Along the way, we show that every $k$-regular multicomplex $Y$ in $\scrC_{d,k}$ has a unique minimal \emph{(branch) cover} $X\in \scrC_{d,k}^{lc}$ with $\scrG(X)\cong \scrG(Y)$. 

Another application of the main theorem is an high-dimensional analogue of Leighton's graph covering theorem. In our context it says that every two finite objects in the category $\scrC_{d,k}$ have a common finite (branch) covering in the category. Interestingly, we do not know how to prove this combinatorial statement without appealing to our group theoretic machinery. 

In Section \ref{sec:examples} we present some examples. One of the examples we discuss there, shows that for $q$ a prime power, the Bruhat-Tits buildings $\widetilde{A}_d$ over a local field $F$ of residue class $q$ is a quotient of $T_{d,q+1}$. Limiting ourselves to $d=2$, and comparing the spectrum of $T_{2,q+1}$ which was calculated in \cite{PR12,Ro14} and the one of $\widetilde{A}_2$, which was described in \cite{GP14}, we deduce a negative answer to a question asked in \cite{PR12} about the spectral gap of high-dimensional Laplacians. 

The basic idea of this paper is quite simple, but the precise formulation needs quite a lot of notation, definitions and preparation. This is done in Sections \ref{sec:multicomplexes}-\ref{sec:group_interpretation_of_T_d_k}, while the correspondence is proved in Sections \ref{sec:from_subgroup_to_elements_of_the_category} and \ref{sec:from_simplicial_complexes_to_subgroups_and_back}. In Section \ref{sec:further_relations} we discuss further relations between properties of subgroups and their associated multicomplexes. In Section \ref{sec:examples} we present various examples: we describe the complexes associated with some natural subgroups of $G_{d,k}$ and the subgroups associated with some interesting complexes. 

Our approach enables one in principle to build systematically all finite, partite $k$-regular multicomplexes. First one may generate the link-connected ones as the quotients of $T_{d,k}$ by a subgroup, and then, all of them by identifications of cells as above (see also Section \ref{sec:random_multicomplexes}). In particular, we get a random model of such complexes (see Section \ref{sec:random_multicomplexes}). A drawback of our method is that in many cases (in fact in ``most'' cases) we get multicomplexes and not complexes. Every such multicomplex gives rise to a simplicial complex (by ignoring the multiplicity of the cells), but it is not so easy to decide whether the original object is already a simplicial complex or merely a multicomplex (see Subsection \ref{subsec:the_intersection_property} for more on this issue). We plan to come back to this random model in the future.

% **********************************************************************************
% **********************************************************************************

\section{Preliminaries}\label{sec:preliminaries}

In this short section, we collect some additional definitions and notation from the theory of abstract simplicial complexes which are used throughout the paper. 

Given a $d$-complex $X$ and $-1\leq j\leq d$, the \emph{$j$-th skeleton} of $X$, denoted $X^{(j)}$, is the set of cells in $X$ of dimension at most $j$, that is $X^{(j)} := \bigcup_{i=-1}^{j} X^i$. We say that a $d$-complex $X$ has a \emph{complete skeleton} if $X^j = \binom{V}{j+1}$ for every $j<d$.  

For a cell $\si\in X$, define its coboundary, denoted $\delta(\si)\equiv\delta_X(\si)$, to be $\delta(\si)=\{\tau\in X ~:~ \si\subset \tau,\, |\tau\setminus \si|=1\}$, which in particular satisfies $\deg_X(\si)=|\delta_X(\si)|$. 

For $1\leq j\leq d$, we say that $X$ is \emph{$j$-lower path connected} if for every $\si,\si'\in X^j$ there exists a sequence $\si=\si^0,\si^1,\ldots,\si^m=\si'$ of $j$-cells in $X$ such that $\si^{i-1}\cap \si^i \in X^{j-1}$ for every $1\leq i\leq m$. 
 
Let $X$ and $Y$ be a pair of $d$-complexes. We say that $\varphi:X\to Y$ is a simplicial map, if $\varphi:X^0\to Y^0$ is a map, extended to the remaining cells by $\varphi(\{\si_0,\ldots,\si_j\})=\{\varphi(\si_0),\ldots,\varphi(\si_j)\}$ such that $\varphi(\si)\in Y^j$ for every $0\leq j\leq d$ and $\si\in X^j$.

Given $\rho\in X$, the link of $\rho$ is a $(d-|\rho|)$-dimensional complex defined by 
\begin{equation}
	\lk_X(\rho) = \{\si\in X ~:~ \rho\amalg\si \in X\}
\end{equation}
where we use the notation $\rho \amalg\si $ when the union is disjoint, i.e. $\rho\cap \si=\emptyset$. 

\begin{definition}[Nerve complex]\label{def:nerve_complex}
	Let $\CA=(\CA_i)_{i\in I}$ be a family of nonempty sets. The nerve complex of $\CA$, denoted $\CN(\CA)$, is the simplicial complex with vertex set $I$, such that $\si\in \CN(\CA)$ for $\si \subseteq I$ if and only if $\bigcap_{i\in \si}\CA_i \neq \emptyset$. 
\end{definition}

% **********************************************************************************
% **********************************************************************************

\section{Multicomplexes and the category $\scrC_{d,k}$}\label{sec:multicomplexes}

In this section we introduce a category of certain combinatorial objects, which is the main topic of this paper. 

\subsection{Multicomplexes}\label{subsec:multicomplexes}

We start by describing the notion of a multicomplex, see also \cite{Kue15}. For a set $A$ and a (multiplicity) function $\sm:A\to \BN$, define $A_\sm = \{(a,r) ~:~ a\in A,\, r\in [\sm(a)]\}$. Similarly, for $a\in A$, denote $A_\sm(a)=\{(a,r) ~:~ r\in [\sm(a)]\}$. Let $V$ be a countable set and $\widetilde{X}=(X,\sm,\sg)$ a triplet, where 
\begin{itemize}
	\item $X$ is a simplicial complex with vertex set $V$,
	\item $\sm:X\to \BN$ is a function (called the multiplicity function) satisfying $\sm(\si)=1$ for $\si\in X^{(0)}$,
	\item $\sg : \{((\tau,r),\si)\in X_\sm \times X ~:~ \si\in\partial \tau\}\to X_\sm$ is a map (called the gluing map) satisfying $\sg((\tau,r),\si)\in A_\sm(\si)$ for every $(\tau,r)\in X_\sm$ and $\si\in \partial \tau$. So, $\sg$ tells us which copy of $\si$ is in the boundary of the $r$-th copy of $\tau$. 
\end{itemize}  

Elements of $X_\sm$ are called multicells, and are often denoted by $\Fa,\Fb,\Fc,\ldots$. We denote by $\iota:X_\sm\to X$ the forgetful map, namely, $\iota((\tau,r))=\tau$ for every $(\tau,r)\in X_\sm$.

As in the case of simplicial complexes, we define the dimension of a multicell $\Fa\in X_\sm$ to be $\dim_{\widetilde{X}}(\Fa):=\dim_X(\iota(\Fa))=|\iota(\Fa)|-1$. The set of multicells of dimension $j$ (abbreviated $j$-multicells) is denoted by $X_\sm^j=\{\Fa\in X_\sm ~:~ \dim(\Fa)=j\}$. Since the multiplicity of $0$-cells in any  multicomplex is one by definition, we tacitly identify $X^0$ and $X_\sm^0$ using the identification $v \leftrightarrow (v,1)$. For $-1\leq j\leq d$, we denote by $X_\sm^{(j)}=\bigcup_{i=-1}^j X_\sm^i$ the $j$-skeleton of $X_\sm$ (see Figure \ref{fig:multi_complex_1} for an illustration of a multicomplex).

\begin{figure}[h]
	\begin{center}
	\includegraphics[scale=0.55]{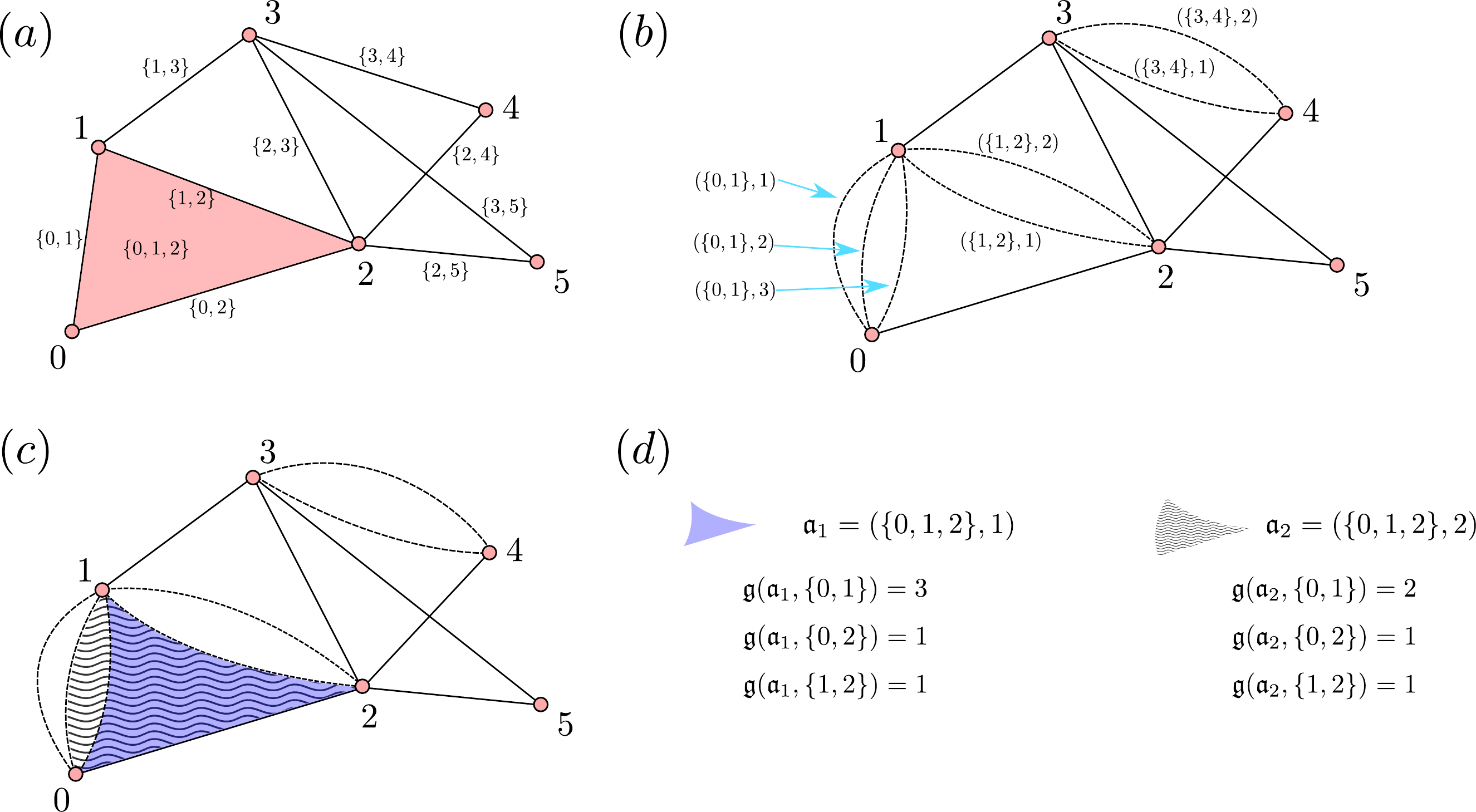}
	\caption{An example of a triple $(X,\sm,\sg)$. $(a)$ Illustration of the complex $X$ (note that unlike most complexes discussed in this paper, $X$ is not pure). $(b)$ The multiplicity of the edges is described. $(c)$ The multicomplex $X$ contains a unique triangle which has multiplicity $2$ in the multicomplex. The two $2$-multicells are illustrated by a wavy and a filled triangle. $(d)$ The gluing function of the two $2$-multicells is described. \label{fig:multi_complex_1}}
	\end{center}
\end{figure}

We define the multiboundary of a multicell $\Fa=(\tau,r)$ by
\begin{equation}
	\partial^m \Fa=\begin{cases}
		\{\sg(\Fa,\si) ~:~ \si\in \partial \iota(\Fa)\} &\text{ if }  \dim(\Fa)\geq 0\\
		\emptyset & \text{ if }   \dim(\Fa)=-1
		\end{cases},
\end{equation} 
namely, the set of multicells of dimension $\dim(\Fa)-1$ which are glued to $\Fa$. 

Using the multiboundary, one can define the set of multicells contained in a given multicell $\Fa\in X_\sm$ as follows: Declare $\Fa$ to be contained in itself, and for $j=\dim(\Fa)-1$ define $\Fb\in X^{j}_m$ to be contained in $\Fa$ if and only if $\Fb\in \partial^m \Fa$. Proceeding inductively from $j=\dim(\Fa)-2$ to $j=-1$, declare $\Fc\in X^j_m$ to be contained in $\Fa$ if there exists $\Fb\in X^{j+1}_m$ contained in $\Fa$ such that $\Fc\in \partial^m \Fb$. Note that this defines a partial order on the family of multicells which we denote by $\preceq$. 

\begin{definition}[Multicomplex] 
	Let $V$ be a countable set and $\widetilde{X}=(X,\sm,\sg)$ be a triplet as above. We say that $\widetilde{X}$ is a multicomplex if it satisfies the following consistency property: for every $\Fa=(\tau,r)\in X_\sm$ and every pair $\Fb=(\si,s)$ and $\Fb'=(\si',s')$ contained in $\Fa$ such that $\dim(\Fb)=\dim(\Fb')$ and $\rho:=\si\cap \si'\in X^{\dim(\Fb)-1}$ it holds that $\sg(\Fb,\rho) = \sg(\Fb',\rho)$. 
	
	 We say that $\widetilde{X}$ is a $d$-dimensional multicomplex ($d$-multicomplex) if the associated complex $X$ is $d$-dimensional. 
\end{definition}

Since the $0$-cells of a multicomplex always have multiplicity one, the consistency property is always satisfied for triples $(X,\sm,\sg)$, where $X$ is a simplicial complex of dimension $d\leq 2$. An illustration of the consistency property in the $3$-dimensional case can be found in Figure \ref{fig:multi_complex_2}.

\begin{figure}[h]
	\begin{center}
	\includegraphics[scale=0.5]{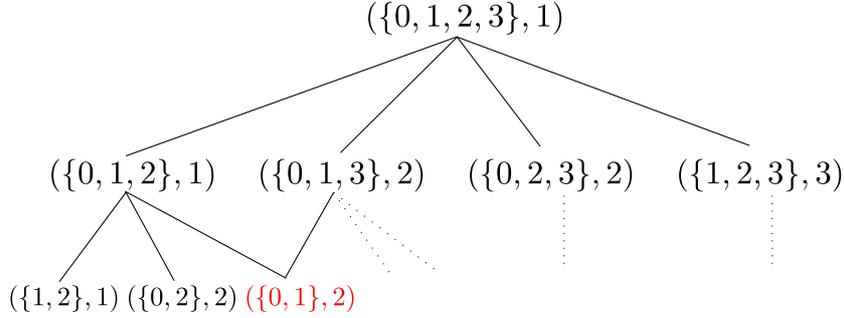}
	\caption{An example of the consistency property in dimension $3$. Assume that the complex $X$ is the full complex on $4$ vertices and that $(X,\sm,\sg)$ is a triplet which defines a multicomplex. Assuming that $\sg((\{0,1,2,3\},1),\{0,1,2\})=(\{0,1,2\},1)$ and $\sg((\{0,1,2,3\},1),\{0,1,3\})=(\{0,1,3\},2)$, the consistency property gurantees that on the joint edge of $\{0,1,2\}$ and $\{0,1,3\}$, that is $\{0,1\}$, we must have $\sg((\{0,1,2\},1),\{0,1\})=\sg((\{0,1,3\},2),\{0,1\})$. \label{fig:multi_complex_2}}
	\end{center}
\end{figure}

\begin{remark}\label{rem:multi_in_0_cells}
	Note that the requirement not to have multiplicity in the $0$-cells of a multicomplex $\widetilde{X}=(X,\sm,\sg)$, i.e. $\sm(v)=1$ for every $v\in X^0$, is only made in order to fix a description of the multicomplex. Indeed, any triplet $\widetilde{X}=(X,\sm,\sg)$, in which one also allows multiplicity in the $0$-cells can be transformed into a multicomplex by declaring the set of $0$-cells to be $X_\sm^0$ and making the appropriate changes in the cell names. One can verify that this changes the multiplicity of each $0$-cell to $1$ while preserving the structure of the multicomplex, namely, the partial order of containment. 
\end{remark}

One can verify that if $\widetilde{X}$ is a multicomplex, then for every $\Fa\in X_\sm$ there exists a unique bijection $f:\{\si\in X ~:~ \sigma\subseteq \iota(\Fa)\}\to \{\Fb\in X_\sm ~:~ \Fb\preceq \Fa\}$ such that $\iota\circ f$ is the identity, that is, for every $\si\subset \iota(\Fa)$ there exists a unique multicell $\Fb\preceq \Fa$ with $\iota(\Fb)=\si$.

A $d$-multicomplex is called pure if each of its multicells is contained in at least one $d$-multicell. Throughout the paper we assume that all multicomplexes are pure. 

Similarly to the case of simplicial complexes, for $\Fa\in X_\sm$, we set $\delta_{\widetilde{X}}(\Fa)\equiv\delta(\Fa) =\{\Fb\in X_\sm ~:~\Fa\in \partial^m\Fb\}$, and define the degree of a cell $\Fa\in X_\sm^j$ by $\deg_{\widetilde{X}}(\Fa)\equiv\deg(\Fa):=|\delta_{\widetilde{X}}(\Fa)|$. 
% that is,  to be the number of $(j+1)$-multicells $(\tau,s)\in X^{j+1}_m$ such that $\si\subset \tau$ and $\varphi(\tau,s,\si)=r$. 
We say that a $d$-multicomplex is $k$-regular (or more precisely upper $k$-regular), if the degree of any $(d-1)$-multicell in $X_\sm$ is $k$. 

Two multicells of the same dimension are called neighbors if they contain a common codimension $1$ multicell\footnote{Note that according to the above definition, each multicell is a neighbor if itself.}. A sequence of multicells of the same dimension $\tau(0),\tau(1),\ldots,\tau(m)$ is called a path if $\tau(r-1)$ and $\tau(r)$ are neighbors for every $1\leq r\leq m$. For $1\leq j\leq d$, we say that $\widetilde{X}$ is $j$-lower path connected if for every pair of $j$-multicells $\Fa,\Fa'\in X_\sm^j$, there exists a path from $\Fa$ to $\Fa'$. 

Let $\widetilde{X}=(X,\sm,\sg)$ and $\widetilde{Y}=(Y,\sm',\sg')$ be a pair of $d$-multicomplexes. We say that $\widetilde{\varphi}$ is a simplicial multimap from $\widetilde{X}$ to $\widetilde{Y}$, if it is a map from $X_\sm$ to $Y_{\sm'}$ such that the following conditions hold:
\begin{itemize}
	\item there exists a simplicial map $\varphi:X\to Y$ such that $\iota\circ \widetilde{\varphi}(\Fa) = \varphi(\iota(\Fa))$ for every $\Fa\in X_\sm$, that is, $\widetilde{\varphi}$ extends a simplicial map $\varphi$ by sending each multicell associated with a cell $\si$ to a multicell associated with the cell $\varphi(\si)$. 
	\item $\sg'(\widetilde{\varphi}(\Fa),\varphi(\si))=\widetilde{\varphi}(\sg(\Fa,\si))$ for every $\Fa\in X_\sm$ and $\si\in \partial \iota(\Fa)$, that is, $\widetilde{\varphi}$ preserves the gluing structure of $\widetilde{X}$ by gluing $\widetilde{\varphi}(\Fa)$ to the copy of $\varphi(\si)$ given by $\widetilde{\varphi}(\sg(\Fa,\si))$, for $\si\in \partial \iota(\Fa)$. 
\end{itemize}
The simplicial map $\varphi$ associated with the simplicial multimap $\widetilde{\varphi}$ is called the base map of $\widetilde{\varphi}$. Note that one can recover the base map of a simplicial multimap $\widetilde{\varphi}$, by sending $\si\in X$ to $\iota\circ \widetilde{\varphi}((\si,1))$.

% **************************************************************

\subsection{Link-connected multicomplexes}\label{subsec:link_connected}

In this subsection we wish to identify a special set of multicomplexes which we call link-connected. We start by defining the link of a multicell in a multicomplex. In order to give a simple description of the links we describe them using the indexing of the multicells in the original multicomplex. In particular, this might lead to the existence of $0$-cells with multiplicity. The description can be transformed into a ``formal'' multicomplex, i.e., removing the multiplicity of the $0$-cells as explained in Remark \ref{rem:multi_in_0_cells} and reindexing the multicells.  

\begin{definition}[link of a multicell]
	Let $\widetilde{X}=(X,\sm,\sg)$ be a multicomplex, let $\Fa\in X_m$ be a multicell,  and denote by $\rho=\iota(\Fa)$ the corresponding cell in $X$. The link of $\Fa$ is a $(d-|\rho|)$-multicomplex, denoted by $\mathrm{lk}_{\widetilde{X}}(\Fa)=(\mathrm{lk}_{X}(\Fa),\sm_\Fa,\sg_\Fa)$, where 
	\begin{equation}
		\mathrm{lk}_{X}(\Fa) = \{\tau \in X ~:~ \exists \Fb\in X_\sm \text{ containing } \Fa \text{ such that } \iota(\Fb)= \rho\amalg \tau\}.
	\end{equation}
	The multiplicity of $\tau\in \lk_X(\Fa)$ is 
	\begin{equation}
		\sm_\Fa(\tau) = |\{\Fb\in X_\sm ~:~ \Fb \text{ contains } \Fa \text{ and } \iota(\Fb)=\rho\amalg \tau\}|.
	\end{equation}
	Instead of using the set $\lk_X(\Fa)_{\sm_\Fa}$ to denote the multicells, we use the natural indexing induced from the original multicomplex, that is, we denote the multicells associated with $\tau\in \lk_X(\Fa)$ by 
	\begin{equation}
		M_\Fa(\tau):=\big\{(\tau,i) ~:~ (\tau\amalg \rho,i) \text{ contains } \Fa \text{ in }\widetilde{X}\big\},
	\end{equation}
	so that $\sm_\Fa(\tau)=|M_\Fa(\tau)|$. 

Finally, the gluing function $\sg_\Fa$ is defined as follows: given a multicell $(\tau,i)$ in $\lk_{\widetilde{X}}(\Fa)$ and $\si\in \partial \tau$ let $\sg_\Fa((\tau,i),\si) = (\si,j)$, where $j$ is the unique index such that $\sg((\tau\amalg \rho,i),\si\amalg \rho) = (\si\amalg \rho,j)$.
\end{definition}

Note that if $\widetilde{X}$ has no multiplicity, namely, it is a standard simplicial complex, we recover the standard definition of the link of a cell.

\begin{definition}[link-connected multicomplex]
	Let $\widetilde{X}=(X,\sm,\sg)$ be a $d$-multicomplex. We say that $\widetilde{X}$ is \emph{link-connected} if for every $\Fa\in X^{(d-2)}_\sm$ the link $\lk_{\widetilde{X}}(\Fa)$ is connected, that is, its one skeleton is a connected multigraph.
\end{definition}

\begin{figure}[h]
	\begin{center}
	\includegraphics[scale=1.6]{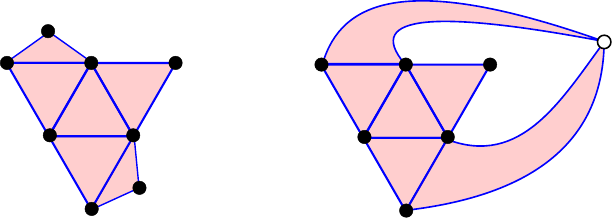}
	\caption{On the left, a link-connected complex. On the right, a complex which is not link-connected (as can be seen by observing the link of the white $0$-cell). \label{fig:not_link_connected}}
	\end{center}
\end{figure}

\begin{proposition}\label{prop:equivalent_defn_of_lc}
	Let $\widetilde{X}=(X,\sm,\sg)$ be a pure $d$-multicomplex. Then, the following are equivalent:
	\begin{enumerate}[(1)]
		\item $\widetilde{X}$ is link-connected.
		\item For every $-1\leq j\leq d-2$ and $\Fa\in X^j_\sm$, the link $\lk_{\widetilde{X}}(\Fa)$ (which is a $(d-j-1)$-dimensional multicomplex) is $(d-j-1)$-lower path connected.
	\end{enumerate}
\end{proposition}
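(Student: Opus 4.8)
The plan is to prove the equivalence $(1)\Leftrightarrow(2)$ by first observing that the statement $(2)$ for a given $\Fa\in X^j_\sm$ is really a statement about the multicomplex $\lk_{\widetilde{X}}(\Fa)$ itself, so that both conditions can be phrased in terms of connectivity of iterated links. The key structural fact I would want is a \emph{tower} (or transitivity) property of links: for a multicell $\Fa$ and a multicell $\Fb\in\lk_{\widetilde{X}}(\Fa)$, one has a natural identification $\lk_{\lk_{\widetilde{X}}(\Fa)}(\Fb)\cong\lk_{\widetilde{X}}(\Fc)$, where $\Fc$ is the multicell of $\widetilde{X}$ containing $\Fa$ with $\iota(\Fc)=\iota(\Fa)\amalg\iota(\Fb)$. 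This is the multicomplex analogue of the standard simplicial identity $\lk_{\lk_X(\sigma)}(\tau)=\lk_X(\sigma\amalg\tau)$, and once it is set up carefully using the indexing conventions of the link definition, most of the argument becomes an induction on codimension.

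The implication $(2)\Rightarrow(1)$ is the easy direction: taking $j\in\{-1,\ldots,d-2\}$ and $\Fa\in X^j_\sm$, condition $(2)$ says $\lk_{\widetilde{X}}(\Fa)$ is $(d-j-1)$-lower path connected, and since lower path connectivity in the top dimension of a multicomplex is exactly connectivity of its one-skeleton when that top dimension equals $1$ — and more generally implies connectivity of the one-skeleton — one recovers link-connectedness directly. For the substantive direction $(1)\Rightarrow(2)$, I would argue by downward induction on $j$, or equivalently by induction on the codimension $d-j-1$ of the link. First I would record the base case: for $\Fa\in X^{d-2}_\sm$, $\lk_{\widetilde{X}}(\Fa)$ is a $1$-dimensional multicomplex (a multigraph), and $1$-lower path connectivity is precisely connectivity of the one-skeleton, which is the hypothesis in $(1)$. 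For the inductive step, given $\Fa\in X^j_\sm$ with $j<d-2$, I want to show that any two top-dimensional multicells $\Fb,\Fb'$ of $L:=\lk_{\widetilde{X}}(\Fa)$ can be joined by a lower path in $L$. The idea is to connect them through a chain of codimension-one faces: using the tower property, the link in $L$ of any $(d-j-2)$-dimensional multicell of $L$ is itself a link of the form $\lk_{\widetilde{X}}(\Fc)$ with $\Fc\in X^{d-2}_\sm$, hence connected by $(1)$; this lets me pivot between top multicells sharing a common face and propagate along the one-skeleton of $L$ while staying path-connected.

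Concretely, the engine of the inductive step is to reduce a path between $d$-cells to connectivity of lower links: I would pick any vertex-level (or low-dimensional) multicell $\Fw$ of $L$ lying in both $\Fb$ and $\Fb'$ after moving them into a common neighborhood, then use that $\lk_L(\Fw)$ is link-connected by the inductive hypothesis applied inside $\widetilde{X}$ (via the tower identity), giving a lower path in $\lk_L(\Fw)$ that lifts to a lower path in $L$ between $\Fb$ and $\Fb'$. The main obstacle I anticipate is \emph{not} the topological idea, which mirrors the classical simplicial statement, but the bookkeeping around multiplicities and the gluing map $\sg$: one must check that the tower identification of iterated links is a genuine isomorphism of multicomplexes (respecting $\sm$ and $\sg$, not merely the underlying complexes), and that a lower path in an iterated link indeed lifts to a lower path in $\widetilde{X}$ with the multicells correctly glued. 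The consistency property in the definition of a multicomplex is exactly what should make these liftings well defined, so the crux is to invoke it precisely at each pivot to guarantee that the copies of shared codimension-one faces match up along the path.
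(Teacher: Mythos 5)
Your proposal is correct in outline and rests on the same two ingredients as the paper's proof --- the tower identity $\lk_{\lk_{\widetilde{X}}(\Fa)}(\Fb)\cong\lk_{\widetilde{X}}(\Fc)$ and purity of links --- and your treatment of $(2)\Rightarrow(1)$ matches the paper's. But the induction for $(1)\Rightarrow(2)$ is organized differently. You induct downward on $\dim\Fa$: for $\Fa\in X^j_\sm$ you use connectivity of the one-skeleton of $L=\lk_{\widetilde{X}}(\Fa)$ to produce a chain of top multicells in which consecutive ones share a $0$-multicell $\Fw$, and then invoke the \emph{full inductive hypothesis} (lower path connectivity of $\lk_L(\Fw)\cong\lk_{\widetilde{X}}(\Fc)$ with $\dim\Fc=j+1$) to upgrade each vertex-sharing pivot to a genuine codimension-one chain in one jump. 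The paper instead fixes $\Fa$ once and for all and inducts on the dimension $i$ of the face shared by consecutive cells of the chain, from $i=0$ up to $i=d-j-2$; at each stage it needs only the \emph{connectivity} (not lower path connectivity) of $\lk_{\widetilde{X}}(\widetilde{\Fc})$ for the $(j+i+1)$-multicells $\widetilde{\Fc}$ sitting over the shared faces, which is exactly hypothesis $(1)$, so no auxiliary inductive statement about links of other cells is carried along. Your scheme is slightly more economical to state (one refinement step per level of the recursion) at the cost of a stronger inductive hypothesis; the paper's is more elementary in that every refinement appeals only to $(1)$ directly. Both liftings require exactly the bookkeeping you identify (the consistency property guaranteeing that shared codimension-one multicells glue coherently along the lifted path).

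One sentence of your sketch is off and should be repaired in a full write-up: the link in $L$ of a $(d-j-2)$-dimensional multicell of $L$ corresponds to $\lk_{\widetilde{X}}(\Fc)$ with $\dim\Fc=d-1$, which is a $0$-dimensional set of points and is not connected in general, so it cannot serve as the pivot. The correct pivots are the ones you use in your final paragraph, namely $0$-multicells $\Fw$ of $L$, for which $\Fc$ has dimension $j+1\le d-2$ and the inductive hypothesis applies; with that correction the argument closes.
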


\begin{proof}
	$(1)\Rightarrow (2)$. Fix $-1\leq j\leq d-2$ and $\Fa\in X^j_\sm$. We prove the claim by induction on $0\leq i\leq d-j-2$ of the following statement: 
\begin{quote}	
	For every pair of $(d-j-1)$-multicells $\Fb,\Fb'$ in $\lk_{\widetilde{X}}(\Fa)$, there exists $l\in \BN\cup\{0\}$ and a sequence $\Fb=\Fb_0,\Fb_1,\ldots,\Fb_l=\Fb'$ of $(d-j-1)$-multicells in $\lk_{\widetilde{X}}(\Fa)$, such that for every $1\leq r\leq l$, the multicells $\Fb_r$ and $\Fb_{r-1}$ contain a common $i$-multicell. 
\end{quote}	

	For $i=0$, this follows from the assumption that $\widetilde{X}$ is link connected and pure. Indeed, let $\Fb,\Fb'$ be as above, and fix a pair of vertices $v\in \Fb$ and $v'\in \Fb'$. Since $\widetilde{X}$ is link-connected, one can can find a sequence of vertices $v=v_0,v_1,\ldots,v_l=v'$ such that each consecutive pair of vertices belong to some common multiedge in $\lk_{\widetilde{X}}(\Fa)$. For $1\leq i\leq l$, let $\Fb_i$ be any $(d-j-1)$-multicell containing the multiedge connecting $v_i$ and $v_{i+1}$ along the path (which must exist since the link is pure), we conclude that $\Fb,\Fb_1,\ldots,\Fb_l,\Fb'$ is the required path of $(d-j-1)$-multicells. 
	
	Next, assume the statement holds for some $0\leq i<d-j-2$ and let $\Fb,\Fb'$ be a pair of $(d-j-1)$-multicells in $\lk_{\widetilde{X}}(\Fa)$. By assumption, one can find $l\in\BN\cup\{0\}$ and sequence of $(d-j-1)$-multicells $\Fb=\Fb_0,\ldots,\Fb_l=\Fb'$ in $\lk_{\widetilde{X}}(\Fa)$, such that, for every $1\leq r\leq l$, the multicells $\Fb_{r-1}$ and $\Fb_{r}$ contain a common $i$-multicell. Hence, in order to complete the induction step, we only need to show that for every $1\leq r\leq l$, one can find a path of $(d-j-1)$-multicells connecting $\Fb_{r-1}$ and $\Fb_r$, such that any consecutive pair of the multicells share a common $(i+1)$-multicell. 
	To this end, fix $1\leq r\leq l$, and denote by $\Fc=(\rho,s)$ a common $i$-multicell contained in $\Fb_{r-1}$ and $\Fb_r$. Using the definition of the link $\lk_{\widetilde{X}}(\Fa)$, we obtain that $\widetilde{\Fc}:=(\rho\amalg \iota(\Fa),s)$ is a $(j+i+1)$-multicell in $\widetilde{X}$. Since $(a)$ $\lk_{\widetilde{X}}(\widetilde{\Fc}) = \lk_{\lk_{\widetilde{X}}(\Fa)}(\Fc)$, $(b)$ by assumption, $\lk_{\widetilde{X}}(\widetilde{\Fc})$ is connected, and $(c)$ $\Fb_r$ and $\Fb_{r-1}$ both have corresponding $0$-cells $v$ and $v'$ in the link $\lk_{\widetilde{X}}(\widetilde{\Fc})$; it follows that one can find a path $v=v_0,\ldots,v_t=v'$ in $\lk_{\widetilde{X}}(\widetilde{\Fc})$, connecting $v$ and $v'$. Using the correspondence from the link to its original complex, the path $v=v_0,\ldots,v_t=v'$, can be lifted from $\lk_{\widetilde{X}}(\widetilde{\Fc}) = \lk_{\lk_{\widetilde{X}}(\Fa)}(\Fc)$ back to $\lk_{\widetilde{X}}(\Fa)$, yielding a sequence of  $(d-j-1)$-multicells from $\Fb$ to $\Fb'$, such that each pair of consecutive $(d-j-1)$-multicells along the sequence share a common $(i+1)$-multicell, thus completing the proof of the induction step.
	
	$(2)\Rightarrow (1)$. Let $\Fa\in X_\sm$, and let $v,v'$ be two $0$-cells in $\lk_{\widetilde{X}}(\Fa)$. Since $\widetilde{X}$ is pure, one can find two $(d-j-1)$-multicells $\Fb,\Fb'$ in $\lk_{\widetilde{X}}(\Fa)$ such that $v$ is contained in $\Fb$ and $v'$ is contained in $\Fb'$. Using $(2)$, one can find $l\in\BN\cup\{0\}$ and a sequence $\Fb=\Fb_0,\Fb_1,\ldots,\Fb_l=\Fb'$ of $(d-j-1)$-multicells in $\lk_{\widetilde{X}}(\Fa)$ such that $\si_i:=\iota(\Fb_i)\cap\iota(\Fb_{i-1})\in X^{d-j-2}$ and $\sg(\Fb_i,\si_i)=\sg(\Fb_{i-1},\si_i)$ for every $1\leq i\leq l$. Defining $v_0=v$, $v_l=v'$, and $v_i$ for $1\leq i\leq l-1$ to be any vertex in $\si_i$, we obtain a sequence of vertices, such that each consecutive pair of vertices belong to some common multiedge. Hence $\lk_{\widetilde{X}}(\Fa)$ is connected. 
\end{proof}

% **************************************************************

\subsection{Colorable multicomplexes} \label{subsec:colorable_multicomplexes}

Next, we turn to discuss the notion of coloring of a $d$-multicomplex.

\begin{definition}[Colorable multicomplexes]
	A $d$-complex $X$ is called colorable if there exists a coloring of its $0$-cells by $(d+1)$ colors, $\gamma:X^0\to \ls d\rs$, such that the $0$-cells contained in any $d$-cell have distinct colors. A $d$-multicomplex $\widetilde{X}=(X,\sm,\sg)$ is said to be \emph{colorable} if the associated $d$-complex $X$ is colorable. 
\end{definition}

If $\widetilde{X}$ is a colorable $d$-multicomplex and $\gamma:X^0\to \ls d\rs$ is a coloring, one can extend the coloring to all the cells and multicells of $\widetilde{X}$ as follows. Let $1\leq j\leq d$. We color the $j$-cells of $\widetilde{X}$ using $\binom{d+1}{j+1}$ colors by defining the color of $\rho \in X^j$  to be $\{\gamma(v) ~:~ v\in \rho\}$ and coloring the $j$-multicells by $\gamma(\Fa) = \gamma(\iota(\Fa))$. Note that this is well defined, i.e., all $j$-cells are colored by exactly $(j+1)$ colors from $\ls d\rs$, since the multicomplex is pure. With a slight abuse of notation, we use $\gamma$ to denote the coloring of all cells and multicells of $X$ and $\widetilde{X}$. Since $0$-cells of any $d$-cell are colored with distinct colors, for every $\tau\in X^d$, the map $\gamma$ induces a bijection between $\{\rho\in X ~:~ \rho\subseteq \tau\}$ and subsets of $\ls d\rs$. Similarly, for every $\Fa\in X_\sm^d$, the map $\gamma$ induces a bijection between $\{\Fb\in X_\sm ~:~ \Fb\preceq \Fa\}$ and subsets of $\ls d\rs$.

% **************************************************************

\subsection{Ordering of a multicomplex}

Recall that $C_k$ is the cyclic group of order $k$, and for a set $B$ denote by $\CS_B$ the permutation group of $B$.

\begin{definition}
	Let $k,d\geq 1$, and assume that $\widetilde{X}=(X,\sm,\sg)$ is a $d$-multicomplex all of whose $(d-1)$-multicells have degree at most $k$. We call $\om=(\om_{\Fb})_{\Fb\in X^{d-1}_\sm}$ a \emph{$k$-ordering}, if for every $\Fb\in X^{d-1}_\sm$, the element $\om_{\Fb}:C_k\to \CS_{\delta_{\widetilde{X}}(\Fb)}$ is a transitive homomorphism. In other words, for every $\Fb\in X_\sm^{d-1}$, the map $\om_\Fb$ is a group homomorphism satisfying the property: for every $\Fa,\Fa'\in \delta(\Fb)$, there exists $\beta\in C_k$ such that $\om_{\Fb}(\beta).\Fa=\Fa'$, where $\om_\Fb(\beta).\Fa$ is the action of the permutation $\om_{\Fb}(\beta)$ on the $d$-multicell $\Fa$. 
\end{definition}

\begin{remark}$~$\vspace{-3pt}
\begin{enumerate}[(1)]
	\item	One can also consider the case $k=\infty$, constructed in the same way with the group $\BZ$ instead of $C_k$. In this case one obtains a family of multicomplexes without any restriction on the degrees of the $(d-1)$-multicells. In fact, all the results that follow can be generalized to the case $k=\infty$ by making the appropriate changes in the definitions.
	\item For a colorable $d$-multicomplex with coloring $\gamma$, one can also consider the more general situation in which the $(d-1)$-multicells of the same color are $k$-ordered where $k$ depends on the color. As before, our discussion can be extended to cover this case as well by suitable changes in the definitions. 
	\item We choose to work with orderings that are based on the cyclic group for  convenience. In fact, one can set $G$ to be any group of order $k$ and work with orderings of the form $\om=(\om_\Fb)_{\Fb\in X_\sm^{d-1}}$, where $\om_{\Fb}: G \to \CS_{\delta_{\widetilde{X}}(\Fb)}$ is a transitive homomorphism. 
\end{enumerate}
\end{remark}

\begin{claim}\label{clm:degree_divides_k}
	If $\widetilde{X}=(X,\sm,\sg)$ is a $d$-multicomplex and $\om$ is a $k$-ordering of $\widetilde{X}$, then $\deg_{\widetilde{X}}(\Fb)$ divides $k$ for every $\Fb\in X_m^{d-1}$. 
\end{claim}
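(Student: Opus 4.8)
The plan is to recognize that a $k$-ordering at $\Fb$ is nothing more than a transitive action of the finite group $C_k$ on the finite set $\delta_{\widetilde{X}}(\Fb)$, and then invoke the orbit–stabilizer theorem. Concretely, the homomorphism $\om_\Fb : C_k \to \CS_{\delta_{\widetilde{X}}(\Fb)}$ endows $\delta_{\widetilde{X}}(\Fb)$ with an action of $C_k$ via $\beta.\Fa := \om_\Fb(\beta).\Fa$; the fact that $\om_\Fb$ is a homomorphism into the symmetric group guarantees that this is an honest (left) group action, which is the only thing one needs to verify, and it is immediate from the definition.

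Next I would use transitivity. Since $\om_\Fb$ is a transitive homomorphism, the action of $C_k$ on $\delta_{\widetilde{X}}(\Fb)$ has a single orbit, namely all of $\delta_{\widetilde{X}}(\Fb)$. Fixing any $\Fa \in \delta_{\widetilde{X}}(\Fb)$ and letting $H = \mathrm{Stab}_{C_k}(\Fa) = \{\beta \in C_k : \om_\Fb(\beta).\Fa = \Fa\}$ be its stabilizer (a subgroup of $C_k$), the orbit–stabilizer theorem gives a bijection between $\delta_{\widetilde{X}}(\Fb)$ and the coset space $C_k / H$, so that
\begin{equation}
	\deg_{\widetilde{X}}(\Fb) = |\delta_{\widetilde{X}}(\Fb)| = [C_k : H].
\end{equation}

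Finally I would apply Lagrange's theorem: the index $[C_k : H]$ of a subgroup divides the order of the group, and $|C_k| = k$, whence $\deg_{\widetilde{X}}(\Fb) = [C_k : H]$ divides $k$. Since $\Fb \in X_\sm^{d-1}$ was arbitrary, this proves the claim. I do not expect any real obstacle here: the only content is the elementary orbit–stabilizer argument, and the sole point requiring a moment's care is confirming that $\om_\Fb$ being a transitive homomorphism (rather than merely a set map) is exactly what makes the orbit a single full orbit on which Lagrange applies.
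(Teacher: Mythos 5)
Your argument is correct and is essentially identical to the paper's own proof, which likewise observes that the transitive homomorphism $\om_\Fb$ makes $\delta_{\widetilde{X}}(\Fb)$ a single $C_k$-orbit and concludes $\deg_{\widetilde{X}}(\Fb)=[C_k:N]$ for the stabilizer $N$ of a fixed $d$-multicell, hence a divisor of $k$. Your write-up simply spells out the orbit--stabilizer and Lagrange steps more explicitly.
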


\begin{proof}
	Fix $\Fb\in X_\sm^{d-1}$. Since $\om_\Fb:C_k \to \CS_{\delta(\Fb)}$ is a transitive homomorphism, it follows that $\deg_{\widetilde{X}}(\Fb) = [C_k : N]$, where $N\leq C_k$ is the stabilizer subgroup of a fixed $d$-multicell containing $\Fb$. Hence, $\deg_{\widetilde{X}}(\Fb)$ divides $k$. 
\end{proof}

Throughout the remaining of this paper, whenever a confusion may not occur, we refer to a $k$-ordering simply as an ordering, in which case the appropriate $k$ should be clear from the context. 

% **************************************************************

\subsection{The category $\scrC_{d,k}$.} 
	We define $\scrC_{d,k}$ to be the category of quartets $(\widetilde{X},\gamma,\om,\Fa_0)$ where,
	\begin{itemize}
		\item $\widetilde{X}=(X,\sm,\sg)$ is a colorable, pure $d$-dimensional multicomplex which is $d$-lower path connected satisfying $\max_{\Fb\in X^{d-1}_\sm}\deg_{\widetilde{X}}(\Fb)\leq k$.
		\item $\gamma$ is a coloring of $X$.
		\item $\om$ is a $k$-ordering of $\widetilde{X}$.
		\item $\Fa_0\in X_m^d$ is the root of the multicomplex, i.e., a fixed $d$-multicell.
	\end{itemize}	  
	
Given a pair of objects $(\widetilde{X},\gamma,\om,\Fa_0)$ and $(\widetilde{Y},\widehat{\gamma},\widehat{\om},\widehat{\Fa}_0)$ in $\scrC_{d,k}$, we say that $\widetilde{\varphi}$ is a morphism from $(\widetilde{X},\gamma,\om,\Fa_0)$ to $(\widetilde{Y},\widehat{\gamma},\widehat{\om},\widehat{\Fa}_0)$ if $\widetilde{\varphi} :\widetilde{X}\to \widetilde{Y}$ is a simplicial multimap which preserves the root, coloring and ordering, namely:
\begin{itemize}
	\item $\widetilde{\varphi}(\Fa_0)=\widehat{\Fa}_0$,
	\item $\widehat{\gamma}\circ \widetilde{\varphi} = \gamma$
	\item $\widetilde{\varphi}(\om_{\Fb}(\beta).\Fa) = \widehat{\om}_{\widetilde{\varphi}(\Fb)}(\beta).\widetilde{\varphi}(\Fa)$ for every $\Fb\in X_m^{d-1}$, $\Fa\in \delta_{\widetilde{X}}(\Fb)$ and $\beta\in C_k$.
\end{itemize}

For future use we denote by $\scrC_{d,k}^{lc}$ the set of objects $(\widetilde{X},\gamma,\om,\Fa_0)\in \scrC_{d,k}$ such that $\widetilde{X}$ is link connected. For example, the complex on the left in Figure \ref{fig:not_link_connected} (together with a choice of coloring, ordering and a root) belongs to $\scrC_{2,2}^{lc}$, while the complex on the right in Figure \ref{fig:not_link_connected} belongs to $\scrC_{2,2}\setminus \scrC_{2,2}^{lc}$. 

% **********************************************************************************
% **********************************************************************************

\section{Group action on elements of $\scrC_{d,k}$}\label{sec:group_action}

In this section we describe a left action of a specific group, denoted $G_{d,k}$, on the $d$-multicells of an object in the category $\scrC_{d,k}$.

\subsection{The group $G_{d,k}$.} 
For natural numbers $d\geq 1$ and $k\geq 1$, recall that $C_k$ is the cyclic group of order $k$, and define 
\begin{equation}
	G_{d,k} = \overset{d}{\underset{i=0}{\Asterisk}}C_k = \Big\langle\alpha_0,\ldots,\alpha_d\, \Big|\, \alpha_i^k=e \text{ for }i=0,\,\ldots d\Big\rangle
\end{equation}
to be the free product of $(d+1)$ copies of $C_k$. 

Every element of $G_{d,k}$ can be written as a word of the form $\alpha_{j_m}^{l_m} \ldots \alpha_{j_2}^{l_2} \alpha_{j_1}^{l_1}$ for some $m\geq 0$, $l_1,\ldots,l_m\in \BZ$ and $j_1,\ldots,j_m\in \ls d\rs$. The length of the word $\alpha_{j_m}^{l_m} \ldots \alpha_{j_2}^{l_2} \alpha_{j_1}^{l_1}$ is defined to be $m$. We say that $\alpha_{j_m}^{l_m} \ldots \alpha_{j_2}^{l_2} \alpha_{j_1}^{l_1}$ is \emph{reduced} if  $l_1,\ldots,l_m\in \ls k-1\rs \setminus \{0\}$ and $j_i \neq j_{i+1}$ for every $1\leq i\leq m-1$. It is well known that every element of $G_{d,k}$ is represented by a unique reduced word. In particular, the identity is represented by the empty word. 

Any word $\alpha_{j_m}^{l_m} \ldots \alpha_{j_2}^{l_2} \alpha_{j_1}^{l_1}$, representing an element $g\in G$, can be transformed into any other word (and in particular into the reduced one) representing the same element $g$ using the following transformations. 
\begin{equation}\label{eq:basics_steps}
\begin{aligned}
	&(a) \text{ Replacing } l_i \text{ by } l'_i \text{ for some }1\leq i\leq m \text{ and } l'_i \text{ such that } l_i=l'_i \text{ mod }k.\\
	&(b) \text{ If } j_i = j_{i+1} \text{ for some } 1\leq i\leq m, \text{ replacing }  \alpha_{j_{i+1}}^{l_{i+1}}\alpha_{j_i}^{l_i} \text{ by } \alpha_{j_i}^{l_i+l_{i+1}}.\\
	&(c) \text{ Replacing } \alpha_{j_i}^{l_i} \text{ by } \alpha_{j'_i}^{l'_i}\alpha_{j''_i}^{l''_i} \text{ with } j'_i=j''_i=j_i  \text{ and } l'_i + l''_i = l_i. \\
	&(d) \text{ Deleting }\alpha_i^{l_i} \text{ if }l_i=0 \text{ mod }k, \text{ or adding }\alpha_{i_*}^{0} \text{ between }\alpha_i^{l_i} \text{ and }\alpha_{i+1}^{l_{i+1}}.
\end{aligned}
\end{equation}

% ************************************************

\subsection{The left action of $G_{d,k}$ on $d$-multicells}

Let $(\widetilde{X},\gamma,\om,\Fa_0)\in\scrC_{d,k}$ with $\widetilde{X}=(X,\sm,\sg)$. We define an action of $G_{d,k}$ on the $d$-multicells of $\widetilde{X}$ using the coloring $\gamma$ and the ordering $\om$.

For $\Fa\in X_\sm^d$, $i\in \ls d\rs$ and $l\in\BZ$, define $\alpha_i^l.\Fa$ as follows. Let $\Fb\in X_\sm^{d-1}$ be the unique $(d-1)$-multicell in $\partial^m \Fa$ of color $\ls d\rs\setminus \{i\}$. Then, set $\alpha_i^l.\Fa = \om_{\Fb}(\alpha_i^l).\Fa$. Next, given any $g\in G$ such that $g=\alpha_{i_m}^{l_m}\ldots\alpha_{i_2}^{l_2}\alpha_{i_1}^{l_1}$ for some $m\geq 0$, $i_1,\ldots,i_m\in \ls d\rs$ and $l_1,\ldots,l_m\in\BZ$, set $g.\Fa = \alpha_{i_m}^{l_m}.(\ldots (\alpha_{i_2}^{l_2}.(\alpha_{i_1}^{l_1}.\Fa))\ldots)$.
	
	First, we show that this mapping is well-defined, i.e., that any pair of words representing the same group element $g$ acts on the $d$-multicells in the same way.
	Once this is shown, we immediately conclude that the mapping defines an action of $G_{d,k}$ on $X_\sm^d$. Let $g=\alpha_{i_m}^{l_m}\ldots\alpha_{i_2}^{l_2}\alpha_{i_1}^{l_1}$ and $\Fa\in X_\sm^d$. Since we can move from any word representing $g$ to any other via the elementary steps described in \eqref{eq:basics_steps}, it is enough to show that any elementary step does not change the value of $g.\Fa$. When applying an elementary change of type $(a)$ we have $\alpha_{j_i}^{l_i}=\alpha_{j_i}^{l'_i}$ and in particular $\om_{\Fb}(\alpha_{j_i}^{l_i})=\om_{\Fb}(\alpha_{j_i}^{l'_i})$ for every $\Fb\in X_\sm^{d-1}$. Since the permutations are the same, so is the action they induce.  As for an elementary change of type $(b)$ or $(c)$, replacing $\alpha_{j_{i+1}}^{l_{i+1}}\alpha_{j_i}^{l_i}$ with $j_i=j_{i+1}$ by $\alpha_{j_i}^{l_i+l_{i+1}}$ or vice versa yields the same action as we now explain. Let $\Fa\in X_\sm^d$, and let $\Fb\in \partial^m\Fa$ be the $(d-1)$-multicell of color $\ls d\rs \setminus \{j_i\}$ in $\Fa$'s boundary. On the one hand, the action of $\alpha_{j_i}^{l_i+l_{i+1}}$ gives the $d$-multicell $\alpha_{j_i}^{l_i+l_{i+1}}.\Fa = \om_{\Fb}(\alpha_{j_i}^{l_i+l_{i+1}}).\Fa$. On the other hand, since $\alpha_{j_i}^{l_i}.\Fa = \om_{\Fb}(\alpha_{j_i}^{l_i}).\Fa$, it follows that the unique $(d-1)$-multicell contained in $\alpha_{j_i}^{l_i}.\Fa$ of color $\ls d\rs\setminus \{j_i\}$ is $\Fb$. Consequently $\alpha_{j_i}^{l_{i+1}}.(\alpha_{j_i}^{l_i}.\Fa) = \om_{\Fb}(\alpha_{j_i}^{l_{i+1}}).( \om_{\Fb}(\alpha_{j_i}^{l_{i}}).\Fa)$, and hence the resulting $d$-multicell is $\alpha_{j_i}^{l_{i+1}+l_i}.\Fa=\om_\Fb(\alpha_{j_i}^{l_i+l_{i+1}}).\Fa$, using the fact that $\om_{\Fb}$ is a homomorphism from $C_k$ to $\CS_{\delta(\Fb)}$. Finally, an elementary change of type $(d)$ does not change the resulting cell as the action defined by $\alpha_i^0$ is trivial. 

\begin{claim}\label{clm:action_of_G_d_k_from_the_left_is_transitive}
	For every  $(\widetilde{X},\gamma,\om,\Fa_0)\in\scrC_{d,k}$, the action of $G_{d,k}$ on its $d$-multicells is transitive.
\end{claim}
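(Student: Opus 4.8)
The plan is to show transitivity by proving that every $d$-multicell $\Fa \in X_\sm^d$ lies in the same $G_{d,k}$-orbit as the root $\Fa_0$. Since $\widetilde{X}$ is assumed to be $d$-lower path connected, there exists a path $\Fa_0 = \tau(0), \tau(1), \ldots, \tau(m) = \Fa$ of $d$-multicells in which each consecutive pair $\tau(r-1), \tau(r)$ shares a common $(d-1)$-multicell. By induction on $r$, it suffices to handle a single step: given two neighboring $d$-multicells $\Fa'$ and $\Fa''$ sharing a $(d-1)$-multicell $\Fb$, I want to produce a group element $g \in G_{d,k}$ with $g.\Fa' = \Fa''$. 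The natural candidate is a power of a single generator $\alpha_i$, where $i \in \ls d\rs$ is the color missing from $\Fb$, i.e.\ $\gamma(\Fb) = \ls d\rs \setminus \{i\}$.

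First I would verify that such an $i$ is well-defined: since $\widetilde{X}$ is colorable and pure, every $(d-1)$-multicell has a color set which is a size-$d$ subset of $\ls d\rs$, hence its complement is a single color $i$. Both $\Fa'$ and $\Fa''$ contain $\Fb$ in their multiboundary, so both lie in $\delta_{\widetilde{X}}(\Fb)$. Now the key input is that $\om_{\Fb} : C_k \to \CS_{\delta_{\widetilde{X}}(\Fb)}$ is a \emph{transitive} homomorphism, part of the definition of a $k$-ordering. Transitivity gives $\beta \in C_k$ with $\om_{\Fb}(\beta).\Fa' = \Fa''$; writing $\beta = \alpha_i^l$ and unwinding the definition of the left action, $\alpha_i^l.\Fa' = \om_{\Fb}(\alpha_i^l).\Fa' = \Fa''$, since $\Fb$ is precisely the $(d-1)$-multicell of color $\ls d\rs \setminus \{i\}$ in $\partial^m \Fa'$. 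This closes the single-step claim, and composing the elements obtained along the path yields $g = g_m \cdots g_1$ with $g.\Fa_0 = \Fa$.

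The main point to be careful about is the bookkeeping in the inductive step: after applying $g_1, \ldots, g_{r-1}$ to reach $\tau(r-1)$, I must ensure that the generator used to pass from $\tau(r-1)$ to $\tau(r)$ is chosen relative to $\tau(r-1)$'s boundary, so that the definition $\alpha_i^l.\Fa = \om_\Fb(\alpha_i^l).\Fa$ applies with the correct $\Fb$. Because the action is already shown to be well-defined (established in the paragraph preceding this claim), there is no ambiguity in composing these elements, and associativity of the action lets me accumulate them into one word. I do not expect a genuine obstacle here; the only subtlety is purely notational, namely matching each step's missing color to the appropriate $(d-1)$-multicell in the current multicell's multiboundary. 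The essential content is simply that local transitivity of each $\om_\Fb$ plus global $d$-lower path connectivity upgrades to transitivity of the full $G_{d,k}$-action.
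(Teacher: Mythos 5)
Your proposal is correct and follows essentially the same route as the paper: use $d$-lower path connectivity to produce a chain of neighboring $d$-multicells, apply transitivity of $\om_{\Fb}$ at each shared $(d-1)$-multicell $\Fb$ to realize each step as the action of a single generator power $\alpha_i^{l}$ (with $i$ the color missing from $\gamma(\Fb)$), and compose. The paper phrases this for an arbitrary pair $\Fa,\Fa'$ rather than anchoring at the root, but that is an immaterial difference.
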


\begin{proof}
	Let $(\widetilde{X},\gamma,\om,\Fa_0)\in\scrC_{d,k}$ with $\widetilde{X}=(X,\sm,\sg)$ and let $\Fa,\Fa'\in X_\sm^d$. Since $\widetilde{X}$ is $d$-lower path connected, there exists a sequence of $d$-multicells $\Fa=\Fa(0),\Fa(1),\ldots,\Fa(m)=\Fa'$ such that $\Fb_j:=\sg(\Fa(j),\si(j))=\sg(\Fa(j-1),\si(j))$ for every $1\leq j\leq m$. Define $i_j\in \ls d\rs$ to be the unique color such that $\gamma(\Fb_j) = \widehat{i_j}$ for $1\leq j\leq m$. Furthermore, let $l_j\in \ls k-1\rs$ be the unique number such that $\om_{\Fb_j}(\alpha_{i_j}^{l_j}).\Fa(j-1)=\Fa(j)$ (which must exist since $\om_{\Fb_j}$ is transitive). It now follows from the definition of the action that $\Fa' = \alpha_{i_{m-1}}^{l_{m-1}}\ldots\alpha_{i_2}^{l_2}\alpha_{i_1}^{l_1}.\Fa$, which shows that the action is transitive. 
\end{proof}

\begin{remark}
	Note that the action of $G_{d,k}$ is only defined on the $d$-multicells, and is not a simplicial action. In particular, there is no action on the $0$-cells and it does not preserve the neighboring relation of multicells.

\end{remark}

% **********************************************************************************
% **********************************************************************************

\section{The universal element of $\scrC_{d,k}$}\label{sec:the_universal_element}

We say that $(\widetilde{X},\gamma,\om,\Fa_0)\in \scrC_{d,k}$ satisfies the universal property (of $\scrC_{d,k}$) if for any $(\widetilde{Y},\widehat{\gamma},\widehat{\om},\widehat{\Fa}_0)\in \scrC_{d,k}$ there exists a unique morphism $\varphi:(\widetilde{X},\gamma,\om,\Fa_0)\to(\widetilde{Y},\widehat{\gamma},\widehat{\om},\widehat{\Fa}_0)$.

It follows from the definition of the universal object that, if it exists, it is necessarily unique up to a bijective morphism. Indeed, assume that $(\widetilde{X},\gamma,\om,\Fa_0)\in \scrC_{d,k}$ and $(\widetilde{Y},\widehat{\gamma},\widehat{\om},\widehat{\Fa}_0)\in\scrC_{d,k}$ satisfy the universal property. Then there exist unique morphisms $\varphi:(\widetilde{X},\gamma,\om,\Fa_0)\to (\widetilde{Y},\widehat{\gamma},\widehat{\om},\widehat{\Fa}_0)$ and $\psi:(\widetilde{Y},\widehat{\gamma},\widehat{\om},\widehat{\Fa}_0)\to (\widetilde{X},\gamma,\om,\Fa_0)$. Consequently $\psi\circ\varphi:(\widetilde{X},\gamma,\om,\Fa_0)\to (\widetilde{X},\gamma,\om,\Fa_0)$ and $\varphi\circ \psi:(\widetilde{Y},\widehat{\gamma},\widehat{\om},\widehat{\Fa}_0)\to (\widetilde{Y},\widehat{\gamma},\widehat{\om},\widehat{\Fa}_0)$ are morphisms from $(\widetilde{X},\gamma,\om,\Fa_0)$ and $(\widetilde{Y},\widehat{\gamma},\widehat{\om},\widehat{\Fa}_0)$ to themselves respectively. Since such morphisms are unique by the universal property, and since the identity maps from $\widetilde{X}$ and $\widetilde{Y}$ to themselves are morphisms as well, it follows that $\psi\circ \phi=\textrm{id}_{\widetilde{X}}$ and $\phi\circ\psi = \textrm{id}_{\widetilde{Y}}$. In particular, we obtain that $\psi$ and $\phi$ are morphisms which are also bijections between the $0$-cells of $\widetilde{X}$ and $\widetilde{Y}$, which proves that $(\widetilde{X},\gamma,\om,\Fa_0)$ and $(\widetilde{Y},\widehat{\gamma},\widehat{\om},\widehat{\Fa}_0)$ are isomorphic.

%**********************************************************************************

\subsection{Arboreal complexes}\label{subsec:arboreal_complexes}

Our next goal is to give an explicit construction for the universal element (which in particular proves its existence). We start by recalling the definition of arboreal complexes from \cite{PR12}. 

\begin{definition}[Arboreal complexes \cite{PR12}]\label{def:arboreal_complex}	
We say that a $d$-complex is \emph{arboreal }if it is obtained by the following procedure:  Start with a $d$-cell $\CT$, and attach to each of its $(d-1)$-cells new $d$-cells, using a new vertex for each of the new $d$-cells. Continue by induction, adding new $d$-cells to each of the $(d-1)$-cells which were added in the last step, using a new vertex for each of the new $d$-cells. As is the case for graphs (the case $d=1$), for every $d,k\geq 1$,  this defines a unique \emph{$k$-regular $d$-dimensional arboreal complex}, denoted $T_{d,k}$   (see Figure \ref{fig:universal_arboreal_complex} for an illustration). 
\end{definition}

\begin{remark}
	Note that $T_{1,k}$ is the $k$-regular tree. But, the case $d\geq 2$ brings a new phenomenon, although the degree of each $(d-1)$-cell in $T_{d,k}$ is $k$ by definition, the degree of each $j$-cell for $j<d-1$ is infinite. 
\end{remark}

\begin{figure}[h]
	\begin{center}
	\includegraphics[scale=1.6]{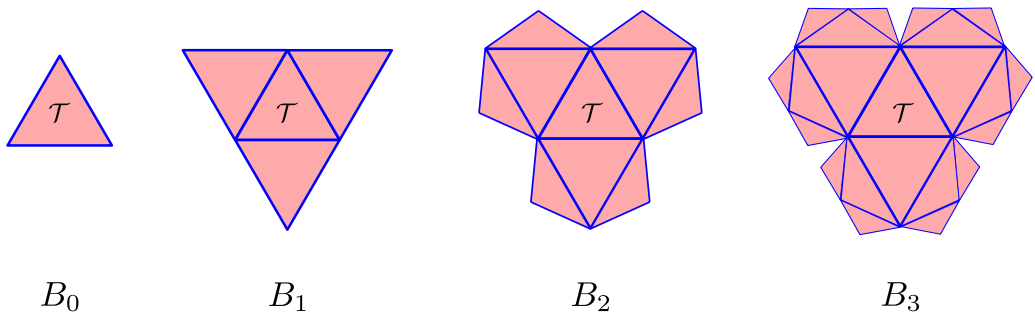}
	\caption{The first four	steps of the construction of the universal arboreal complex $T_{2,2}$. \label{fig:universal_arboreal_complex}}
	\end{center}
\end{figure}

The complexes $T_{d,k}$ were studied in \cite{PR12,Ro14}, and in particular the spectrum and spectral measure of their Laplacians were calculated. One can verify that the complex $T_{d,k}$ is transitive at all levels, that is, for any $0\leq j\leq d$ and any pair of $j$-cells $\rho,\rho'$, there is a simplicial automorphism of $T_{d,k}$ taking $\rho$ to $\rho'$. 

For $n\geq 0$, we denote by $B_n$ the ball of radius $n$ around $\CT$, that is, the subcomplex of $T_{d,k}$ containing all $d$-cells (and $j$-cells contained in them) which are attached to $T_{d,k}$ in the first $(n+1)$ steps of the construction (see Figure \ref{fig:universal_arboreal_complex}). 

The construction of $T_{d,k}$ allows us to introduce a coloring of its $0$-cells using $(d+1)$ colors (see Figure \ref{fig:universal_arboreal_complex_coloring}). Indeed, denote by $(\CT_i)_{i=0}^d$ the $0$-cells of $\CT$ and color them using a different color for each $0$-cell. Next, assume that the $d$-cells in $B_n$ were colored in such a way that the colors of the $0$-cells in each $d$-cell are distinct. If $\tau\in B_{n+1}\setminus B_n$ is a $d$-cell that contains the $(d-1)$-cell $\si\in B_n$, then by induction we know that the colors of the $0$-cells of $\si$ are distinct, and thus, there exists a unique color $j\in \ls d\rs$ which is not used to color the $0$-cells of $\si$. Define the color of $\tau\setminus \si$ to be $j$. Since the uncolored $0$-cells of the $d$-cells which are added in the $(n+1)$-step are distinct this is a well defined coloring, and, by construction, the colors of the $0$-cells of each of the $d$-cells in $B_{n+1}$ are distinct. Continuing by induction, we obtain a coloring of $T_{d,k}$ and in particular that $T_{d,k}$ is colorable. For future use, we fix a coloring of the $0$-cells $\Gamma:T_{d,k}^0\to \ls d\rs$, which, as explained in Subsection \ref{subsec:colorable_multicomplexes}, can also be used to color all the cells in $T_{d,k}$. Note that the coloring $\Gamma$ is completely determined by its values on the $0$-cells $(\CT_i)_{i=0}^d$. 

\begin{figure}[h]
	\begin{center}
	\includegraphics[scale=1.6]{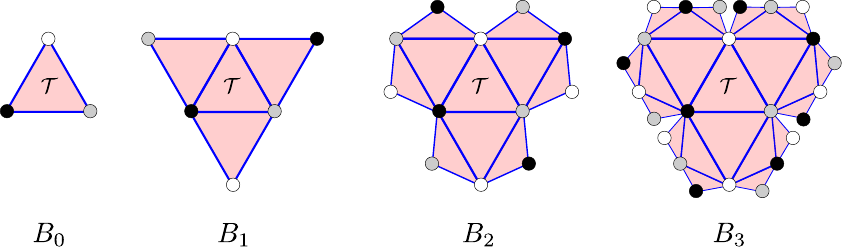}
	\caption{The first four steps of the construction of a coloring for the universal arboreal complex $T_{2,2}$. The colors black, white and gray are used instead of $0,1,2$.\label{fig:universal_arboreal_complex_coloring}}
	\end{center}
\end{figure}

	One may wish to characterize the complex $T_{d,k}$ as the unique $k$-regular $d$-complex in which for any pair of $d$-cells $\tau,\tau'$ in $T_{d,k}$ there exists a unique non-backtracking path from $\tau$ to $\tau'$. However, this does not determine $T_{d,k}$ uniquely, as any complex obtained from $T_{d,k}$ by identification of lower dimensional cells of the same color yields another complex with the above property. Nevertheless, using the results of this paper, we can prove the following uniqueness result:

\begin{proposition}\label{prop:uniqueness_of_T_d_k}	
	 $T_{d,k}$ is the unique link-connected, $k$-regular $d$-complex with the property that for any pair of $d$-cells $\tau,\tau'$ in $T_{d,k}$ there exists a unique non-backtracking path from $\tau$ to $\tau'$.
\end{proposition}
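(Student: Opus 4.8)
The plan is to prove uniqueness by identifying both $T_{d,k}$ and any candidate complex $X$ with the same universal object in the category $\scrC_{d,k}$. First I would observe that the hypotheses on $X$—being $k$-regular, $d$-dimensional, and having a unique non-backtracking path between any pair of $d$-cells—translate into the language of the category. The unique-path condition makes the line graph $\scrG(X)$ a tree-like object in a precise sense, and I would argue that it forces $X$ to be $d$-lower path connected (any two $d$-cells are joined by the non-backtracking path) and colorable. Colorability should follow from the tree structure: fix a coloring on a root $d$-cell and propagate it along the unique non-backtracking paths exactly as in the inductive coloring of $T_{d,k}$ described in the excerpt; the absence of cycles in the line graph guarantees there is no obstruction to consistency. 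One then equips $X$ with a $k$-ordering $\om$ and a root $\Fa_0$, so that $(X,\gamma,\om,\Fa_0)\in\scrC_{d,k}^{lc}$, using the given link-connectedness.

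The key step is then to show that every object in $\scrC_{d,k}^{lc}$ satisfying the unique non-backtracking path property is the universal element. I would invoke the characterization promised earlier in the paper: $T_{d,k}$ is the universal element of $\scrC_{d,k}$ (Proposition \ref{prop:universal_property}), and the correspondence between $\scrC_{d,k}^{lc}$ and subgroups of $G_{d,k}$ identifies $T_{d,k}$ with the trivial subgroup. The plan is to show that the unique-path hypothesis is equivalent to the associated subgroup $H\leq G_{d,k}$ being trivial. Concretely, a nontrivial element $h\in H$ would produce, via the left action of $G_{d,k}$ on $d$-multicells (Claim \ref{clm:action_of_G_d_k_from_the_left_is_transitive}), a nonempty reduced word fixing the root $d$-cell; such a word corresponds to a closed non-backtracking walk in the line graph that is not the trivial path, contradicting uniqueness of non-backtracking paths between $\Fa_0$ and itself. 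Conversely, triviality of $H$ yields the arboreal (tree-like) structure, hence the unique-path property.

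The main obstacle I expect is making precise the dictionary between non-backtracking paths in the line graph $\scrG(X)$ and reduced words in $G_{d,k}$, and showing that ``backtracking'' on the complex side corresponds exactly to cancellation of adjacent syllables $\alpha_i^{l}\alpha_i^{l'}$ on the group side. One must verify that crossing a $(d-1)$-cell of color $\ls d\rs\setminus\{i\}$ and immediately crossing back corresponds to multiplying by $\alpha_i^{l}$ and then $\alpha_i^{-l}$, i.e.\ to a non-reduced word, while genuinely distinct non-backtracking steps correspond to syllables with $j_i\neq j_{i+1}$. This is where link-connectedness is essential: without it, the correspondence between the group action on $d$-multicells and paths in the line graph can fail to be faithful, which is precisely the phenomenon illustrated by the non-link-connected examples in the excerpt (identification of lower-dimensional cells of the same color).

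Finally, I would assemble the argument: since $X$ is link-connected, $k$-regular, $d$-dimensional, and has trivial associated subgroup, the correspondence of Section \ref{sec:from_simplicial_complexes_to_subgroups_and_back} gives an isomorphism $X\cong T_{d,k}$ as objects of $\scrC_{d,k}^{lc}$. This simultaneously establishes existence (the explicitly constructed $T_{d,k}$ has the stated properties, which can be read off directly from the arboreal construction) and uniqueness (any other such complex has the same trivial subgroup, hence is isomorphic to $T_{d,k}$). I would close by remarking that the link-connectedness hypothesis cannot be dropped, exactly because of the cell-identification examples noted before the proposition, which preserve the line graph and the unique-path property but destroy link-connectedness.
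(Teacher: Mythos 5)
Your proposal is correct and follows essentially the same route as the paper: the paper also deduces this proposition from the classification theorem (Theorem \ref{thm:main_theorem}), observing that the hypotheses determine the line graph of $X$ completely and hence that $X$ must coincide with the unique link-connected object whose line graph is that of $T_{d,k}$, i.e.\ the quotient by the trivial subgroup. Your version merely makes explicit the dictionary (reduced words $\leftrightarrow$ non-backtracking paths, nontrivial stabilizer element $\leftrightarrow$ nontrivial closed non-backtracking walk) that the paper's two-sentence argument leaves implicit.
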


The proof of Proposition \ref{prop:uniqueness_of_T_d_k} is postponed to Section \ref{sec:from_simplicial_complexes_to_subgroups_and_back}. Let $\Om=(\Om_\si)_{\si\in (T_{d,k})^{d-1}}$ be a fixed choice of $k$-ordering for $T_{d,k}$. 

\begin{proposition}[Universal property of $T_{d,k}$]\label{prop:universal_property}
	The quartet $(T_{d,k},\Gamma,\Om,\CT)\in \scrC_{d,k}$ satisfies the universal property. 
\end{proposition}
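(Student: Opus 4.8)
The plan is to construct the required morphism directly from the left $G_{d,k}$-action on $d$-cells, using that on $T_{d,k}$ this action is not merely transitive (as guaranteed in general by Claim~\ref{clm:action_of_G_d_k_from_the_left_is_transitive}) but in fact \emph{simply} transitive. As a preliminary step I would record that $(T_{d,k},\Gamma,\Om,\CT)$ really is an object of $\scrC_{d,k}$: by the arboreal construction of Definition~\ref{def:arboreal_complex} it is a pure, $k$-regular, $d$-lower path connected simplicial complex, all of whose multiplicities equal $1$; it is colorable via $\Gamma$; $\Om$ is a $k$-ordering; and $\CT$ is a root. Everything then reduces to producing a unique $\varphi$ into an arbitrary $(\widetilde Y,\widehat\gamma,\widehat\om,\widehat\Fa_0)\in\scrC_{d,k}$.

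The key step is the freeness of the action, i.e.\ that the stabilizer of $\CT$ in $G_{d,k}$ is trivial. Given nontrivial $g$ with reduced form $g=\alpha_{i_m}^{l_m}\cdots\alpha_{i_1}^{l_1}$ (so $m\geq 1$, each $l_r\in\ls k-1\rs\setminus\{0\}$ and $i_r\neq i_{r+1}$), I would follow the $d$-cells $\CT=\tau_0,\tau_1,\dots,\tau_m=g.\CT$ obtained from the partial products: by definition of the action, $\tau_r=\Om_{\Fb_r}(\alpha_{i_r}^{l_r}).\tau_{r-1}$, where $\Fb_r\subset\tau_{r-1}$ is the $(d-1)$-face of color $\ls d\rs\setminus\{i_r\}$. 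Since $\deg(\Fb_r)=k$ forces $\Om_{\Fb_r}$ to be the free transitive action of $C_k$, and $l_r\not\equiv 0$, each step is a genuine move $\tau_r\neq\tau_{r-1}$ across the face $\Fb_r$; and since $i_r\neq i_{r+1}$, the consecutive faces $\Fb_r,\Fb_{r+1}$ (both faces of $\tau_r$) carry distinct colors, so the sequence is a non-backtracking path in the line graph of $T_{d,k}$. The arboreal structure of $T_{d,k}$ — a tree of $d$-cells attached along codimension-one faces — forbids a nontrivial non-backtracking path from returning to its start, whence $g.\CT\neq\CT$. With Claim~\ref{clm:action_of_G_d_k_from_the_left_is_transitive} this gives simple transitivity.

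Now every $d$-cell of $T_{d,k}$ is uniquely $g.\CT$, and I set $\widetilde\varphi(g.\CT):=g.\widehat\Fa_0$; this is well defined precisely because the trivial stabilizer of $\CT$ sits inside any stabilizer, and it is $G_{d,k}$-equivariant by construction. To promote $\widetilde\varphi$ to a simplicial multimap I would use colorability: the faces of $g.\CT$ are indexed by subsets of $\ls d\rs$ via $\Gamma$ and those of $g.\widehat\Fa_0$ via $\widehat\gamma$, so I send the color-$c$ face of $g.\CT$ to the color-$c$ face of $g.\widehat\Fa_0$. The delicate point is well-definedness on a lower cell $\si$ lying in several $d$-cells. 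For $\si=\Fb$ of codimension one it is immediate: the $d$-cells containing $\Fb$ form a single $\Om_\Fb$-orbit, so any two differ by a rotation $\alpha_i^l$, whose image $\widehat\om_{\widetilde\varphi(\Fb)}(\alpha_i^l)$ fixes the color-$(\ls d\rs\setminus\{i\})$ face, leaving the image of $\Fb$ unchanged. For general $\si$ I would join any two $d$-cells containing it by a path inside $\lk_{T_{d,k}}(\si)$ whose consecutive members share a $(d-1)$-face containing $\si$ — such a path exists because $T_{d,k}$ is link-connected (evident from the arboreal construction, its links being themselves arboreal complexes), hence its links are lower path connected by Proposition~\ref{prop:equivalent_defn_of_lc} — and each step, a rotation about a face containing $\si$, preserves the image of $\si$.

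It remains to verify that $\widetilde\varphi$ is a morphism and is unique. Coloring is preserved by construction, the root maps to the root since $\CT=e.\CT\mapsto e.\widehat\Fa_0=\widehat\Fa_0$, and preservation of the ordering is exactly the equivariance $\widetilde\varphi(\alpha_i^l.\Fa)=\alpha_i^l.\widetilde\varphi(\Fa)$ read through the definition of the action, once color preservation identifies $\widetilde\varphi(\Fb)$ with the color-$(\ls d\rs\setminus\{i\})$ face of $\widetilde\varphi(\Fa)$. Uniqueness is the easy direction: any morphism fixes the root and is forced to be $G_{d,k}$-equivariant on $d$-multicells (it intertwines the two orderings, which define the actions), so by transitivity it agrees with $\widetilde\varphi$ on all $d$-cells, and then on every lower cell since a simplicial multimap preserves the gluing structure and every cell lies in a $d$-cell by purity. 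I expect the main obstacle to be the well-definedness of the extension to lower-dimensional cells: this is the step that genuinely requires both simple transitivity and link-connectedness, and it is exactly the phenomenon that fails for non-link-connected complexes (cf.\ the identification example and Figure~\ref{fig:not_link_connected}).
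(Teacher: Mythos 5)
Your proof is correct, but it takes a genuinely different route from the paper's. The paper proves Proposition \ref{prop:universal_property} by a direct induction on the balls $B_n$ of the arboreal construction: $B_0$ consists of the single $d$-cell $\CT$, which is forced onto $\widehat{\Fa}_0$, and every $d$-cell $\tau\in B_{n+1}\setminus B_n$ hangs off $B_n$ along a unique $(d-1)$-cell $\si$ contained in a unique $\tau'\in B_n^d$, so the requirement of preserving the ordering forces $\widetilde{\varphi}(\tau)=\widehat{\om}_{\widetilde{\varphi}(\si)}(\beta).\widetilde{\varphi}(\tau')$ for the unique $\beta$ with $\Om_\si(\beta).\tau'=\tau$; lower cells are then mapped by color. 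The group $G_{d,k}$ plays no role there: simple transitivity (Lemma \ref{lem:the_action_of_G_on_T_is_free}) and the identification of links (Proposition \ref{prop:links_in_T^d_k}) are only established afterwards. You instead front-load both facts --- your non-backtracking-path argument for freeness is essentially the paper's proof of Lemma \ref{lem:the_action_of_G_on_T_is_free}(1), and your appeal to connectedness of links is Proposition \ref{prop:links_in_T^d_k} --- and then define the morphism in one stroke by equivariance, $g.\CT\mapsto g.\widehat{\Fa}_0$. Since neither of those two proofs in the paper depends on Proposition \ref{prop:universal_property}, there is no circularity. What your route buys is a cleaner uniqueness argument (a morphism intertwines the orderings, hence is equivariant, hence determined on $d$-cells by transitivity) and it isolates exactly where link-connectedness of the source enters, namely in the well-definedness of the extension to lower-dimensional cells --- the same mechanism that later drives Theorem \ref{thm:main_theorem}(2). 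One point worth spelling out in your well-definedness step: the reason a rotation $\widehat{\om}_{\widetilde{\varphi}(\Fb)}(\alpha_i^l)$ fixes the image of $\si$ is that both $\widetilde{\varphi}(\tau)$ and its image contain the $(d-1)$-multicell $\widetilde{\varphi}(\Fb)$, and the unique color-$\Gamma(\si)$ multicell contained in any $d$-multicell containing $\widetilde{\varphi}(\Fb)$ is the color-$\Gamma(\si)$ multicell of $\widetilde{\varphi}(\Fb)$ itself; this uses the containment bijection for multicells, not merely the coloring of $0$-cells.
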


\begin{proof}
	Let $(\widetilde{Y},\widehat{\gamma},\widehat{\om},\widehat{\Fa}_0)\in \scrC_{d,k}$ with $\widetilde{Y}=(Y,\sm,\sg)$. Using the ball structure of $T_{d,k}$ we prove by induction that the existence of a unique morphism $\widetilde{\varphi}:(T_{d,k},\Gamma,\Om,\CT)\to (\widetilde{Y},\widehat{\gamma},\widehat{\om},\widehat{\Fa}_0)$. More precisely, we prove the following statement by induction on $n$:
	\begin{quote}
		There exists a unique morphism $\widetilde{\varphi}:B_n\to \widetilde{Y}$ such that: $(i)$ $\widetilde{\varphi}$ is a simplicial multimap from $B_n$ to $\widetilde{Y}$, $(ii)$ $\widehat{\gamma} \circ\widetilde{\varphi} = \Gamma$ on $B_n$, $(iii)$ for every $\si\in B_n$ satisfying $\deg_{B_n}(\si)=k$ we have $\widehat{\om}_{\widetilde{\varphi}(\si)}(\beta) . \widetilde{\varphi}(\tau)= \widetilde{\varphi} (\Om_\si(\beta).\tau)$ for any $\tau\in \delta_{T_{d,k}}(\si)$ and $\beta\in C_k$, and $(iv)$ $\widetilde{\varphi}(\CT)=\widehat{\Fa}_0$. 
	\end{quote}
	
	For $n=0$, recall that $B_0$ is the complex composed of a unique $d$-cell $\CT$. Since condition $(iv)$ forces us to have $\widetilde{\varphi}(\CT)=\Fa_0$ and since by condition $(ii)$ the colors must be preserved, there exists a unique simplicial multimap satisfying the induction assumption, namely, the map sending the unique cell of color $J$ contained in $\CT$ to the unique multicell of color $J$ contained in $\widehat{\Fa}_0$.	
	
	Next, assume the induction assumption holds for $n$. We show that $\widetilde{\varphi}$ can be extended to $B_{n+1}$ in a unique way so that $(i)-(iv)$ are satisfied. Indeed, given any $d$-cell $\tau\in B_{n+1}\setminus B_n$, there exists a unique $(d-1)$-cell $\si\in B_n$ such that $\si \subset \tau$. Furthermore, there exists a unique $\tau'\in B_n^d$ containing $\si$. Since $\widetilde{\varphi}$ was already defined on $B_n$, and in particular on $\si$ and $\tau'$, we can use them together with the ordering $\widehat{\om}$ in order to define the map $\widetilde{\varphi}$ for all $d$-cells containing $\si$. Indeed, using the transitivity of the ordering $\Om_\si$ one can find (a unique) $\beta\in C_k$ such that $\Om_\si(\beta).\tau'=\tau$. We then define $\widetilde{\varphi}(\tau)=\widetilde{\varphi}(\Om_\si(\beta).\tau) :=\widehat{\om}_{\widetilde{\varphi}(\si)}(\beta).\widetilde{\varphi}(\tau')$. Finally, we extend the definition of $\widetilde{\varphi}$ to the new lower-dimensional cells in $B_{n+1}$ in the unique possible way which preserves the coloring, i.e., for $\tau\in B_{n+1}\setminus B_n$ and $\si\subset \tau$, define $\widetilde{\varphi}(\si)$ to be the unique multicell contained in $\widetilde{\varphi}(\tau)$ such that $\widehat{\gamma}(\iota\widetilde{\varphi}(\si))=\Gamma(\si)$. The function $\varphi$, which is defined now on $B_{n+1}$, satisfies $(i)-(iv)$ by definition and is the unique such function. This completes the induction step and hence the proof.  
\end{proof}

\begin{remark}
	One can think of $T_{d,k}$ as a hyperbolic building whose Weyl group is the free product of $(d+1)$ copies of the cyclic group of order $2$. We thank Shai Evra and Amitay Kamber for this observation. 
\end{remark}

% ************************************************************************************

\subsection{Links of $T_{d,k}$}

\begin{proposition}\label{prop:links_in_T^d_k}
	For every $-1\leq j\leq d-2$ and $\rho\in T_{d,k}^j$, the link $\lk_{T_{d,k}}(\rho)$ is isomorphic to $T_{d-|\rho|,k}$. 
\end{proposition}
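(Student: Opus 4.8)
The plan is to argue by induction on $|\rho|$, reducing the general statement to the single case of the link of a vertex. The main tool is the standard composition law for links: if $v\in\rho$ and $\rho'=\rho\setminus\{v\}$, then
\[
\lk_{T_{d,k}}(\rho)=\lk_{\lk_{T_{d,k}}(v)}(\rho'),
\]
which holds in any simplicial complex because a cell $\si$ disjoint from $\rho$ satisfies $\rho\amalg\si\in T_{d,k}$ if and only if $\{v\}\amalg\si\in T_{d,k}$ and $\{v\}\amalg\rho'\amalg\si\in T_{d,k}$ (the first condition being implied by the second, using that $T_{d,k}$ is closed under inclusion). The case $\rho=\emptyset$, i.e.\ $j=-1$, is trivial since $\lk_{T_{d,k}}(\emptyset)=T_{d,k}=T_{d-0,k}$. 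Granting the base case $\lk_{T_{d,k}}(v)\cong T_{d-1,k}$ for every vertex $v$, the inductive step writes $\rho=\{v\}\amalg\rho'$, transports $\rho'$ through the isomorphism of the base case to a genuine cell of $T_{d-1,k}$, and applies the induction hypothesis to obtain $\lk_{\lk_{T_{d,k}}(v)}(\rho')\cong\lk_{T_{d-1,k}}(\rho')\cong T_{(d-1)-|\rho'|,k}=T_{d-|\rho|,k}$.

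The heart of the proof is therefore the base case. First I would use the recorded fact that $T_{d,k}$ is transitive at all levels to assume without loss of generality that $v=\CT_0$ is one of the $0$-cells of the seed $d$-cell $\CT$. The natural candidate for the isomorphism is the map sending each $d$-cell $\tau\ni v$ of $T_{d,k}$ to the $(d-1)$-cell $\tau\setminus\{v\}$, and more generally each face $\mu\subseteq\tau$ with $v\in\mu$ to $\mu\setminus\{v\}$; by definition of the link this is a bijection from the cells of $T_{d,k}$ containing $v$ onto the cells of $\lk_{T_{d,k}}(v)$. The claim is that, under this correspondence, the arboreal construction of $T_{d,k}$ restricts to exactly the arboreal construction of $T_{d-1,k}$: the seed becomes the single $(d-1)$-cell $\CT\setminus\{v\}$; whenever the construction of $T_{d,k}$ attaches a new $d$-cell $\tau'$ along a $(d-1)$-face $\si\ni v$ using a fresh vertex $w$, this attaches in the link a new $(d-1)$-cell $\tau'\setminus\{v\}$ along the $(d-2)$-face $\si\setminus\{v\}$ with the same fresh vertex $w$; and the faces of $\CT$ not containing $v$, together with everything built beyond them, simply do not meet the link. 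Reading off these steps shows that the cells of $\lk_{T_{d,k}}(v)$ are produced by precisely the inductive recipe defining $T_{d-1,k}$.

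To complete the base case I would verify the two points that make this restriction genuinely arboreal. For regularity, a $(d-2)$-cell of $\lk_{T_{d,k}}(v)$ has the form $\si\setminus\{v\}$ for a $(d-1)$-cell $\si\ni v$ of $T_{d,k}$, and its degree in the link equals the number of $d$-cells of $T_{d,k}$ containing $\si$, namely $\deg_{T_{d,k}}(\si)=k$; hence the link is $k$-regular of the correct dimension $d-1$. For the absence of spurious identifications, I would use that the arboreal construction introduces a fresh vertex with each new $d$-cell, so that distinct $d$-cells through $v$ yield distinct $(d-1)$-cells in the link and the induced gluing is faithful. This is the step requiring the most care, since one must argue that no two cells through $v$ get collapsed and that the recursion on the faces-through-$v$ never revisits a previously used vertex. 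An alternative, essentially equivalent route for the base case is to check that $\lk_{T_{d,k}}(v)$ is a pure $(d-1)$-dimensional, $k$-regular, link-connected complex in which any two top cells are joined by a unique non-backtracking path — the link-connectedness and the path uniqueness being inherited from $T_{d,k}$ through the composition law — and then to invoke the uniqueness characterization of $T_{d-1,k}$ in Proposition \ref{prop:uniqueness_of_T_d_k}.
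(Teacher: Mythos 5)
Your proof is correct and follows essentially the same route as the paper's: reduce to the case of a single vertex via the composition law for links, then show that the arboreal construction of $T_{d,k}$, restricted to the cells containing $v$, reproduces step by step the arboreal construction of $T_{d-1,k}$ (the paper phrases this as an induction on the balls $B_n(v)$ in the link, attaching $(k-1)$ new $(d-1)$-cells with fresh vertices at each stage). One caveat: the ``alternative route'' you mention at the end, invoking Proposition \ref{prop:uniqueness_of_T_d_k}, would be circular in the paper's logical order, since that proposition is only proved later via Theorem \ref{thm:main_theorem}, whose supporting results (e.g.\ Lemma \ref{lem:stabilized_lemma} and Proposition \ref{prop:quotients_are_link_connected}) already rely on the present statement.
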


\begin{proof}
	Since $\lk_X(\emptyset)=X$ for any complex, we have $\lk_{T_{d,k}}(\emptyset)=T_{d,k}$ which completes the proof for $j=-1$. Furthermore, since for any $\rho,\rho'\in T_{d,k}$ such that $\rho\cap \rho' =\emptyset$ and $\rho\cup\rho'\in T_{d,k}$ it holds that $\lk_{\lk_{T_{d,k}}(\rho)}(\rho') = \lk_{T_{d,k}}(\rho\cup\rho')$, it is sufficient to prove the lemma for $\rho\in T_{d,k}^0$ as the remaining cases follow by induction. 
	
Assume next that $v\in T_{d,k}^0$ and fix a $(d-1)$-dimensional cell $\widehat{\si}\in \lk_{T_{d,k}}(v)$ (note that such a cell exists since $T_{d,k}$ is pure). For $n\in\BN$, denote by $B_n(v)=B_n(v,\widehat{\si})$ the ball of radius $n$ around $\widehat{\si}$ in $\lk_{T_{d,k}}(v)$, that is, the set of $(d-1)$-cells in $\lk_{T_{d,k}}(v)$ whose distance  from $\widehat{\si}$ in the associated line graph is at most $n$, together with the cells contained in them. 
	
	Recalling the inductive method for constructing $T_{d,k}$ and observing that $B_0(v)=\{\widehat{\si}\}$, it is enough to show that for every $n\in\BN$ the ball $B_{n+1}(v)$ is obtained from $B_n(v)$ by attaching to each of the $(d-2)$-cells of degree $1$ additional $(k-1)$ new $(d-1)$-cells, each using a new $0$-cell. We prove this by induction. For $n=0$, since $\widehat{\si}$ corresponds to the $d$-cell $\widehat{\si}\cup v\in T_{d,k}$, and since $T_{d,k}$ as a transitive structure on the $d$-cells, we can assume without loss of generality that $\widehat{\si}\cup v = \CT$. Hence, from the definition of $T_{d,k}$, each of the $(d-1)$-cells in the boundary of $\widehat{\si}\cup v$ is attached to $(k-1)$ additional $d$-cells, each of which uses a new $0$-cell.

	Next, assume that the assumption holds for $B_n(v)$ and observe $B_{n+1}(v)$. 
Each of the $(d-2)$-cells in $B_{n}(v)$ of degree $1$ corresponds to a $(d-1)$-cell containing $v$ in $B_n$ of degree $1$, where without loss of generality we assume that $v\cup \widehat{\si}$ is the $d$-cell around which $B_n$ is constructed, i.e. $\CT=v\cup \widehat{\si}$. Furthermore, given a $(d-2)$-cell $\si\in B_n(v)$ such that $\deg_{B_n(v)}(\si)=1$ and a corresponding $(d-1)$-cell $\tau= v\cup\si\in B_n$ we have $\deg_{B_n}(\tau)=1$, and exactly $(d-1)$ of the $(d-2)$-cells in the boundary of $\tau$ contain $v$. Using the inductive definition of the balls $B_n$, the ball $B_{n+1}$ is obtained from $B_n$ by attaching to each of the $(d-1)$-cells of degree $1$ in $B_n$ an additional $(k-1)$ distinct $d$-cells, each of which using a new $0$-cell. Hence in $\lk_{T_{d,k}}(v)$, the resulting ball $B_{n+1}(v)$ is obtained from $B_n$ by attaching to each of the $(d-2)$-cells of degree $1$ an additional $(k-1)$ distinct $(d-1)$-cells, each of which uses a new $0$-cell. 
\end{proof}

\begin{remark}
	Given a	representative of the universal element $(T_{d,k},\Gamma,\Om,\CT)$ and $\si\in T_{d,k}$, the link $\mathrm{lk}_\si(T_{d,k})$ arrives with a natural $k$-ordering and a coloring (with $(d+1-|\si|)$ colors) inherited from $(T_{d,k},\Gamma,\Om,\CT)$. 
\end{remark}

% **********************************************************************************

\subsection{The action of $G_{d,k}$ on $d$-cells of $(T_{d,k},\Gamma,\Om,\CT)$}

As $(T_{d,k},\Gamma,\Om,\CT)$ is an object of $\scrC_{d,k}$, by Section \ref{sec:group_action} the group $G_{d,k}$ acts on it from the left transitively. Recall the line graph $\scrG(T_{d,k})$ of $T_{d,k}$ and its graph distance $\mathrm{dist}:\scrV(T_{d,k})\times \scrV(T_{d,k})\to \BN\cup\{0\}$ (see Section \ref{sec:preliminaries}).

\begin{lemma}\label{lem:the_action_of_G_on_T_is_free}$~$
	\begin{enumerate}[(1)]
		\item For every $g\in G_{d,k}$, the unique reduced word representing $g$ is of length $m$ if and only if $\dist(g.\CT,\CT)=m$. 
		\item For every representative $(T_{d,k},\Gamma,\Om,\CT)$ of the universal element, the action of $G_{d,k}$ on the $d$-cells of $T_{d,k}$ is simply transitive.
	\end{enumerate}	
\end{lemma}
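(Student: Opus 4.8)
The plan is to reduce both parts to the single identity $\dist(g.\CT,\CT)=|g|$, where $|g|$ denotes the length of the reduced word representing $g$. Part (1) is then just a reformulation of this identity, and part (2) follows immediately: since the action is transitive by Claim \ref{clm:action_of_G_d_k_from_the_left_is_transitive}, it suffices to show the stabilizer of $\CT$ is trivial, and indeed $g.\CT=\CT$ means $\dist(g.\CT,\CT)=0$, forcing $|g|=0$, i.e. $g=e$.

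To prove $\dist(g.\CT,\CT)=|g|$ I would first extract the easy upper bound. Writing the reduced word as $g=\alpha_{j_m}^{l_m}\cdots\alpha_{j_1}^{l_1}$ and setting $\tau_i=\alpha_{j_i}^{l_i}\cdots\alpha_{j_1}^{l_1}.\CT$, the definition of the left action shows that $\tau_i=\Om_{\Fb_i}(\alpha_{j_i}^{l_i}).\tau_{i-1}$, where $\Fb_i$ is the $(d-1)$-face of $\tau_{i-1}$ of color $\ls d\rs\setminus\{j_i\}$. Since $\Om_{\Fb_i}$ acts on the cofaces of $\Fb_i$, we get $\tau_{i-1}\cap\tau_i\supseteq\Fb_i\in T_{d,k}^{d-1}$, so consecutive $\tau$'s are neighbours in the line graph. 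This produces the path $\CT=\tau_0,\dots,\tau_m=g.\CT$ and gives $\dist(g.\CT,\CT)\le m$.

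The heart of the matter is the matching lower bound, and here I would exploit the arboreal structure directly. Because every $d$-cell added during the construction of $T_{d,k}$ uses a brand-new $0$-cell, the bipartite incidence graph $\CB$ whose nodes are the $d$-cells together with the (degree-$k$) $(d-1)$-cells, with an edge joining each $d$-cell to each of its $(d-1)$-faces, is a tree; moreover for $d$-cells the line-graph distance is exactly half the distance in $\CB$. I would then verify that the path above lifts to a \emph{non-backtracking} walk $\tau_0,\Fb_1,\tau_1,\Fb_2,\dots,\Fb_m,\tau_m$ in $\CB$. The step $\tau_{i-1}\to\Fb_i\to\tau_i$ does not backtrack because $\tau_i\neq\tau_{i-1}$: as $\deg_{T_{d,k}}(\Fb_i)=k$, the transitive homomorphism $\Om_{\Fb_i}:C_k\to\CS_{\delta(\Fb_i)}$ is a free action on the $k$ cofaces of $\Fb_i$, and $l_i\not\equiv 0\pmod k$. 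At each $\tau_i$ we then leave through $\Fb_{i+1}\neq\Fb_i$ precisely because $j_{i+1}\neq j_i$ in a reduced word, so $\Fb_i$ and $\Fb_{i+1}$ are the two distinct faces of $\tau_i$ of colors $\ls d\rs\setminus\{j_i\}$ and $\ls d\rs\setminus\{j_{i+1}\}$. A non-backtracking walk in a tree is a geodesic, so $\dist_{\CB}(\tau_0,\tau_m)=2m$ and hence $\dist(g.\CT,\CT)=m$.

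I expect the genuinely load-bearing step to be the justification that $\CB$ is a tree, and the observation that the two reducedness conditions $l_i\neq0$ and $j_i\neq j_{i+1}$ are exactly what forbid the two kinds of backtracking; the remaining steps are bookkeeping with the coloring and the definition of the action. As an alternative to the tree argument one could instead compare ball sizes: transitivity yields a surjection from words of length $\le m$ onto the $d$-cells at distance $\le m$, and a direct count shows that both the number of reduced words of length $m$ and the number of such $d$-cells equal $(d+1)(k-1)\big(d(k-1)\big)^{m-1}$, forcing that surjection to be a length-preserving bijection and giving $\dist(g.\CT,\CT)=|g|$ at once.
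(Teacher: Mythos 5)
Your proof is correct, but it takes a genuinely different route to the key lower bound $\dist(g.\CT,\CT)\ge |g|$. The paper instead works directly with the ball structure $B_m$ of $T_{d,k}$: it first shows by induction that every $d$-cell $\tau$ with $\dist(\tau,\CT)=m$ admits a \emph{unique} ``good path'' $\CT=\tau(0),\ldots,\tau(m)=\tau$ with $\dist(\tau(j),\CT)=j$ (using that each cell of $B_m\setminus B_{m-1}$ neighbours a unique cell of $B_{m-1}$), and then argues by contradiction that the path induced by a reduced word is good, splitting into the two cases $\dist(\CT,\tau(i_0))=i_0-2$ (which forces $j_{i_0-1}=j_{i_0}$) and $\dist(\CT,\tau(i_0))=i_0-1$ (which forces three consecutive cells to share a $(d-1)$-face); part (2) is then deduced from uniqueness of good paths. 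You package the same arboreal structure as the statement that the $d$-cell/$(d-1)$-cell incidence graph $\CB$ is a tree and invoke the standard fact that non-backtracking walks in trees are geodesics. Your version has two small advantages: it makes completely transparent that the two reducedness conditions $l_i\not\equiv 0$ and $j_i\neq j_{i+1}$ correspond exactly to the two possible kinds of backtracking (your freeness observation for $\Om_{\Fb_i}$ when $\deg(\Fb_i)=k$ is exactly what rules out $\tau_i=\tau_{i-1}$, and the paper's two contradiction cases are precisely your two backtracking types in disguise); and your derivation of (2) from the distance formula via triviality of the stabilizer avoids the paper's implicit step of recovering the word from its good path. The paper's approach, in exchange, avoids introducing the auxiliary graph $\CB$ and keeps everything inside the line graph. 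Your alternative counting argument at the end also works, since both the number of reduced words of length $m$ and $|B_m^d\setminus B_{m-1}^d|$ equal $(d+1)(k-1)\bigl(d(k-1)\bigr)^{m-1}$, though as you note it requires the level-by-level cardinality comparison to be spelled out.
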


\begin{proof} $~$
\begin{enumerate}[(1)]
	\item It follows from the inductive construction of $T_{d,k}$ that $\dist(\tau,\CT)=m$ if and only if $\tau\in B_m\setminus B_{m-1}$. The result will follow once we prove the following two claims:
	\begin{enumerate}[(a)]
		\item For every $\tau\in T_{d,k}^d$ such that $\dist(\tau,\CT)=m$, there exists a unique path (called the good path) $\CT=\tau(0),\tau(1),\ldots,\tau(m)=\tau$ of $d$-cells, with $\dist(\tau(j),\CT)=j$ for $0\leq j\leq m$.
		\item Let $g\in G_{d,k}$ and assume $w=\alpha_{j_m}^{l_m} \ldots \alpha_{j_1}^{l_1}$ is  the reduced word representing $g$. Then, the path $\tau(0),\ldots,\tau(m)$, where $\tau(i)=\alpha_{j_i}^{l_i} \ldots \alpha_{j_1}^{l_1}.\CT$ is a good path. 
	\end{enumerate}
	
	We prove $(a)$ by induction on $m$. For $m=0$ the claim is immediate. Assume the statement holds for all $\tau\in T_{d,k}^d$ whose distance from $\CT$ is strictly smaller than $m$ and let $\tau\in T_{d,k}^d$ be a $d$-cell such that $\dist(\tau,\CT)=m$. Due to the  inductive construction of $T_{d,k}$, we know that $\tau\in B_m\setminus B_{m-1}$ and that $\tau$ is a neighbor of a unique $d$-cell $\tau'$ in $B_{m-1}$. In particular, any path from $\CT$ to $\tau$ must visit $\tau'$. Due to the induction assumption, there exists a unique path $\CT=\tau(0),\tau(1),\ldots,\tau(m-1)=\tau'$ such that $\dist(\CT,\tau(i))=i$ for $0\leq i\leq m-1$. 
	Consequently, the path $\CT=\tau(0),\tau(1),\ldots,\tau(m-1),\tau(m):=\tau$ is the unique path from $\CT$ to $\tau$ satisfying the required properties. 
		
	Turning to prove $(b)$, let $g\in G_{d,k}$ and assume that the path $\tau(0),\ldots,\tau(m)$ induced by the reduced word representing $g$ is not a good path, i.e. $\dist(\CT,\tau(i))\neq i$ for some $1\leq i\leq m$. Denoting by $i_0\geq 2$ the minimal such $i$ for which $\dist(\CT,\tau(i))\neq i$ (it is impossible for $i_0$ to be equal to $0$ or $1$ due to the structure of the ball $B_1$), it follows that $\dist(\CT,\tau(i_0-1))=i_0-1$, but $\dist(\CT,\tau(i_0))\neq i_0$. Furthermore, since $\tau(i_0-1)$ and $\tau(i_0)$ are neighbors, it follows that $\dist(\CT,\tau(i_0))\in \{i_0-2,i_0-1\}$. 
	
	In the first case, namely $\dist(\CT,\tau(i_0))=i_0-2$, it follows from the structure of $T_{d,k}$ that $\tau_{i_0}=\tau_{i_0-2}$. Hence, due to the definition of the action of $G_{d,k}$, it holds that $j_{i_0-1}=j_{i_0}$, which contradicts the assumption that the word is reduced. Similarly, if $\dist(\CT,\tau(i_0))=i_0-1$, we must have that $\tau_{i_0-2},\tau_{i_0-1}$ and $\tau_{i_0}$ have a common $(d-1)$-cell, which by the definition of the action, also implies that the word is not reduced. 
	
	\item The transitivity of the action of $G_{d,k}$ on the $d$-cells of $T_{d,k}$ follows from Lemma \ref{clm:action_of_G_d_k_from_the_left_is_transitive}. Thus, it remains to show that the action is simple. Assuming otherwise, one can find two distinct group elements $g_1,g_2\in G_{d,k}$ such that $g_1.\CT=g_2.\CT$. However, each of the reduced words representing $g_1$ and $g_2$ respectively induce a good path on $T_{d,k}$ which starts in $\CT$ and ends in $g_1.\CT=g_2.\CT$. Since such a path is unique, we must conclude that the paths coincide and as a result that $g_1=g_2$.
\end{enumerate}
\end{proof}

\begin{corollary}[The Cayley graph of $G_{d,k}$ and the line graph of $T_{d,k}$]\label{cor:bijection_of_G_and_T}
	Let $S=\{\alpha_i^l ~:~ i\in \ls d\rs,\,l\in [k-1]\}$. Given a representative of the universal object $(T_{d,k},\Gamma,\Om,\CT)$, there is a natural graph bijection between the line graph $\scrG(T_{d,k})$ and the left Cayley graph $\textrm{Cay}(G_{d,k};S)$, given by $g\mapsto g.\CT$. 
\end{corollary}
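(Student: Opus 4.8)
The plan is to exhibit $\Phi\colon g\mapsto g.\CT$ as a bijection on vertex sets which preserves adjacency in both directions. The vertex bijection is immediate from Lemma \ref{lem:the_action_of_G_on_T_is_free}(2): since $G_{d,k}$ acts simply transitively on the $d$-cells of $T_{d,k}$, the orbit map $g\mapsto g.\CT$ is a bijection from $G_{d,k}$ (the vertex set of $\textrm{Cay}(G_{d,k};S)$) onto $T_{d,k}^d=\scrV(T_{d,k})$ (the vertex set of $\scrG(T_{d,k})$). Thus the only real content lies in matching the edges. I would first record that $S=S^{-1}$, so that the left Cayley graph is a well-defined undirected graph: the inverse of $\alpha_i^l$ is $\alpha_i^{k-l}$, which again lies in $S$ since $k-l\in[k-1]$ whenever $l\in[k-1]$. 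With this convention, $g$ and $h$ are adjacent in $\textrm{Cay}(G_{d,k};S)$ precisely when $hg^{-1}\in S$.

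The heart of the argument is a local characterization of adjacency in the line graph: for distinct $d$-cells $\tau,\tau'\in T_{d,k}^d$ one has $\tau\cap\tau'\in T_{d,k}^{d-1}$ if and only if $\tau'=\alpha_i^l.\tau$ for some $i\in\ls d\rs$ and $l\in[k-1]$. For the forward direction, suppose $\tau$ and $\tau'$ share a $(d-1)$-cell $\si:=\tau\cap\tau'$. Since $T_{d,k}$ is colorable and each $d$-cell is rainbow-colored, $\si$ carries color $\ls d\rs\setminus\{i\}$ for a unique $i$, and $\si$ is the unique $(d-1)$-face of $\tau$ of that color. Both $\tau$ and $\tau'$ lie in $\delta_{T_{d,k}}(\si)$, and because $T_{d,k}$ is $k$-regular the ordering $\Om_\si\colon C_k\to\CS_{\delta_{T_{d,k}}(\si)}$ is a transitive homomorphism onto a set of size $k$, hence a regular (simply transitive) action; so there is a unique $l\in[k-1]$ with $\Om_\si(\alpha_i^l).\tau=\tau'$, which by the definition of the action reads $\alpha_i^l.\tau=\tau'$. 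Conversely, if $\tau'=\alpha_i^l.\tau=\Om_\Fb(\alpha_i^l).\tau$ with $l\in[k-1]$, where $\Fb$ is the $(d-1)$-face of $\tau$ of color $\ls d\rs\setminus\{i\}$, then $\Om_\Fb(\alpha_i^l)$ permutes $\delta_{T_{d,k}}(\Fb)$, so $\tau'$ still contains $\Fb$; thus $\Fb\subseteq\tau\cap\tau'$ and $\tau,\tau'$ are line-graph neighbors (and $\tau'\neq\tau$ since the regular action has no nontrivial fixed points).

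Finally I would combine the two facts. Writing $\tau=g.\CT$, $\tau'=h.\CT$ and using that $G_{d,k}$ acts on the left, the relation $\tau'=\alpha_i^l.\tau$ becomes $h.\CT=(\alpha_i^l g).\CT$, which by the injectivity of $\Phi$ (Lemma \ref{lem:the_action_of_G_on_T_is_free}(2)) is equivalent to $h=\alpha_i^l g$, i.e. $hg^{-1}=\alpha_i^l\in S$. Hence $\Phi(g)$ and $\Phi(h)$ are adjacent in $\scrG(T_{d,k})$ if and only if $g$ and $h$ are adjacent in $\textrm{Cay}(G_{d,k};S)$, so $\Phi$ is the asserted graph isomorphism.

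I expect the local adjacency characterization to be the step requiring the most care: one must invoke colorability to guarantee that a shared $(d-1)$-face determines a unique color-index $i$ and is the unique face of $\tau$ of that color, and the transitivity (indeed regularity, using $k$-regularity of $T_{d,k}$) of $\Om_\si$ to produce a well-defined generator. As a sanity check one should note that there are no coincidences: two distinct shared $(d-1)$-faces would force $\tau=\tau'$, and distinct generators $\alpha_i^l$ yield distinct neighbors by simple transitivity, so both graphs are regular of degree $(d+1)(k-1)$ and $\Phi$ restricts to a genuine bijection of edge sets.
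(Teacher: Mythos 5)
Your proof is correct and follows essentially the same route as the paper: use the simply transitive action (Lemma \ref{lem:the_action_of_G_on_T_is_free}(2)) for the vertex bijection, then identify the edges of $\scrG(T_{d,k})$ with the generators $\alpha_i^l$. The only difference is that the paper deduces the adjacency characterization from Lemma \ref{lem:the_action_of_G_on_T_is_free}(1) (reduced word length equals line-graph distance), whereas you prove the local statement ``$\tau\sim\tau'$ distinct iff $\tau'=\alpha_i^l.\tau$'' directly from colorability and the regularity of $\Om_\si$ --- which in fact makes explicit why adjacency depends only on $hg^{-1}$, a point the paper leaves implicit.
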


\begin{proof}
	Since $G_{d,k}$ acts simply transitive on the $d$-cells of $T_{d,k}$, which are exactly the vertices of the line graph $\scrG(T_{d,k})$, it follows that $\scrG(T_{d,k})$ is isomorphic to the Cayley graph of $G_{d,k}$ with respect to the generator set $S'=\{g\in G_{d,k} ~:~ g.\CT \text{ is a neighbor of }\CT\text{ and }g.\CT\neq \CT\}$. Hence, it remains to show that $S'=S$. This follows from Lemma \ref{lem:the_action_of_G_on_T_is_free}(1), as the only elements $g\in G_{d,k}$ such that $g.\CT$ and $\CT$ are distinct neighbors, that is $\dist(g.\CT,g)=1$, are represented by reduced words of length $1$ which are exactly the words $\alpha_i^l$ for $i\in \ls d\rs$ and $l\in  [k-1]$. 
\end{proof}

% ************************************************************************************

\subsection{The action of $G_{d,k}$ on general cells of $(T_{d,k},\Gamma,\Om,\CT)$}\label{subsec:general_left_action_in_T_d_k}

Although the group $G_{d,k}$ does not act directly from the left on lower-dimensional cells of $T_{d,k}$, one can use the action of $G_{d,k}$ on $d$-cells of $T_{d,k}$ in order to define an action of $G_{d,k}$ on pairs of cells of the form 
$(\rho,\tau)\in T_{d,k}\times T_{d,k}^d$, where $\rho\subseteq \tau$. This is done by setting $g.(\rho,\tau)$ to be the pair $(\rho',\tau')$, where $\tau'=g.\tau$, and $\rho'$ is the unique $j$-cell in $g.\tau$ whose color is the same as the color of $\rho$. 

Despite of the fact that the action of $G_{d,k}$ on $d$-cells of $T_{d,k}$ is simply transitive (see Lemma \ref{lem:the_action_of_G_on_T_is_free}), it is possible for certain group elements to stabilize a $j$-cell $\rho$ in a pair $(\rho,\tau)$, while changing the $d$-cell itself (this means in particular that both $\tau$ and $g.\tau$ contain $\rho$). We thus wish to study the subgroup
\begin{equation}\label{eq:stabilizer_of_a_lower_cell}
	L_{\rho,\tau}:=\big\{g\in G_{d,k} ~:~ g.(\rho,\tau)=(\rho,g.\tau)\big\}
\end{equation}
for $\rho\in T_{d,k}$ and $\rho\subseteq \tau \in T_{d,k}^d$.

\begin{definition}\label{def:subgroup_K}
	For $J\subseteq \ls d\rs$, define the subgroup 
	\begin{equation}
	K_J = \langle\alpha_j ~:~ j\in J\rangle \leq G_{d,k}.
	\end{equation}
	We occasionally use the notation $\widehat{J}$ to denote $\ls d\rs\setminus J$. Furthermore, for $i\in \ls d\rs$, we use the abbreviation $\widehat{i}$ for $\widehat{\{i\}}$. 
\end{definition}

\begin{lemma}\label{lem:stabilized_lemma}
	Let $(T_{d,k},\Gamma,\Om,\CT)$ be a representative of the universal object. For every $\tau\in T_{d,k}^d$ and $\rho \subseteq \tau$  
	\begin{equation}
		L_{\rho,\tau} = K_{\ls d\rs\setminus \Gamma(\rho)}.
	\end{equation} 
	In particular, $L_{\rho,\tau}$ depends only on the color of $\rho$ and not on the choice of the $d$-cell $\tau$. 
\end{lemma}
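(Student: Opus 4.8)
The plan is to reformulate the set $L_{\rho,\tau}$ geometrically and then identify it with a single group orbit. Writing $C:=\Gamma(\rho)$, recall that $g.(\rho,\tau)=(\rho',g.\tau)$, where $\rho'$ is the unique cell of $g.\tau$ of color $C$. Since $T_{d,k}$ is colorable, a $d$-cell possesses exactly one face of each subset of colors, so $\rho'=\rho$ precisely when $\rho\subseteq g.\tau$. Hence $L_{\rho,\tau}=\{g\in G_{d,k} : \rho\subseteq g.\tau\}$. I would then prove the single geometric identity
\[
 K_{\ls d\rs\setminus C}.\tau=\{\sigma\in T_{d,k}^d : \rho\subseteq\sigma\},
\]
from which the lemma follows at once: if $g\in L_{\rho,\tau}$ then $g.\tau$ is a $d$-cell containing $\rho$, so $g.\tau=h.\tau$ for some $h\in K_{\ls d\rs\setminus C}$, and the simple transitivity of the action (Lemma \ref{lem:the_action_of_G_on_T_is_free}(2)) forces $g=h\in K_{\ls d\rs\setminus C}$; the independence of $L_{\rho,\tau}$ from $\tau$ is then automatic, since the right-hand side depends only on $\rho$.

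For the inclusion $\subseteq$ of the displayed identity (which simultaneously yields $K_{\ls d\rs\setminus C}\subseteq L_{\rho,\tau}$), I would argue by induction on word length. For $i\in\ls d\rs\setminus C$, the generator $\alpha_i^{l}$ acts on a $d$-cell $\sigma\supseteq\rho$ through $\om_{\Fb}$, where $\Fb\in\partial\sigma$ is the $(d-1)$-face of color $\ls d\rs\setminus\{i\}$; as $i\notin C$, every vertex of $\rho$ has color $\neq i$, whence $\rho\subseteq\Fb$, and since $\Fb$ is fixed by the rotation, $\rho\subseteq\Fb\subseteq\alpha_i^l.\sigma$. Iterating over the letters of any $h\in K_{\ls d\rs\setminus C}$ keeps $\rho$ inside the image, so $K_{\ls d\rs\setminus C}.\tau$ consists of $d$-cells containing $\rho$.

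The main work is the reverse inclusion $\supseteq$, namely that every $d$-cell $\sigma\supseteq\rho$ is reachable from $\tau$ using only letters $\alpha_i$ with $i\in\ls d\rs\setminus C$; this is where I expect the real content to lie, since the $G_{d,k}$-action is not simplicial and one must convert adjacency of cells back into the generators. Here I would invoke Proposition \ref{prop:links_in_T^d_k}: the map $\eta\mapsto\eta\amalg\rho$ identifies the top-dimensional cells of $\lk_{T_{d,k}}(\rho)\cong T_{d-|\rho|,k}$ with the $d$-cells of $T_{d,k}$ containing $\rho$, and matches their codimension-$1$ adjacencies. Since $T_{d-|\rho|,k}$ is $\bigl(d-|\rho|\bigr)$-lower path connected, I can join $\tau\setminus\rho$ to $\sigma\setminus\rho$ by a path of top-cells sharing codimension-$1$ faces and lift it to a path $\tau=\tau(0),\ldots,\tau(p)=\sigma$ of $d$-cells in which each consecutive pair shares a $(d-1)$-face $\Fb_j\supseteq\rho$. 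Because $\Fb_j$ contains $\rho$, its missing color $i_j$ lies in $\ls d\rs\setminus C$, and transitivity of $\om_{\Fb_j}$ supplies $l_j$ with $\tau(j)=\om_{\Fb_j}(\alpha_{i_j}^{l_j}).\tau(j-1)=\alpha_{i_j}^{l_j}.\tau(j-1)$.

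Composing these steps gives $\sigma=\alpha_{i_p}^{l_p}\cdots\alpha_{i_1}^{l_1}.\tau$ with all $i_j\in\ls d\rs\setminus C$, so $\sigma\in K_{\ls d\rs\setminus C}.\tau$, completing the identity. The one point needing care is that the intersection of two adjacent $d$-cells is exactly their shared $(d-1)$-face and therefore really contains $\rho$: in a simplicial complex $\tau(j-1)\cap\tau(j)$ is itself a cell, it is a proper face of each (the cells being distinct) and it contains the common $(d-1)$-face, so it equals that face, and $\rho\subseteq\tau(j-1)\cap\tau(j)$. Together with the first paragraph this establishes $L_{\rho,\tau}=K_{\ls d\rs\setminus\Gamma(\rho)}$.
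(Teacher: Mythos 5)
Your proof is correct and follows essentially the same route as the paper's: the inclusion $K_{\ls d\rs\setminus\Gamma(\rho)}\subseteq L_{\rho,\tau}$ by checking that each generator $\alpha_i$ with $i\notin\Gamma(\rho)$ acts through a $(d-1)$-face containing $\rho$, and the reverse inclusion by passing to a link (you use $\lk(\rho)$, the paper uses $\lk(g.\tau\cap\tau)$), lifting a lower path of top cells to a word in the allowed generators, and invoking the simple transitivity of Lemma \ref{lem:the_action_of_G_on_T_is_free}. Your repackaging of the statement as the orbit identity $K_{\ls d\rs\setminus\Gamma(\rho)}.\tau=\{\sigma\in T_{d,k}^d:\rho\subseteq\sigma\}$ is a clean but cosmetic variation.
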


\begin{proof}
	The case $\rho=\tau$ follows from Lemma \ref{lem:the_action_of_G_on_T_is_free}. Hence we assume that $\rho\subsetneq \tau$. We first show that $K_{\ls d\rs\setminus \Gamma(\rho)}\leq L_{\rho,\tau}$. Since $K_{\ls d\rs\setminus \Gamma(\rho)}$ is generated by $(\alpha_j)_{j\in \ls d\rs \setminus \Gamma(\rho)}$ and $L_{\rho,\tau}$ is a group, it suffices to show that $(\alpha_j)_{j\in \ls d\rs \setminus \Gamma(\rho)}\subseteq L_{\rho,\tau}$. Fix some $j\in \ls d\rs \setminus \Gamma(\rho)$. Given $\tau\in T^d_{d,k}$ such that $\rho\subseteq \tau$, let $\si$ be the unique $(d-1)$-cell contained in $\tau$ of color $\ls d\rs \setminus j$. By the assumption on $j$, we know that $\Gamma(\rho) \subseteq \ls d \rs \setminus j$ and hence that $\rho\subseteq \si$. Using the definition of the action on $d$-cells, $\alpha_j.\tau = \om_{\si}(\alpha_j).\tau$ must contain $\si$ and thus also $\rho$. This implies that $\rho$ is also stabilized by the action of $\alpha_j$, i.e. $\alpha_j.(\rho,\tau)=(\rho,\alpha_j.\tau)$, as required. 
	
	Next, we show that $L_{\rho,\tau} \leq K_{\ls d\rs \setminus \Gamma(\rho)}$. Assume $g\in L_{\rho,\tau}$, then $g.(\rho,\tau)=(\rho,g.\tau)$ and thus in particular $\rho\subseteq \sigma:=g.\tau \cap \tau$. Consider the link of $\si$. Since $\tau\setminus \si, (g.\tau)\setminus \si \in \lk_X(\si)$ and since $\lk_X(\si)$ is isomorphic to the universal arboreal complex $T_{d-|\si|,k}$ (see Proposition \ref{prop:links_in_T^d_k}), it follows that there is a path $\gamma$ of $(d-|\si|)$-cells, connecting $\tau\setminus \si$ and $(g.\tau)\setminus \si$ in $\lk_X(\si)$. This path can be pulled back to $T_{d,k}$, thus creating a path of $d$-cells $\tau=\tau(0),\tau(1),\ldots,\tau(m)=g.\tau$ such that $\si\subset \tau(i)$ for every $0\leq i\leq m$. Defining $j_i\in\ls d\rs$ and $l_i\in \ls k-1\rs$ for $1\leq i\leq m$ to be the unique integers such that $\tau(i) = \alpha_{j_i}^{l_i}.\tau(i-1)$ (which must exist since $\tau(i-1)\cap\tau(i)$ is a $(d-1)$-cell) we conclude that $\alpha_{j_m}^{l_m}\ldots \alpha_{j_1}^{l_1}.\tau=g.\tau$. Recalling that $G_{d,k}$ acts freely on $T_{d,k}^d$, this implies $g=\alpha_{j_m}^{l_m}\ldots \alpha_{j_1}^{l_1}$. Finally, since for every $1\leq i\leq m$ we have $\si\subset \tau(i-1)\cap \tau(i)$ it follows that $j_i\in \ls d\rs \setminus \Gamma(\si)\subseteq \ls d\rs \setminus \Gamma(\rho)$ for every $1\leq i\leq m$. Thus $g=\alpha_{j_m}^{l_m}\ldots \alpha_{j_1}^{l_1}\in K_{\ls d\rs \setminus \Gamma(\si)}\leq K_{\ls d\rs \setminus \Gamma(\rho)}$ as required. 
\end{proof}

The last lemma allows us to generalize Corollary \ref{cor:bijection_of_G_and_T} and define a bijection between general cells of $T_{d,k}$ and certain cosets of the group $G_{d,k}$.

\begin{definition}[Cosets in $G_{d,k}$]\label{def:cosets_of_G_d_k} For $-1\leq j\leq d$ let 	
	\begin{equation}\label{eq:thm_j_cells_in_Tdk_as_costes}
		\CM^j:=\big\{K_{\widehat{J}} g ~:~ g\in G_{d,k},\, J\subseteq \ls d\rs \text{ such that }|J|=j+1\big\},
	\end{equation}
	where $K_J$ is defined as in Definition \ref{def:subgroup_K}. Also, set $\CM:=\bigcup_{j=-1}^d \CM^j$.
\end{definition}

\begin{corollary}
\label{cor:bijection_of_cells_in_T_with_cosets}
	Given a representative of the universal object $(T_{d,k},\Gamma,\Om,\CT)$, for every $-1\leq j\leq d$, there is a natural bijection $\Psi_j:\CM^j \to T_{d,k}^j$, given by 
	\begin{equation}\label{eq:defn_of_Psi}
		\Psi_j(K_{\widehat{J}}g) =
		\begin{array}{c}
		\text{the unique cell in}\\
		g.\CT\text{ whose color is } J.
		\end{array} 
	\end{equation}
	For future use, we denote by $\Psi:\CM \to T_{d,k}$ the map, whose restriction to $\CM^j$ is $\Psi_j$ for every $-1\leq j\leq d$. 
\end{corollary}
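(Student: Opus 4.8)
The plan is to verify that $\Psi_j$ is well defined on cosets and then that it is both injective and surjective; the single engine behind all three checks is the stabilizer computation of Lemma \ref{lem:stabilized_lemma}, which says that if $\rho$ is a cell of color $J$ contained in a $d$-cell $\tau$, then $L_{\rho,\tau}=K_{\ls d\rs\setminus\Gamma(\rho)}=K_{\widehat J}$. I would first record the elementary fact that, because $T_{d,k}$ is colorable and every $d$-cell meets each color exactly once, for any $g\in G_{d,k}$ and any $J\subseteq\ls d\rs$ with $|J|=j+1$ there is a unique $j$-cell of color $J$ inside the $d$-cell $g.\CT$; hence the rule sending $K_{\widehat J}g$ to the color-$J$ cell of $g.\CT$ assigns a well-defined cell to each pair $(J,g)$.

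For well-definedness on cosets, I would suppose $K_{\widehat J}g=K_{\widehat J}g'$, so that $g'=hg$ with $h\in K_{\widehat J}$. Writing $\tau=g.\CT$ and letting $\rho\subseteq\tau$ be the color-$J$ cell, Lemma \ref{lem:stabilized_lemma} gives $h\in K_{\widehat J}=L_{\rho,\tau}$, which by the definition of the action on pairs $(\rho,\tau)$ means exactly that $h.(\rho,\tau)=(\rho,h.\tau)$. In other words the color-$J$ cell of $h.\tau=g'.\CT$ is again $\rho$, so $\Psi_j(K_{\widehat J}g')=\rho=\Psi_j(K_{\widehat J}g)$.

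Surjectivity follows immediately from purity and Lemma \ref{lem:the_action_of_G_on_T_is_free}: given $\rho'\in T_{d,k}^j$, choose by purity a $d$-cell $\tau'\supseteq\rho'$, use simple transitivity to write $\tau'=g.\CT$ for a unique $g$, and set $J=\Gamma(\rho')$; then $\rho'$ is the color-$J$ cell of $g.\CT$, i.e. $\Psi_j(K_{\widehat J}g)=\rho'$. For injectivity I would suppose $\Psi_j(K_{\widehat J}g)=\Psi_{j}(K_{\widehat{J'}}g')=:\rho$. Since this common image cell carries a single color, read off as $J$ from the first expression and $J'$ from the second, we get $J=J'$. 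Setting $h=g(g')^{-1}$ we have $h.(g'.\CT)=g.\CT$, and since the color-$J$ cells of $g'.\CT$ and of $g.\CT$ are both $\rho$, the element $h$ satisfies $h.(\rho,g'.\CT)=(\rho,h.(g'.\CT))$, i.e. $h\in L_{\rho,g'.\CT}=K_{\widehat J}$ by Lemma \ref{lem:stabilized_lemma}. Hence $g(g')^{-1}\in K_{\widehat J}$, which is precisely $K_{\widehat J}g=K_{\widehat J}g'$.

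The argument is short because all the real work already sits in Lemma \ref{lem:stabilized_lemma}; the only points demanding care are bookkeeping ones, and I expect these to be the main (though mild) obstacle rather than any genuine difficulty. First, one must keep the left action and the left cosets aligned, using that $K_{\widehat J}g=K_{\widehat J}g'$ is equivalent to $g(g')^{-1}\in K_{\widehat J}$. Second, the recoverability of $J$ from the output cell via its color $\Gamma$ is what forces $J=J'$ in the injectivity step and must be invoked explicitly. As a sanity check, the case $j=d$ — where $\widehat J=\emptyset$ and $K_{\widehat J}=\{e\}$, so cosets are group elements and $\Psi_d(g)=g.\CT$ — recovers Corollary \ref{cor:bijection_of_G_and_T}, while $j=-1$ sends the single coset $G_{d,k}$ to the empty cell.
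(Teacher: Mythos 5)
Your proof is correct and follows essentially the same route as the paper: well-definedness and injectivity both reduce to the stabilizer computation $L_{\rho,\tau}=K_{\widehat{J}}$ of Lemma \ref{lem:stabilized_lemma}, and surjectivity to purity plus the simple transitivity of Lemma \ref{lem:the_action_of_G_on_T_is_free}. The only point you gloss over is that a coset $K_{\widehat{J}}g$, viewed as a subset of $G_{d,k}$, determines $J$ (the paper checks this first: $K_{\widehat{J}}g=K_{\widehat{J'}}g'$ forces $K_{\widehat{J}}=K_{\widehat{J'}}$ and hence $J=J'$), which is needed for $\Psi_j$ to descend from pairs $(J,g)$ to elements of $\CM^j$; this is a one-line addition.
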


\begin{proof}
	We start by proving that $\Psi$ is well defined. Assume that for some $J,J'\subseteq \ls d\rs$ such that $|J|=|J'|$ and some $g,g'\in G_{d,k}$ we have $K_{\widehat{J}}g = K_{\widehat{J'}}g'$. Then $g\in  K_{\widehat{J'}}g' ~~\Rightarrow ~~ K_{\widehat{J'}}g\subseteq K_{\widehat{J'}}g'=K_{\widehat{J}}g ~~ \Rightarrow ~~ K_{\widehat{J'}}\subseteq K_{\widehat{J}}$. Applying the same argument in the opposite direction we conclude that $K_{\widehat{J}}=K_{\widehat{J'}}$ and hence that $J=J'$. Next, observe that $K_{\widehat{J}}g=K_{\widehat{J'}}g'=K_{\widehat{J}}g'$ and therefore that $gg'^{-1}\in K_{\widehat{J}}$. Recalling the definition of $\Psi_j$ we obtain that $\Psi_j(K_{\widehat{J}} g)$ is the unique $j$-cell $\rho$ in $g.\CT$ such that $\Gamma(\rho)=J$, and similarly that $\Psi_j(K_{\widehat{J'}}g')=\Psi_j(K_{\widehat{J}} g')$ is the unique $j$-cell $\rho'$ in $g'.\CT$ such that $\Gamma(\rho')=J$. However, since $gg'^{-1}\in K_{\widehat{J}}$, it follows from Lemma \ref{lem:stabilized_lemma} that the unique $j$-cell in $g'.\CT$ whose color is $J$ is the same as the unique $j$-cell in $(gg'^{-1}).(g'.\CT)=g.\CT$ whose color is $J$, that is $\rho=\rho'$. This completes the proof that $\Psi_j$ is well defined. 
	
	Next, we show that the map is onto. Let $\rho\in T_{d,k}^j$, and assume that $\Gamma(\rho)=J$. Since $T_{d,k}$ is pure, one can find a $d$-cell $\tau$ containing $\rho$.  By Lemma \ref{lem:the_action_of_G_on_T_is_free}, any $d$-cell of $T_{d,k}$ (and in particular $\tau$) can be written in the form $g.\CT$ for some $g\in G_{d,k}$. Hence, $\Psi_j(K_{\widehat{J}}g)=\rho$. 

	Finally, we show that the map is injective. Assume that $\rho:=\Psi_j(K_{\widehat{J}}g)=\Psi_j(K_{\widehat{J'}}g')$. It follows from the definition of the map $\Psi_j$ that $J=J'$, since otherwise the colors of the $j$-cells $\Psi_j(K_{\widehat{J}}g)$ and $\Psi_j(K_{\widehat{J'}}g')$ are not the same and in particular the cells are distinct. Furthermore, the $d$-cells $g.\CT$ and $g'.\CT$ have $\rho$ as a common $j$-cell such that $\Gamma(\rho)=J$. This implies that $gg'^{-1}\in L_{\rho,\tau}$ and therefore by Lemma \ref{lem:stabilized_lemma} that $gg'^{-1}\in K_{\ls d\rs \setminus \Gamma(\rho)}=K_{\widehat{J}}$. Consequently, $K_{\widehat{J'}}g'=K_{\widehat{J}} g'=K_{\widehat{J}} (gg'^{-1})g'=K_{\widehat{J}} g$, which proves that the map is injective. 
\end{proof}

% ************************************************************************************

\subsection{The right action of $G_{d,k}$ on $T_{d,k}$, and $G_{d,k}$ invariant coloring and ordering}

Let $(T_{d,k},\Gamma,\Om,\CT)$ be a representative of the universal object. Using Corollary \ref{cor:bijection_of_G_and_T} and Corollary \ref{cor:bijection_of_cells_in_T_with_cosets}, one can define the \emph{right} action of $G_{d,k}$ on the $d$-cells of $T_{d,k}$ as the right action of $G_{d,k}$ on the corresponding left Cayley graph. Formally, given a representative of the universal object $(T_{d,k},\Gamma,\Om,\CT)$, denote by $\Psi_d : G_{d,k} \to T_{d,k}^d=\scrV(T_{d,k})$ the graph bijection introduced in Corollary \ref{cor:bijection_of_G_and_T}, given by $\Psi_d(g)=g.\CT$ for every $g\in G_{d,k}$. Then, for $g\in G_{d,k}$ and $\tau\in T_{d,k}^d$ define $\tau.g$ to be $\Psi_d(\Psi_d^{-1}(\tau)g^{-1})$. 

Since the action is the right action on a left Cayley graph, it preserves the graph structure and in particular $\dist(\tau,\tau')=\dist(\tau.g,\tau'.g)$ for every $\tau,\tau'\in T_{d,k}^d$ and $g\in G_{d,k}$, where $\dist$ is the graph distance in the line graph $\scrG(T_{d,k})$. Furthermore, if we color the edges of $\textrm{Cay}(G_{d,k};S)$ by $S$, using the color $s$ for edges of the form $\{g,sg\}$, then the right action also preserves the edge color.
%Note that this is indeed an action since for every $g_1,g_2\in G_{d,k}$ we have $(\tau.g_1).g_2 = \Psi(\Psi^{-1}(\tau.g_1)g_2) = \Psi(\Psi^{-1}([\Psi(\Psi^{-1}(\tau)g_1)])g_2) = \Psi(\Psi^{-1}g_1g_2) = \tau.(g_1g_2)$. 

Unlike the left action of $G_{d,k}$ on $T_{d,k}^d$, which cannot be extended directly to the lower dimensional cells (see Subsection \ref{subsec:general_left_action_in_T_d_k}), the right action can be extended to act on all cells of $T_{d,k}$. Indeed, given any cell $\rho\in T_{d,k}$ define $\rho.g$ as follows: choose any $d$-cell $\tau$ containing $\rho$, and set $\rho.g$ to be the unique cell in $\tau.g$  whose color is  $\Gamma(\rho)$. In order to see that this is well defined, note that $(1)$ for every $d$-cell $\tau'$ in $T_{d,k}$, there is a bijection between $\{\rho ~:~ \rho\subset \tau'\}$ and subsets of $\ls d\rs$, and $(2)$ if $\tau$ and $\tau'$ are two 2 different $d$-cells containing $\rho$, i.e. $\rho \subset \tau\cap \tau'$ then $\tau.g \cap \tau'.g$ contains a subset $\rho'$ such that $\Gamma(\rho')=\Gamma(\rho)$ (since the right action preserves the structure of the edge-colored Cayley graph). Consequently, $\rho.g$ does not depend on the choice of the $d$-cell containing it. Note that the right action preserves the coloring of the cells in $T_{d,k}$ by definition, and that for every choice of $J\subseteq \ls d\rs$, it is transitive on cells of color $J$. Another way to see this is to use Corollary \ref{cor:bijection_of_cells_in_T_with_cosets}: the right action of $G_{d,k}$ on the $j$-dimensional cells of color $J$ is equivalent to the right action of $G_{d,k}$ on $M_J = G_{d,k}/K_J$. 

Due to the existence of a coloring preserving right action of $G_{d,k}$ on all the cells of $T_{d,k}$, it is natural to seek orderings which are preserved under the right group action. Formally, we call an ordering $\Om$ of $T_{d,k}$ a $G_{d,k}$-invariant ordering if for every $\si\in T_{d,k}^{d-1}$, $\tau\in \delta(\si)$ and $g\in G_{d,k}$,
\begin{equation}\label{eq:invariant_ordering}
\begin{array}{l}
	(\Om_\si(\beta).\tau).g = \Om_{\si.g}(\beta).(\tau.g),\qquad \forall \beta\in C_k. \\
\end{array}
\end{equation}
The last equation gives a simple way to define an invariant ordering for $T_{d,k}$ by defining the ordering on the $(d-1)$-cells of $\CT$ and using \eqref{eq:invariant_ordering} to define the ordering on the remaining $(d-1)$-cells of $T_{d,k}$. Formally, for $0\leq i\leq d$, denote by $\Si_i$ the unique $(d-1)$-cell of $\CT$ whose color is $\widehat{i}$ and define the ordering:
\begin{equation}\label{eq:invariant_ordering_defn}
	\begin{array}{l}
		\Omega_{\Si_i}(\alpha_i^l).\CT := \CT.\alpha_i^l,\qquad\qquad\qquad \forall i\in \ls d\rs,\, l\in\ls k-1\rs,\\
		~\\
		\Om_{\si.g}(\beta).(\tau.g):=(\Om_\si(\beta).\tau).g,\qquad \forall \beta\in C_k,\, g\in G_{d,k},\, \si\in T_{d,k}^{d-1},\, \si\subset \tau\in T_{d,k}^d.
	\end{array}
\end{equation}
This defines the ordering completely since the right action of $G_{d,k}$ is transitive on cells of the same dimension and color. One can verify that if $\tau\in T_{d,k}^d$ and $\si\subset \tau$ is the unique $(d-1)$-cell of color $\widehat{i}$ in it, then there exists a unique $g\in G_{d,k}$ such that $\tau.g = \CT$ and $\si.g=\Si_i$. Hence,
\begin{equation}
	\Om_\si(\alpha_i^l).\tau = \Om_{\Si_i.g}(\alpha_i^l).(\tau.g) = (\Omega_{\Si_i}(\alpha_i^l).\CT).g = \CT.\alpha_i^l g.
\end{equation}
Throughout the remainder of this paper we assume that any ordering $\Om$ of $T_{d,k}$ is $G_{d,k}$-invariant, i.e. it satisfies \eqref{eq:invariant_ordering_defn}.

% **********************************************************************************
% **********************************************************************************

\section{Group interpretation of $T_{d,k}$}\label{sec:group_interpretation_of_T_d_k}

In this section we describe two additional constructions of the universal object $T_{d,k}$. The first uses the group $G_{d,k}$ and the results proved in the previous section on its action on $T_{d,k}$. This hints at the construction for general subgroups of $G_{d,k}$ discussed in the next section. Using the first construction, one can introduce a second one which is based on the notion of a nerve complex (see Definition \ref{def:nerve_complex}). Since a significant portion of the proofs for both constructions is a special case (for the subgroup $H=\langle e\rangle \leq G_{d,k}$) of the more general theory (presented in the Sections \ref{sec:from_subgroup_to_elements_of_the_category}-\ref{sec:further_relations}), the proofs are postponed.

Recall the definition of $\CM$ (see Definition \ref{def:cosets_of_G_d_k}) and for $-1\leq j\leq d$, let $\Phi : \CM \to \bigcup_{j=-1}^d \binom{\CM^0}{j+1}$ be the map
	\begin{equation}\label{eq:defn_of_Phi}
		\Phi(K_{\widehat{J}}g) = \{K_{\widehat{i}}g ~:~ i\in J\},\qquad \forall J\subseteq \ls d\rs,\, g\in G_{d,k}.
	\end{equation}		

One can verify (see Lemma \ref{lem:well_defind_map_for_double_cosets}) that the map $\Phi$ is well defined and that $\Phi(K_{\widehat{J}}g)\in \binom{M^0}{j+1}$ whenever $J\subseteq \ls d\rs$ is of size $j+1$. 

\begin{definition}[The simplicial complex $\FX_{d,k}$]
	Using the set $\CM$ and the map $\Phi$, we define the simplicial complex $\FX_{d,k}$ with vertex set $\CM^0$ as follows: $\FX_{d,k}^{-1} = \{\emptyset\}$, $\FX_{d,k}^0 = \CM^0$, and for $1\leq j\leq d$
	\begin{equation}
		\FX_{d,k}^j = \{\Phi(K_{\widehat{J}} g) ~:~ K_{\widehat{J}}g\in \CM^j\}.
	\end{equation}
\end{definition}

It is not difficult to check that $\FX_{d,k}$ is indeed a $d$-dimensional simplicial complex with vertex set $\CM^0$. In addition, there is a bijection between $j$-cells of $\FX_{d,k}$ and elements in $\CM^j$, that is, the map $\Phi|_{\CM^j}:\CM^j\to \FX_{d,k}^j$ is a bijection. Combining this with Corollary \ref{cor:bijection_of_cells_in_T_with_cosets} we obtain:

\begin{theorem}[The complex $\FX_{d,k}$ is the universal object]\label{thm:j-cells_in_Tdk_as_cosets} The $d$-complex $\FX_{d,k}$ is isomorphic to $T_{d,k}$. Furthermore, for any choice of a representative of the universal object  $(T_{d,k},\Gamma,\Om,\CT)$ one can endow $\FX_{d,k}$ with a natural coloring, $k$-ordering and a root such that the map $\Psi\circ \Phi^{-1}:\FX_{d,k}\to T_{d,k}$
is a bijective morphism (in the sense of the category $\scrC_{d,k}$), where $\Psi$ is as defined in Corollary \ref{cor:bijection_of_cells_in_T_with_cosets}, and $\Phi$ is as defined in equation \eqref{eq:defn_of_Phi}.
\end{theorem}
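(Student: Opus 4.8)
The plan is to build the isomorphism by combining the two coset bijections already established. First I would observe that the map $\Psi \circ \Phi^{-1}$ is well defined: by Corollary \ref{cor:bijection_of_cells_in_T_with_cosets}, $\Psi_j : \CM^j \to T_{d,k}^j$ is a bijection, and since (as noted in the excerpt) $\Phi|_{\CM^j} : \CM^j \to \FX_{d,k}^j$ is a bijection for each $-1 \leq j \leq d$, the composite $\Psi \circ \Phi^{-1} : \FX_{d,k} \to T_{d,k}$ is a dimension-preserving bijection on cells. The substance of the theorem is that this bijection is \emph{simplicial} in both directions, i.e.\ that it respects the inclusion (boundary) relation, and then that the induced coloring, ordering and root make it a morphism in $\scrC_{d,k}$.

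The key step is verifying that $\Psi \circ \Phi^{-1}$ respects inclusion of cells. Because both $\Phi$ and $\Psi$ are coset-indexed, I would phrase inclusion purely group-theoretically: a $j$-cell $K_{\widehat J} g$ should be contained in a $j'$-cell $K_{\widehat{J'}} g'$ (with $J \subseteq J'$) in $\FX_{d,k}$ exactly when the corresponding cells $\Psi_j(K_{\widehat J}g)$ and $\Psi_{j'}(K_{\widehat{J'}}g')$ satisfy $\Psi_j(K_{\widehat J}g) \subseteq \Psi_{j'}(K_{\widehat{J'}}g')$ in $T_{d,k}$. On the $\FX_{d,k}$ side, inclusion is built into the definition of $\Phi$: the vertex set $\Phi(K_{\widehat J}g) = \{K_{\widehat i}g : i \in J\}$ of a $j$-cell is visibly contained in $\Phi(K_{\widehat{J'}}g')$ precisely when the coset representatives can be chosen compatibly, i.e.\ $gg'^{-1} \in K_{\widehat{J'}}$. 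On the $T_{d,k}$ side, Lemma \ref{lem:stabilized_lemma} gives exactly the corresponding statement, since the stabilizer of a cell of color $J$ is $K_{\widehat J}$, so that the cell $\Psi_j(K_{\widehat J}g)$ lies in $g.\CT$ with color $J$, and containment in a higher cell translates into the same coset condition. Thus inclusion matches on both sides, and $\Psi \circ \Phi^{-1}$ together with its inverse is simplicial.

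Once the simplicial isomorphism is in hand, transporting the structure is routine. I would \emph{define} the coloring on $\FX_{d,k}$ by pulling back $\Gamma$, i.e.\ declare the color of the cell $\Phi(K_{\widehat J}g)$ to be $J$ (which is well defined since, as shown in Corollary \ref{cor:bijection_of_cells_in_T_with_cosets}, the label $J$ is determined by the coset); by construction $\Gamma \circ (\Psi \circ \Phi^{-1}) $ equals this coloring, so colors are preserved. Similarly the root is taken to be $\Phi(K_{\widehat{\ls d\rs}}e)$, the cell mapping to $\CT$, and the $k$-ordering $\Om$ is transported cell by cell through the bijection, which is legitimate because $\Psi \circ \Phi^{-1}$ is a degree-preserving bijection on $(d-1)$-cells and their coboundaries. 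With these definitions the three morphism conditions of $\scrC_{d,k}$ hold essentially by fiat.

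The main obstacle I anticipate is the inclusion-matching step, and specifically the bookkeeping of coset representatives: one must be careful that the ``same'' $g$ can be used to express a low-dimensional cell and a $d$-cell containing it, which is exactly where Lemma \ref{lem:stabilized_lemma} (the identification $L_{\rho,\tau} = K_{\ls d\rs \setminus \Gamma(\rho)}$, independent of $\tau$) does the heavy lifting. The only genuine check is that the equivalences $J \subseteq J'$ and $gg'^{-1} \in K_{\widehat{J'}}$ characterize inclusion symmetrically on both the $\FX_{d,k}$ and $T_{d,k}$ sides; everything else is formal transport of structure. Since the theorem is explicitly flagged as a special case ($H = \langle e\rangle$) of the general correspondence proved later, I would keep this argument short and defer the fully general version to Sections \ref{sec:from_subgroup_to_elements_of_the_category}--\ref{sec:from_simplicial_complexes_to_subgroups_and_back}.
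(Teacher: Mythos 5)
Your proposal is correct, and its skeleton matches the paper's: the paper also proves this theorem by combining the two coset bijections, but it does so by running the general quotient construction of Section \ref{sec:from_subgroup_to_elements_of_the_category} for $H=\langle e\rangle$ and observing, after Theorem \ref{thm:from_subgroups_to_multicomplexes}, that the canonical morphism $\si\mapsto[\Psi^{-1}(\si)]_H$ degenerates to the bijection $\Psi^{-1}$. The one substantive difference is how the coloring, ordering and root on $\FX_{d,k}$ are obtained. You transport them through the bijection, which makes the morphism conditions tautological; the paper instead defines them intrinsically on the coset model ($\gamma_H(K_{\widehat{i}}g)=i$, the ordering $\alpha_i^l g\mapsto\alpha_i^{l+m}g$ on the coboundary of $K_{\widehat{i}}g$, root $e$) and then \emph{verifies} that the bijection intertwines them with $\Gamma$, $\Om$, $\CT$. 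Both readings satisfy the literal statement, but the paper's route is what justifies the word ``natural,'' and the ordering verification is precisely where the standing assumption that $\Om$ is $G_{d,k}$-invariant (equation \eqref{eq:invariant_ordering_defn}) enters; with pure transport of structure that point is invisible, and you would still owe the computation identifying the transported ordering with the coset formula if you want to use it later, as the paper does.

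Two smaller points on your inclusion-matching step. First, the coset criterion is misstated: $\Phi(K_{\widehat{J}}g)\subseteq\Phi(K_{\widehat{J'}}g')$ with $J\subseteq J'$ holds iff $K_{\widehat{i}}g=K_{\widehat{i}}g'$ for all $i\in J$, i.e.\ iff $gg'^{-1}\in\bigcap_{i\in J}K_{\widehat{i}}=K_{\widehat{J}}$ (equivalently, the two cosets admit the common representative $g'$); your condition $gg'^{-1}\in K_{\widehat{J'}}$ is sufficient but not necessary (take $d=2$, $J=\{0\}$, $J'=\{0,1\}$, $gg'^{-1}=\alpha_1$). The fix is harmless, since Lemma \ref{lem:stabilized_lemma} gives exactly $L_{\rho,\tau}=K_{\ls d\rs\setminus\Gamma(\rho)}=K_{\widehat{J}}$ on the $T_{d,k}$ side, so the corrected condition is the one that matches. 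Second, the word ``visibly'' hides the identity $\bigcap_{i\in J}K_{\widehat{i}}=K_{\widehat{J}}$, a genuine (if standard) fact about free products; it is also what makes $\Phi$ injective on $\CM^j$ and is the $H=\langle e\rangle$ instance of the intersection property of Subsection \ref{subsec:the_intersection_property}. Since you cite the paper's assertion that $\Phi|_{\CM^j}$ is a bijection this is acceptable, but it is the one place in the argument where something beyond bookkeeping is actually used.
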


Using the last construction of $T_{d,k}$ via $G_{d,k}$, we can also describe $T_{d,k}$ as a nerve complex.

\begin{theorem}[$T_{d,k}$ as a nerve complex]\label{thm:nerve_version_of_T_d_k}
	For $v=K_{\widehat{i}}\, g\in \CM^0$ define $\CA_v = \{g'\in G_{d,k} ~:~ K_{\widehat{i}}\, g=K_{\widehat{i}}g'\}$. Then, the nerve complex $\CN((\CA_v)_{v\in \CM^0})$	is a $d$-dimensional simplicial complex which is isomorphic to $T_{d,k}$. 	
%Furthermore, if $(T_{d,k},\Gamma,\Om,\CT)\in\scrC_{d,k}$ is a representative of the universal object of $\scrC_{d,k}$, then the isomorphism with $\FX_{d,k}$ preserves the coloring and ordering in a natural way, that is, there exists a bijective morphism of the category $\scrC_{d,k}$. 
\end{theorem}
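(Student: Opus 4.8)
The plan is to show that the nerve complex $\CN((\CA_v)_{v\in\CM^0})$ is \emph{literally equal}, as a simplicial complex on the vertex set $\CM^0$, to the complex $\FX_{d,k}$, and then to invoke Theorem~\ref{thm:j-cells_in_Tdk_as_cosets} to conclude isomorphism with $T_{d,k}$. The first thing I would record is a reformulation of the sets $\CA_v$: for $v=K_{\widehat{i}}g$, the condition $K_{\widehat{i}}g=K_{\widehat{i}}g'$ is equivalent to $g'\in K_{\widehat{i}}g$, so $\CA_v$ is nothing but the coset $v$ itself, viewed as a subset of $G_{d,k}$. In particular $\CA_v$ does not depend on the chosen representation of the coset, so it is well defined. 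Thus a finite set $\si=\{v_0,\ldots,v_j\}\subseteq\CM^0$ of distinct vertices is a face of the nerve if and only if the cosets $v_0,\ldots,v_j$ share a common element of $G_{d,k}$.

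For the inclusion of $\FX_{d,k}$ into the nerve, suppose $\si$ is a $j$-cell of $\FX_{d,k}$. By construction $\si=\Phi(K_{\widehat{J}}g)=\{K_{\widehat{i}}g : i\in J\}$ for some $g\in G_{d,k}$ and some $J\subseteq\ls d\rs$ with $|J|=j+1$; since $g$ lies in every coset $K_{\widehat{i}}g$, the element $g$ witnesses $\bigcap_{v\in\si}\CA_v\neq\emptyset$, so $\si$ is a face of the nerve.

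For the reverse inclusion I would argue as follows. Let $\si=\{v_0,\ldots,v_j\}$ be a face of the nerve and pick $h\in\bigcap_l \CA_{v_l}$. Writing each vertex as $v_l=K_{\widehat{i_l}}h$ (possible since $h$ lies in the coset $v_l$), I first note that the colors $i_0,\ldots,i_j$ are pairwise distinct: if $i_l=i_{l'}$ then $v_l=K_{\widehat{i_l}}h=K_{\widehat{i_{l'}}}h=v_{l'}$, contradicting that the vertices are distinct. Hence $J:=\{i_0,\ldots,i_j\}$ has exactly $j+1$ elements, and $\si=\{K_{\widehat{i}}h : i\in J\}=\Phi(K_{\widehat{J}}h)$ is a $j$-cell of $\FX_{d,k}$. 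This same argument controls the dimension: since there are only $d+1$ colors, any face has at most $d+1$ vertices, so the nerve has dimension at most $d$; as $\FX_{d,k}$ is $d$-dimensional, the nerve is exactly $d$-dimensional.

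The two complexes therefore have identical face sets on the common vertex set $\CM^0$, and Theorem~\ref{thm:j-cells_in_Tdk_as_cosets} yields $\CN((\CA_v)_{v\in\CM^0})=\FX_{d,k}\cong T_{d,k}$. There is no serious obstacle here: the whole content is the identification $\CA_v=v$ together with the observation that a common representative $h$ realizes the face as the color-orbit $\{K_{\widehat{i}}h : i\in J\}$. The only point requiring a little care is the distinctness of the colors, which rests on the fact that distinct subsets $\widehat{J}\subseteq\ls d\rs$ determine distinct subgroups $K_{\widehat{J}}$ of the free product $G_{d,k}$ --- exactly the well-definedness already established in the proof of Corollary~\ref{cor:bijection_of_cells_in_T_with_cosets}.
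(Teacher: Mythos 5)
Your proof is correct and follows essentially the same route as the paper: the paper establishes $\FX_{d,k}(H)=\CN(\CA)$ for an arbitrary subgroup $H$ (Theorem~\ref{thm:representation_as_a_nerve_complex}) by exactly your two inclusions — a common representative $g$ of the cosets witnesses the nonempty intersection, and conversely realizes a nerve face as $\Phi(K_{\widehat{J}}g)$ — and then specializes to $H=\langle e\rangle$ and invokes Theorem~\ref{thm:j-cells_in_Tdk_as_cosets}. Your identification $\CA_v=v$ and the explicit check that the colors $i_0,\ldots,i_j$ are distinct are fine (the latter is left implicit in the paper), so there is nothing to correct.
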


%**********************************************************************************************
%**********************************************************************************************

\section{From subgroups of $G_{d,k}$ to elements of $\scrC_{d,k}$}\label{sec:from_subgroup_to_elements_of_the_category}

The goal of this section is to introduce a method for constructing elements of $\scrC_{d,k}$ using subgroups of $G_{d,k}$. Let $H\leq G_{d,k}$ be a subgroup of $G_{d,k}$. Since $G_{d,k}$ is in bijection with the $d$-cells of $T_{d,k}$, and  $G_{d,k}$ acts on $T_{d,k}$ from the right by simplicial automorphisms which preserve coloring, ordering and the gluing of cells, one can define the quotient complex $T_{d,k}\qu H$ associated with $H$ as follows: Define the $0$-cells of $T_{d,k}\qu H$ to be the orbits of the action of $H$ (from the right) on the $0$-cells of $T_{d,k}$. Next, declare a set of $0$-cells in $T_{d,k}\qu H$ to form a cell in the quotient complex, if there exist representatives of the $0$-cells in $T_{d,k}^0$ in the corresponding orbits, which form a cell in $T_{d,k}$. The cells in $T_{d,k}\qu H$ arrive with a natural multiplicity and gluing, namely, the multiplicity of a cell $\si$ in $T_{d,k}\qu H$ is the number of choices of representatives for $0$-cells along the orbits up to an $H$ equivalence, and the gluing is induced from the inclusion relation in $T_{d,k}$. As it turns out, the resulting object in $\scrC_{d,k}$ is always link-connected, that is, it belongs to $\scrC_{d,k}^{lc}$. This fact is proven in Subsection \ref{subsec:quotient_is_link_connected} and is used later on to characterize each quotient complex as a certain universal object (see Section \ref{sec:from_simplicial_complexes_to_subgroups_and_back}).

%**********************************************************************************************

\subsection{Construction of the multicomplex from $G_{d,k}$}\label{subsec:the_construction}

We start by introducing the quotient multicomplex $T_{d,k}\qu H$, which is defined directly from the group $G_{d,k}$ and the subgroup $H$. Let $(T_{d,k},\Gamma,\Om,\CT)$ be a representative of the universal object of $\scrC_{d,k}$ and $H\leq G_{d,k}$. Recall the definition of the subgroups $K_{\widehat{J}}$ (Definition \ref{def:subgroup_K}) and of the sets $\CM^j$ (see Definition \ref{def:cosets_of_G_d_k}).

\begin{definition}[Equivalence classes of cosets of $G_{d,k}$]
	For $-1\leq j\leq d$, define an equivalence relation on cosets in $\CM^j$ by declaring $K_{\widehat{J}}g$ and $K_{\widehat{J'}}g'$ to be equivalent if $\{K_{\widehat{J}}gh ~:~ h\in H\} = \{K_{\widehat{J'}}g'h ~:~ h\in H\}$. For $K_{\widehat{J}}g\in \CM^j$ we denote by $[K_{\widehat{J}}g]_H$ the equivalence class of $K_{\widehat{J}}g$ and define
\begin{equation}
	\CM^j(H) = \{[K_{\widehat{J}}g]_H ~:~ g\in G_{d,k},\, J\subseteq \ls d\rs \text{ such that }|J|=j+1\},
\end{equation} 
Finally,  denote $\CM(H) = \bigcup_{j=-1}^d \CM^j(H)$.
\end{definition}

Note that, for $j=d$, one has $\{gh ~:~ h\in H\}=gH$, and therefore the equivalence classes $[g]_H$ are in correspondence with the left cosets $gH$, that is, one may identify $\CM^d(H)$ with $\{ gH ~:~ g\in G_{d,k}\}$. Hence $|\CM^d(H)| = [G:H]$.

\begin{lemma}\label{lem:well_defind_map_for_double_cosets}
	If $[K_{\widehat{J}} g]_H = [K_{\widehat{J'}}g']_H$, then $J=J'$. Furthermore, if $[K_{\widehat{J}} g]_H = [K_{\widehat{J}}g']_H$ and $I\subseteq J$, then $[K_{\widehat{I}}g]_H = [K_{\widehat{I}}g']_H$. In particular, $[K_{\widehat{i}}g]_H = [K_{\widehat{i}}g']_H$ for every $i\in J$. 
\end{lemma}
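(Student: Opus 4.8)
The plan is to reduce everything to two elementary facts about the subgroups $K_{\widehat{J}}$ inside the free product $G_{d,k}$. The first is that a coset of the form $K_{\widehat{J}}g$ determines the index set $J$: if $K_{\widehat{J}}g = K_{\widehat{J'}}g'$ as subsets of $G_{d,k}$, then $J = J'$. This is precisely the computation carried out in the well-definedness part of the proof of Corollary \ref{cor:bijection_of_cells_in_T_with_cosets}, where one shows that $K_{\widehat{J}}g = K_{\widehat{J'}}g'$ forces $K_{\widehat{J}} = K_{\widehat{J'}}$ and hence $\widehat{J} = \widehat{J'}$, using that distinct subsets of the free factors generate distinct subgroups. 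The second fact is a monotonicity statement: if $I \subseteq J$ then $\widehat{J} \subseteq \widehat{I}$, so $K_{\widehat{J}} \leq K_{\widehat{I}}$, and therefore any equality of $K_{\widehat{J}}$-cosets $K_{\widehat{J}}x = K_{\widehat{J}}y$ implies the coarser equality $K_{\widehat{I}}x = K_{\widehat{I}}y$.

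For the first assertion, suppose $[K_{\widehat{J}} g]_H = [K_{\widehat{J'}}g']_H$, that is, $\{K_{\widehat{J}}gh ~:~ h\in H\} = \{K_{\widehat{J'}}g'h ~:~ h\in H\}$ as sets of cosets. Taking $h=e$ on the left, the coset $K_{\widehat{J}}g$ lies in the right-hand set, so there is some $h\in H$ with $K_{\widehat{J}}g = K_{\widehat{J'}}g'h$. By the first fact this equality of plain cosets forces $J = J'$.

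For the second assertion, assume $[K_{\widehat{J}} g]_H = [K_{\widehat{J}}g']_H$ and $I\subseteq J$; I will show that the two right $H$-orbit sets defining $[K_{\widehat{I}}g]_H$ and $[K_{\widehat{I}}g']_H$ coincide. Fix $h\in H$. Since $K_{\widehat{J}}gh$ lies in $\{K_{\widehat{J}}gh'' ~:~ h''\in H\} = \{K_{\widehat{J}}g'h'' ~:~ h''\in H\}$, there is $h'\in H$ with $K_{\widehat{J}}gh = K_{\widehat{J}}g'h'$. Applying the monotonicity fact yields $K_{\widehat{I}}gh = K_{\widehat{I}}g'h'$, so $K_{\widehat{I}}gh \in \{K_{\widehat{I}}g'h'' ~:~ h''\in H\}$. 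This proves the inclusion $\{K_{\widehat{I}}gh ~:~ h\in H\} \subseteq \{K_{\widehat{I}}g'h ~:~ h\in H\}$; the reverse inclusion follows by the symmetric argument, whence $[K_{\widehat{I}}g]_H = [K_{\widehat{I}}g']_H$. The final assertion is the special case $I = \{i\}$ for $i\in J$.

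No step here is a genuine obstacle; the only point requiring care is to keep in mind that the equivalence relation is an equality of \emph{sets} of cosets (right $H$-orbits), not of individual cosets, so each inclusion must be verified by transporting an arbitrary orbit element through the coarsening map $K_{\widehat{J}}x \mapsto K_{\widehat{I}}x$. Both the coset-determines-$J$ fact and the coarsening fact are immediate from the free-product structure of $G_{d,k}$, and both have effectively already appeared in the proof of Corollary \ref{cor:bijection_of_cells_in_T_with_cosets}.
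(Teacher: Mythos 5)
Your proof is correct and follows essentially the same route as the paper's: both reduce the equality of $H$-orbits to an equality of plain cosets $K_{\widehat{J}}g = K_{\widehat{J'}}g'h$ for some $h\in H$ (using $e\in H$), deduce $K_{\widehat{J}}=K_{\widehat{J'}}$ and hence $J=J'$ from the free-product structure, and for the second claim use the containment $K_{\widehat{J}}\leq K_{\widehat{I}}$ when $I\subseteq J$. Your appeal to the well-definedness computation in the proof of Corollary \ref{cor:bijection_of_cells_in_T_with_cosets} is legitimate (that corollary precedes this lemma and does not depend on it), and your element-by-element verification of the orbit-set equality merely makes explicit a step the paper leaves implicit.
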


\begin{proof}
	Assume that $[K_{\widehat{J}} g]_H = [K_{\widehat{J'}}g']_H$. Then, $K_{\widehat{J}}g=K_{\widehat{J'}}g'h$ for some $h\in H$, and hence $g=wg'h$ for some $w\in K_{\widehat{J'}}$. Consequently, $K_{\widehat{J}}g = K_{\widehat{J'}}g' h = K_{\widehat{J'}}w^{-1}gh^{-1}h = K_{\widehat{J'}}g$, where for the last equality we used the fact that $w\in K_{\widehat{J'}}$. By multiplying both sides from the right by $g^{-1}$, we obtain $K_{\widehat{J}} = K_{\widehat{J'}}$ and therefore $J=J'$. 
	
	Assume next that $[K_{\widehat{J}}g]_H = [K_{\widehat{J}}g']_H$ and $I\subseteq J$. Then, there exists $h\in H$ such that $K_{\widehat{J}}g = K_{\widehat{J}}g'h$. As $K_{\widehat{J}}\subseteq K_{\widehat{I}}$, the same holds for $\widehat{I}$, i.e., $K_{\widehat{I}}g=K_{\widehat{I}}g'h$, which implies $[K_{\widehat{I}}g]_H = [K_{\widehat{I}}g']_H$. 
\end{proof}

Let $\Phi:\CM(H) \to \bigcup_{j=-1}^{d}\binom{\CM^0(H)}{j+1}$ be the map  
\begin{equation}\label{eq:defn_og_Phi_j}
		\Phi([K_{\widehat{J}}g]_H) = \{[K_{\widehat{i}}g]_H ~:~ i\in J\},\qquad \forall J\subseteq \ls d\rs,\, g\in G_{d,k}.
	\end{equation}
	Note that due to Lemma \ref{lem:well_defind_map_for_double_cosets} this map is indeed well defined. Also, note that, $\Phi(\CM^j(H))\subseteq \binom{\CM^0(H)}{j+1}$ and that the restriction of $\Phi$ to $\CM^0$ is the identity. 

\begin{definition}[The complex $\FX_{d,k}(H)$] For $H\leq G_{d,k}$, define $\FX^{-1}_{d,k}(H)=\{\emptyset\}$ and for $0\leq j\leq d$ let 
	\begin{equation}
		\FX^j_{d,k}(H) = \{\Phi([K_{\widehat{J}}g]_H) ~:~ [K_{\widehat{J}}g]_H\in \CM^j(H)\}.
	\end{equation}
Finally, set
	\begin{equation}
		\FX_{d,k}(H) = \bigcup_{j=-1}^d \FX^j_{d,k}(H).
	\end{equation}
\end{definition}

Note that $\FX_{d,k}^0(H) = \CM^0(H)$ since the restriction of $\Phi$ to $\CM^0(H)$ is the identity map.

\begin{lemma}[$\FX_{d,k}(H)$ has a complex structure]\label{lem:the_complex_FX_d,k(H)} Let $(T_{d,k},\Gamma,\Om,\CT)$ be a representative of the universal element and $H\leq G_{d,k}$. Then $\FX_{d,k}(H)$ is a pure $d$-complex which is colorable and $d$-lower path connected. 
\end{lemma}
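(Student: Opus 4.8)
The plan is to verify in turn each of the four assertions in Lemma~\ref{lem:the_complex_FX_d,k(H)}: that $\FX_{d,k}(H)$ is (a) a simplicial complex, (b) $d$-dimensional and pure, (c) colorable, and (d) $d$-lower path connected. The governing principle throughout is that $\FX_{d,k}(H)$ is the image of $T_{d,k}=\FX_{d,k}$ under the quotient by $H$, so each property should be transported from the corresponding (already-established) property of $T_{d,k}$, using the bijection $\CM^j\to T_{d,k}^j$ of Corollary~\ref{cor:bijection_of_cells_in_T_with_cosets} and its descent to equivalence classes via Lemma~\ref{lem:well_defind_map_for_double_cosets}.

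First I would check the complex structure. The key input is Lemma~\ref{lem:well_defind_map_for_double_cosets}: if $[K_{\widehat J}g]_H=[K_{\widehat{J'}}g']_H$ then $J=J'$, and moreover $[K_{\widehat J}g]_H=[K_{\widehat J}g']_H$ forces $[K_{\widehat I}g]_H=[K_{\widehat I}g']_H$ for every $I\subseteq J$. The second statement is exactly what guarantees that $\Phi$ is well defined and, more importantly, that $\FX_{d,k}(H)$ is closed under taking subsets: if $\Phi([K_{\widehat J}g]_H)=\{[K_{\widehat i}g]_H : i\in J\}$ is a $j$-cell and $I\subseteq J$, then $\{[K_{\widehat i}g]_H : i\in I\}=\Phi([K_{\widehat I}g]_H)$ is an $|I|-1$-cell, so every face of a cell is again a cell. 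I must also check that $\Phi|_{\CM^j(H)}$ lands in $\binom{\CM^0(H)}{j+1}$, i.e.\ that the $j+1$ vertices $[K_{\widehat i}g]_H$ are genuinely distinct; this follows because distinct colors $i\in J$ give distinct cosets $K_{\widehat i}g$ in $T_{d,k}$ (Corollary~\ref{cor:bijection_of_cells_in_T_with_cosets}), and the quotient map cannot identify two $0$-cells of the same $d$-cell that already have different colors. Purity and dimension are then inherited: every class $[K_{\widehat J}g]_H\in\CM^j(H)$ with $|J|=j+1\le d+1$ comes from a cell of $T_{d,k}$, which lies in some $d$-cell $g.\CT$, and the image of that $d$-cell under the quotient is a $d$-cell of $\FX_{d,k}(H)$ containing it, while no cell of dimension $>d$ appears since $|J|\le d+1$.

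Next, colorability. Here I would define the coloring $\widehat\Gamma:\FX_{d,k}^0(H)\to\ls d\rs$ by $\widehat\Gamma([K_{\widehat i}g]_H)=i$, which is well defined precisely by the first clause of Lemma~\ref{lem:well_defind_map_for_double_cosets} (equality of classes forces equality of the omitted-index set, hence of $i$). To see this is a legitimate coloring I must verify that the $d+1$ vertices of any $d$-cell receive distinct colors; but a $d$-cell is $\Phi([g]_H)=\{[K_{\widehat i}g]_H:i\in\ls d\rs\}$, whose vertices carry the colors $0,1,\dots,d$ by construction, so they are automatically distinct. This is where the partite (colorable) structure of $T_{d,k}$ descends cleanly: the coloring is read off directly from the index $i$ of $K_{\widehat i}$, which is an invariant of the equivalence class.

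Finally, $d$-lower path connectedness, which I expect to be \textbf{the main obstacle} and the only genuinely nontrivial part. The statement reduces to connectivity of the line graph $\scrG(\FX_{d,k}(H))$. My plan is to lift the problem to $T_{d,k}$: given two $d$-cells of $\FX_{d,k}(H)$, represented by classes $[g]_H$ and $[g']_H$, I would choose representatives $g.\CT$ and $g'.\CT$ in $T_{d,k}$ and use that $T_{d,k}$ is itself $d$-lower path connected (it is arboreal, hence its line graph is connected---indeed it is the Cayley graph $\mathrm{Cay}(G_{d,k};S)$ by Corollary~\ref{cor:bijection_of_G_and_T}). A path of $d$-cells $g.\CT=\tau(0),\tau(1),\dots,\tau(m)=g'.\CT$ in $T_{d,k}$, in which consecutive cells share a $(d-1)$-cell, projects to a walk in $\FX_{d,k}(H)$; the point requiring care is that consecutive projected cells still share a $(d-1)$-\emph{cell} of $\FX_{d,k}(H)$ rather than being accidentally identified. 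Concretely, if $\tau(r-1)\cap\tau(r)$ is the $(d-1)$-cell corresponding to $K_{\widehat i}$-type coset data, its image is a common codimension-one multicell of the projected $d$-cells, so the projected sequence is a legitimate path in the line graph of the quotient. Because the quotient map is surjective on $d$-cells (every class $[g']_H$ is hit by $g'.\CT$), surjectivity plus connectivity upstairs yields connectivity downstairs. I would phrase this carefully in terms of the containment relation on $\CM(H)$ rather than on the geometric complex, to avoid conflating the multicomplex with its underlying simplicial complex; the essential fact is simply that the projection $\CM^j\to\CM^j(H)$ commutes with the face maps encoded by $\Phi$, so it sends adjacent $d$-cells to adjacent $d$-cells.
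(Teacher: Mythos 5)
Your proposal is correct and follows essentially the same route as the paper: closure under faces and purity via the structure of the map $\Phi$, the coloring $[K_{\widehat i}g]_H\mapsto i$ made well defined by Lemma~\ref{lem:well_defind_map_for_double_cosets}, and $d$-lower path connectedness by transporting a path of $d$-cells between $g.\CT$ and $g'.\CT$ down to the quotient. The paper phrases the last step by writing $g'g^{-1}$ as a reduced word and multiplying letter by letter, which is the same as your walk in the line graph of $T_{d,k}$ via Corollary~\ref{cor:bijection_of_G_and_T}, including the same caveat that consecutive projected cells may coincide.
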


\begin{proof}
	Let $\si \in \FX_{d,k}(H)$ and $\rho\subseteq \si$. Due to the definition of $\FX_{d,k}(H)$, there exists $J\subseteq \ls d\rs$ and $g\in G_{d,k}$ such that $\si = \Phi([K_{\widehat{J}}g]_H)=\{[K_{\widehat{i}}g]_H ~:~ i\in J\}$. Hence $\rho = \{[K_{\widehat{i}}g]_H ~:~ i\in J'\}$ for some $J'\subset J$, and we obtain $\rho = \Phi([K_{\widehat{J'}}g]_H)$, that is, $\rho\in \FX_{d,k}(H)$. This completes the proof that $\FX_{d,k}(H)$ is a simplicial complex with vertex set $\CM^0(H)$. 
	
	A similar argument shows that $\FX_{d,k}(H)$ is pure. Indeed, let $\si\in \FX_{d,k}(H)$. As before, there exists $J\subseteq \ls d\rs$ and $g\in G_{d,k}$ such that $\si = \Phi([K_{\widehat{J}}g]_H)=\{[K_{\widehat{i}}g]_H ~:~ i\in J\}$. Hence, $\tau = \Phi([g]_H)=\{[K_{\widehat{i}}g]_H ~:~ i\in \ls d\rs\}$ is a $d$-cell containing $\si$. 
	
	Next, we show that $\FX_{d,k}(H)$ is colorable. Define $\gamma_H:\FX_{d,k}^0(H)\to \ls d\rs$ by $\gamma_H([K_{\widehat{i}}g]_H) = i$. Note that due to Lemma \ref{lem:well_defind_map_for_double_cosets} this is a well defined map. Since any $d$-cell of $\FX_{d,k}(H)$ is of the form $\Phi([g]_H)=\{[K_{\widehat{0}}g]_H,[K_{\widehat{1}}g]_H,\ldots,[K_{\widehat{d}}g]_H\}$ for some $g\in G_{d,k}$, it follows that the $0$-cells of any $d$-cell in $\FX_{d,k}(H)$ are colored by $(d+1)$ distinct colors. Hence $\gamma_H$ is a valid coloring, and in particular $\FX_{d,k}(H)$ is colorable. 
	
	Finally, we show that $\FX_{d,k}(H)$ is $d$-lower path connected. Let $\tau,\tau'\in \FX_{d,k}^d(H)$. Then $\tau=\Phi([g]_H)$ and $\tau'=\Phi([g']_H)$ for some $g,g'\in G_{d,k}$. Writing $g'g^{-1}$ as a reduced word $\alpha_{i_m}^{l_m}\ldots\alpha_{i_2}^{l_2}\alpha_{i_1}^{l_1}$ with $i_1,\ldots,i_m\in \ls d\rs$ and $l_1,\ldots,l_m\in [ k-1]$ and abbreviating $w_r=\alpha_{i_r}^{l_r}\ldots\alpha_{i_1}^{l_1}$ for $0\leq r\leq m$, we claim that the sequence $\tau=\tau(0),\tau(1),\ldots,\tau(m)=\tau'$ with $\tau(r)=\Phi([w_rg]_H)$ is a path from $\tau$ to $\tau'$ (up to the fact that it is possible for two consecutive $d$-cells in it to be the same). We only need to check that $\tau(j)\cap \tau(j-1)$ is either a $(d-1)$-cell or that $\tau(j-1)=\tau(j)$. To this end, note that for every $g\in G_{d,k}$, every $i\in \ls d\rs$ and every $l\in [ k-1]$
	\begin{equation}
		\Phi([g]_H)\cap \Phi([\alpha_i^l g]_H) = \{[K_{\widehat{0}}g]_H,\ldots,[K_{\widehat{d}}g]_H\}\cap \{[K_{\widehat{0}}\alpha_i^lg]_H,\ldots,[K_{\widehat{d}}\alpha_i^l g]_H\}.
	\end{equation}
	Using the definition of the subgroups $K_{\widehat{r}}$ (see Definition \ref{def:subgroup_K}) we conclude that $[K_{\widehat{r}}g]_H = [K_{\widehat{r}}\alpha_i^l g]_H$ for every $r\neq i$. Hence $\Phi([g]_H)\cap \Phi([\alpha_i^l g]_H)\in \FX_{d,k}^{d-1}(H)$ if $[K_{\widehat{i}}g]_H \neq [K_{\widehat{i}}\alpha_i^l g]_H$ and $\Phi([g]_H)= \Phi([\alpha_i^l g]_H)$ if $[K_{\widehat{i}}g]_H = [K_{\widehat{i}}\alpha_i^l g]_H$, thus completing the proof. 
\end{proof}

Using the simplicial complex $\FX_{d,k}(H)$, we turn to construct an element of $\scrC_{d,k}$ whose underlying simplicial complex is $\FX_{d,k}(H)$, and is equivalent to the quotient multicomplex described at the beginning of the section. In order to describe the multicomplex it remains to define its multiplicity $\sm_{H}$ and gluing $\sg_H$ functions, as well as its ordering and root. The multiplicity associated with the complex, arises from the fact that although the map $\Phi$ is onto, it is in general not injective, that is, it is possible that $|\Phi^{-1}(\si)|>1$ for some $\si\in \FX_{d,k}(H)$ (for further discussion of this fact see Subsection \ref{subsec:the_intersection_property}).

\begin{definition}[Multiplicity]\label{defn:multiplicity_for_H}
	Define the multiplicity function $\sm_H:\FX_{d,k}(H) \to \BN$ by
	\begin{equation}
		\sm_H(\si) = |\Phi^{-1}(\si)|,\qquad \forall \si\in \FX_{d,k}(H).
	\end{equation}
	Note that by the definition of the complex $\FX_{d,k}(H)$, it is always the case that $\sm(v)=1$ for $v\in \FX_{d,k}^0(H)$. 
\end{definition}

Since the multiplicity is manifested by the map $\Phi$ and the relation between $\CM(H)$ and $\FX_{d,k}(H)$, it is more natural to use the elements of $\CM(H)$ as ``indexes'' for the multicells associated with $\FX_{d,k}(H)$ instead of the natural numbers\footnote{If one insists on working with the set $\FX_{d,k}(H)_{\sm_H}$  instead of $\CM(H)$, this can be done by fixing an arbitrary map $F:\CM(H)\to \FX_{d,k}(H)_{\sm_H}$ such that $F$ restricted to $\Phi^{-1}(\si)$ is a bijection from $\Phi^{-1}(\si)$ to $\FX_{d,k}(H)_{\sm_H}(\si)$, for every $\si\in \FX_{d,k}(H)$.}. Hence, from here onward, we use $\CM^j$ to denote the $j$-multicells of the multicomplex associated with $\FX_{d,k}(H)$, for $1\leq j\leq d$. 

\begin{definition}[Gluing]
	Define the gluing function, $\sg_H: \{(\Fa,\si)\in \CM(H)\times \FX_{d,k}(H) ~:~ \si\in\partial \Phi(\Fa)\} \to \CM(H)$ as follows: Note that for $\Fa=[K_{\widehat{J}}g]_H$ and $\si\in \partial\Phi(\Fa)$, there exists a unique $l\in J$, such that $\si = \Phi([K_{\widehat{J\setminus l}}g]_H)$. Then, set
	\begin{equation}
		\sg_H([K_{\widehat{J}}g]_H,\si) = [K_{\widehat{J\setminus l}}g]_H,
	\end{equation}
	for this unique $l\in J$. 
\end{definition}
The gluing function is well defined due to Lemma \ref{lem:well_defind_map_for_double_cosets}. Furthermore, recalling the definition of a multiboundary, our choice of gluing gives 
\begin{equation}
	\partial^m [K_{\widehat{J}}g]_H = \big\{[K_{\widehat{J\setminus l}} g]_H ~:~ l\in J\big\}.
\end{equation} 
Throughout the remainder of the paper we denote by $\widetilde{\FX}_{d,k}(H)$ the multicomplex $(\FX_{d,k}(H),\sm_H,\allowbreak\sg_H)$. Note that $\widetilde{\FX}_{d,k}(H)$ is indeed a multicomplex as it satisfies the consistency condition. Indeed, let $[K_{\widehat{J}}g]_H$ be a multicell in $\widetilde{\FX}_{d,k}(H)$, and let $[K_{\widehat{I}}g]_H,[K_{\widehat{I'}}g]_H$ be a pair of multicells contained in it (that is, $I,I'\subseteq J$) satisfying $|I|=|I'|$. If in addition the corresponding cells satisfy $\si:=\Phi([K_{\widehat{I}}g]_H)\cap \Phi([K_{\widehat{I'}}g]_H)\in \FX_{d,k}^{|I|-2}(H)$, then $\sg_H([K_{\widehat{I}}g]_H,\si)=[K_{\widehat{I\cap I'}}g]_H=\sg_H([K_{\widehat{I'}}g]_H,\si)$, that is, the gluing is consistent. 

\begin{claim}\label{clm:the_comulticells_of_a_d-1_multicell}
	For every $i\in \ls d\rs$ and $g\in G_{d,k}$, the $d$-multicells containing the $(d-1)$-multicell $[K_i g]_H$ are $\{[\alpha_i^l g]_H ~:~ l\in \ls k-1 \rs\}$. In particular, the degree of any $(d-1)$-multicell is at most $k$. 
\end{claim}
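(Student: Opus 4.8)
The plan is to compute directly the set $\delta([K_i g]_H)$ of $d$-multicells having $[K_i g]_H$ in their multiboundary, using the explicit description of $\partial^m$ recorded just above the statement.

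First I would recall that every $d$-multicell is of the form $[g']_H$ with $g'\in G_{d,k}$ (here $J=\ls d\rs$, so $K_{\widehat{J}}=K_\emptyset=\{e\}$), and that for $j=d$ the class $[g']_H$ is identified with the left coset $g'H$. Applying the multiboundary formula $\partial^m[K_{\widehat J}g]_H=\{[K_{\widehat{J\setminus l}}g]_H : l\in J\}$ with $J=\ls d\rs$ gives
\begin{equation}
	\partial^m[g']_H = \{[K_{\widehat{\{l\}}}g']_H : l\in\ls d\rs\} = \{[K_l g']_H : l\in\ls d\rs\},
\end{equation}
so the $(d-1)$-multicells in the boundary of $[g']_H$ are precisely the $[K_l g']_H$, exactly one of each color $\ls d\rs\setminus\{l\}$. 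Since by Lemma \ref{lem:well_defind_map_for_double_cosets} the color $\ls d\rs\setminus\{i\}$ of the $(d-1)$-multicell $[K_i g]_H$ is an invariant of its equivalence class, the membership $[K_i g]_H\in\partial^m[g']_H$ holds if and only if $[K_i g]_H$ coincides with the boundary multicell of the same color, i.e.\ if and only if $[K_i g']_H=[K_i g]_H$.

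Next I would unwind this equality through the definition of the equivalence relation. By definition $[K_i g']_H=[K_i g]_H$ means $\{K_i g'h : h\in H\}=\{K_i gh : h\in H\}$, which is equivalent to $K_i g'=K_i g h$ for some $h\in H$. Since $K_i=\langle\alpha_i\rangle=\{\alpha_i^l : l\in\ls k-1\rs\}$, this says exactly that $g'=\alpha_i^l g h$ for some $l\in\ls k-1\rs$ and $h\in H$; passing to left cosets modulo $H$ (recall $d$-multicells are left cosets) this reads $[g']_H=[\alpha_i^l g]_H$. Conversely each $[\alpha_i^l g]_H$ does contain $[K_i g]_H$, since $K_i\alpha_i^l g=K_i g$ yields $[K_i\alpha_i^l g]_H=[K_i g]_H$. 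Hence
\begin{equation}
	\delta([K_i g]_H) = \{[\alpha_i^l g]_H : l\in\ls k-1\rs\},
\end{equation}
a set of cardinality at most $|\ls k-1\rs|=k$, which gives $\deg([K_i g]_H)\le k$ and proves the claim.

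I expect no serious obstacle here; the one point requiring care is the bookkeeping of colors—ensuring that ``the boundary multicell of color $\ls d\rs\setminus\{i\}$'' is well defined and equals $[K_i g']_H$, which is exactly what Lemma \ref{lem:well_defind_map_for_double_cosets} supplies—together with the careful passage between the coset description $K_i g'$ used in the equivalence relation and the left-coset description $g'H$ of the $d$-multicells.
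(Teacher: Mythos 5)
Your proof is correct and follows essentially the same route as the paper: both directions are established by the same coset manipulation, showing that $[g']_H$ contains $[K_i g]_H$ exactly when $g'=\alpha_i^l g h$ for some $l\in\ls k-1\rs$ and $h\in H$, hence $[g']_H=[\alpha_i^l g]_H$. The only difference is that you spell out the color bookkeeping (via Lemma \ref{lem:well_defind_map_for_double_cosets}) that the paper leaves implicit in the phrase ``it follows from the definition of the gluing function.''
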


\begin{remark}
	In fact, combining Claim \ref{clm:degree_divides_k} and Lemma \ref{lem:om_H_is_a_good_ordering}, one can show that the degree of any $(d-1)$-cell divides $k$. 
\end{remark}

\begin{proof}
	Let $\Fa=[K_i g]_H\in \CM^{d-1}(H)$ be a $(d-1)$-multicell. It follows from the definition of the gluing function that $[\alpha_i^lg]_H$ for $l\in \ls k-1\rs$ are $d$-multicells containing $\Fa$. If $[g']_H$ is a $d$-multicell containing $\Fa$, then $[K_ig']_H = [K_ig]_H=\Fa$. In particular, $g'=\alpha_i^l gh_0$ for some $h_0\in H$ and $l\in \ls k-1\rs$, and hence $\{g'h ~:~ h\in H\} = \{\alpha_i^l gh_0h ~:~ h\in H\}=\{\alpha_i^l gh ~:~ h\in H\}$, that is, $[g']_H = [\alpha_i^l g]_H$. This proves that there are no other $d$-multicells containing the $(d-1)$-multicell $[K_{\widehat{i}}g]_H$. 
\end{proof}

Next, we turn to define an ordering for the multicomplex $\widetilde{\FX}_{d,k}(H)$. The ordering of $\widetilde{\FX}_{d,k}(H)$ is induced from the ordering of $(T_{d,k},\Gamma,\Om,\CT)$. 

\begin{definition}[Ordering]
	We define $\om_H=(\om_{\Fb})_{\Fb\in \CM^{d-1}(H)}$ as follows. Given a $(d-1)$-multicell $[K_i g]_H\in \CM^{d-1}(H)$ and a $d$-multicell $[\alpha_i^l g]_H\in \CM^d(H)$ containing it, define 
\begin{equation}\label{eq:ordering_rewritten}
	\om_{[K_i g]_H}(\alpha_i^m).[\alpha_i^l g]_H = [\alpha_i^{l+m}g]_H,\qquad \forall m\in\ls k-1\rs.
\end{equation}
\end{definition}

\begin{lemma}\label{lem:om_H_is_a_good_ordering}
	$\om_H$ is a valid $k$-ordering for $\widetilde{\FX}_{d,k}(H)$. 
\end{lemma}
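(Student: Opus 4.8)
The plan is to verify, for each $(d-1)$-multicell $\Fb=[K_ig]_H\in \CM^{d-1}(H)$, the three defining requirements of a $k$-ordering: that $\om_\Fb$ is a well-defined map into $\CS_{\delta(\Fb)}$, that it is a homomorphism from $C_k$, and that it is transitive. The one piece of external input I would lean on is Claim \ref{clm:the_comulticells_of_a_d-1_multicell}, which identifies $\delta(\Fb) = \{[\alpha_i^l g]_H ~:~ l\in \ls k-1\rs\}$, together with the identification of $d$-multicells $[x]_H$ with left cosets $xH$ recorded after the definition of $\CM^d(H)$. In particular the image $[\alpha_i^{l+m}g]_H$ appearing in \eqref{eq:ordering_rewritten} automatically lies in $\delta(\Fb)$, so $\om_\Fb(\alpha_i^m)$ does map $\delta(\Fb)$ into itself.

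The crux, and the step I expect to be the main obstacle, is well-definedness: the formula \eqref{eq:ordering_rewritten} refers both to a representative $g$ of $\Fb$ and to an index $l$ presenting the $d$-multicell $\Fa=[\alpha_i^l g]_H$, and neither choice is unique. I would dispatch both ambiguities in a single computation. Suppose $[\alpha_i^l g]_H = [\alpha_i^{l'} g']_H$ are two presentations of the same $d$-multicell, relative to possibly different representatives $g,g'$ of $\Fb$. Since $d$-multicells are left cosets, this says $\alpha_i^l g H = \alpha_i^{l'} g' H$, so $g' = \alpha_i^{l-l'} g h$ for some $h\in H$. Then $\alpha_i^{l'+m} g' = \alpha_i^{l+m} g h$, whence $\alpha_i^{l'+m} g' H = \alpha_i^{l+m} g H$, i.e. $[\alpha_i^{l'+m} g']_H = [\alpha_i^{l+m} g]_H$. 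This shows the output of $\om_\Fb(\alpha_i^m)$ on $\Fa$ is independent of the representative of $\Fb$ and of the index presenting $\Fa$, so $\om_\Fb(\alpha_i^m)$ is a genuine self-map of $\delta(\Fb)$.

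With well-definedness secured, the remaining properties are routine. The map $\om_\Fb(\alpha_i^m)$ is a bijection because $\om_\Fb(\alpha_i^{-m})$ inverts it. For the homomorphism property I would note that applying the shift by $m_2$ and then by $m_1$ sends $[\alpha_i^l g]_H$ to $[\alpha_i^{l+m_1+m_2} g]_H$, which matches $\om_\Fb(\alpha_i^{m_1+m_2})$; combined with $\om_\Fb(\alpha_i^0)=\mathrm{id}$ and the relation $\alpha_i^k=e$ (so $[\alpha_i^{l+k}g]_H = [\alpha_i^l g]_H$), this yields a well-defined homomorphism $C_k\to \CS_{\delta(\Fb)}$. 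Transitivity is immediate: for $\Fa=[\alpha_i^l g]_H$ and $\Fa'=[\alpha_i^{l'} g]_H$ in $\delta(\Fb)$ one has $\om_\Fb(\alpha_i^{l'-l}).\Fa = \Fa'$. Carrying out these verifications uniformly over all $\Fb\in \CM^{d-1}(H)$ establishes that $\om_H$ is a valid $k$-ordering for $\widetilde{\FX}_{d,k}(H)$.
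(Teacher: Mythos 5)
Your proposal is correct and follows essentially the same route as the paper: both reduce everything to Claim \ref{clm:the_comulticells_of_a_d-1_multicell} plus a direct coset computation for well-definedness, with the homomorphism and transitivity properties then being immediate. Your handling of the two ambiguities (representative of $\Fb$ and exponent presenting $\Fa$) in a single computation is a slightly tidier packaging of the same argument.
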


\begin{proof}
	We start by showing that $\om_H$ is well defined. Assume first that $[K_i g]_H=[K_i g']_H$ are two representatives of the same $(d-1)$-multicell and note that in this case $g'=\alpha_i^r gh$ for some $r\in \ls k-1\rs$ and $h\in H$. In addition, the $d$-multicell $[\alpha_i^l g]_H$ represented in terms of $g'$ is given by $[\alpha_i^l g]_H = [\alpha_i^l \alpha_i^{-r}g'h^{-1}]_H = [\alpha_i^{l-r} g']_H$. Thus it is enough to show that $\om_{[K_ig]_H}(\alpha_i^m).[\alpha_i^l g]_H = \om_{[K_ig']_H}(\alpha_i^m).[\alpha_i^{l-r} g']_H$. This is indeed the case, since 
	\begin{equation}
	\begin{aligned}
		\om_{[K_ig]_H}(\alpha_i^m).[\alpha_i^l g]_H &= [\alpha_i^{l+m}g]_H = [\alpha_i^{l+m}\alpha_i^{-r}g'h]_H  \\
		&= [\alpha_i^{l+m-r}g']_H = \om_{[K_ig']_H}(\alpha_i^m).[\alpha_i^{l-r} g']_H.
	\end{aligned}
	\end{equation}
	
	It remains to show that $\om_H$ is indeed an ordering, that is, for every $(d-1)$-multicell $[K_ig]_H$, the map $\om_{[K_i g]_H}:C_k\to \CS_{\delta([K_ig]_H)}$ is a transitive homomorphism. The fact that $\om_{[K_i g]_H}$ is a homomorphism follows from the fact that it is defined using the action of $G_{d,k}$. As for the fact that $\om_{[K_ig]_H}$ is transitive, it follows from by observing that for any $(d-1)$-multicell $[K_{\widehat{i}}g]_H$, one can obtain any $d$-multicell of the form $[\alpha_i^m g]_H$ when starting from the $d$-multicells $gH$ by applying $\om_{[K_ig]_H}(\alpha_i^m)$, and noting that those are all the $d$-multicells containing $[K_{\widehat{i}}g]_H$ (see Claim \ref{clm:the_comulticells_of_a_d-1_multicell}).
\end{proof}

Finally, we define the root to be the $d$-multicell $[e]_H$. Combining all of the above we conclude that $(\widetilde{\FX}_{d,k}(H),\gamma_H,\om_H,[e]_H)$ is an object in the category $\scrC_{d,k}$. 

% ************************************************************************************

\subsection{The multicomplexes $\widetilde{\FX}_{d,k}$ as quotients of the universal object}\label{subsec:proof_of_thm_from_groups_to_SC}

Having completed the construction of the object $(\widetilde{\FX}_{d,k}(H),\gamma_H,\om_H,[e]_H)\in\scrC_{d,k}$ associated with a subgroup $H$, we turn to discuss the unique morphism in the sense of the category $\scrC_{d,k}$ from the universal object to it. This morphism is in fact the promised quotient map from $T_{d,k}$ to $T_{d,k}\qu H$. In particular, we prove that the multicomplex associated with the subgroup $H=\langle e\rangle$ is simply the universal object itself (thus proving Theorem \ref{thm:j-cells_in_Tdk_as_cosets}). 

\begin{theorem}[The quotient map] \label{thm:from_subgroups_to_multicomplexes} Let $(T_{d,k},\Gamma,\Om,\CT)$ be a representative of the universal element of $\scrC_{d,k}$ and $H\leq G_{d,k}$. Then the unique morphism from $(T_{d,k},\Gamma,\Om,\CT)$ to $(\widetilde{\FX}_{d,k}(H),\gamma_H,\om_H,\allowbreak[e]_H)$ is given by the map $\si \to [\Psi^{-1}(\si)]_H$, where $\Psi$ is the bijection defined in Corollary \ref{cor:bijection_of_cells_in_T_with_cosets}. 
\end{theorem}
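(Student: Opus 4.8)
The plan is to exploit the universal property directly. By Proposition \ref{prop:universal_property} the object $(T_{d,k},\Gamma,\Om,\CT)$ admits a \emph{unique} morphism into $(\widetilde{\FX}_{d,k}(H),\gamma_H,\om_H,[e]_H)$, so it suffices to verify that the explicit assignment
\[
\widetilde{\varphi}:\si\longmapsto [\Psi^{-1}(\si)]_H
\]
is a morphism in $\scrC_{d,k}$; uniqueness then forces it to be \emph{the} morphism, which is by construction the quotient map $T_{d,k}\qu H$. The map is well defined on the multicells of $T_{d,k}$ (which, having no multiplicity, are just its cells), since $\Psi$ is a bijection by Corollary \ref{cor:bijection_of_cells_in_T_with_cosets} and $[\cdot]_H$ is a well-defined equivalence class. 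Throughout I identify the multicells of $\widetilde{\FX}_{d,k}(H)$ with $\CM(H)$, so that the forgetful map reads $\iota([K_{\widehat{J}}g]_H)=\Phi([K_{\widehat{J}}g]_H)$.

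First I would dispose of the root and the coloring. For the root, $\CT=e.\CT=\Psi_d(e)$, so $\Psi^{-1}(\CT)=e$ and $\widetilde{\varphi}(\CT)=[e]_H$, as required. For the coloring, if $\Gamma(\si)=J$ then $\Psi^{-1}(\si)=K_{\widehat{J}}g$ for some $g$, and the underlying cell $\Phi([K_{\widehat{J}}g]_H)=\{[K_{\widehat{i}}g]_H:i\in J\}$ is colored by $\{\gamma_H([K_{\widehat{i}}g]_H):i\in J\}=J$ by the definition of $\gamma_H$; hence $\gamma_H\circ\widetilde{\varphi}=\Gamma$. Next I would check that $\widetilde{\varphi}$ is a simplicial multimap with base map $\varphi(\si)=\Phi([\Psi^{-1}(\si)]_H)$. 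Fixing a $j$-cell $\si$ with $\Gamma(\si)=J$ and a $g$ with $\Psi^{-1}(\si)=K_{\widehat{J}}g$, every face $\rho\subseteq\si$ of color $J'\subseteq J$ satisfies $\Psi^{-1}(\rho)=K_{\widehat{J'}}g$ with the \emph{same} $g$, by the definition of $\Psi$ in Corollary \ref{cor:bijection_of_cells_in_T_with_cosets}; consequently $\varphi$ carries the vertices of $\si$ to the vertices of $\Phi([K_{\widehat{J}}g]_H)$ and is simplicial. Gluing is preserved because, for $\rho\in\partial\si$ obtained by deleting the color $l$, one has $\Psi^{-1}(\rho)=K_{\widehat{J\setminus l}}g$, and the definition of $\sg_H$ gives $\sg_H([K_{\widehat{J}}g]_H,\varphi(\rho))=[K_{\widehat{J\setminus l}}g]_H=\widetilde{\varphi}(\rho)$, which equals $\widetilde{\varphi}(\sg(\si,\rho))$ since $T_{d,k}$ carries the trivial gluing.

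The main obstacle, and the step requiring the most care, is preservation of the ordering, namely $\widetilde{\varphi}(\Om_\si(\beta).\tau)=(\om_H)_{\widetilde{\varphi}(\si)}(\beta).\widetilde{\varphi}(\tau)$ for $\si\in T_{d,k}^{d-1}$, $\tau\in\delta(\si)$ and $\beta\in C_k$. Here I would use that the left action of $G_{d,k}$ (Section \ref{sec:group_action}) realizes $\Om$, so that if $\si$ has color $\widehat{i}$, $\tau=\Psi_d(g)=g.\CT$ and $\beta=\alpha_i^l$, then $\Om_\si(\alpha_i^l).\tau=\alpha_i^l.(g.\CT)=\Psi_d(\alpha_i^l g)$, while $\Psi^{-1}(\si)=K_ig$. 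Applying $\widetilde{\varphi}$ gives the left-hand side $[\alpha_i^l g]_H$. On the right-hand side $\widetilde{\varphi}(\si)=[K_ig]_H$ and $\widetilde{\varphi}(\tau)=[g]_H=[\alpha_i^0 g]_H$, so the defining formula \eqref{eq:ordering_rewritten} for $\om_H$ yields $(\om_H)_{[K_ig]_H}(\alpha_i^l).[\alpha_i^0 g]_H=[\alpha_i^l g]_H$, matching the left-hand side. The delicate point is the bookkeeping: one must keep the \emph{single} group element $g$ (rather than merely its coset) fixed simultaneously across the left action, the ordering $\Om$, and the quotient $[\cdot]_H$. Once this is done, all four morphism conditions hold, and the uniqueness in Proposition \ref{prop:universal_property} identifies $\widetilde{\varphi}$ as the unique morphism.
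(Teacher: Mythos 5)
Your proof is correct and follows essentially the same strategy as the paper's: write down the explicit map $\si\mapsto[\Psi^{-1}(\si)]_H$, check it is a morphism (well-definedness, simplicial multimap, root, coloring, ordering), and let the uniqueness in Proposition \ref{prop:universal_property} do the rest. The one place you diverge is the ordering check: the paper rewrites $\si=\Si_i.g$ and $\tau$ via the right action and invokes the standing assumption that $\Om$ is $G_{d,k}$-invariant (equation \eqref{eq:invariant_ordering_defn}), whereas you use only the definitional identity $\Om_\si(\alpha_i^l).\tau=\alpha_i^l.\tau$, the simple transitivity of the left action (Lemma \ref{lem:the_action_of_G_on_T_is_free}), and the defining formula \eqref{eq:ordering_rewritten} for $\om_H$ to get $[\alpha_i^lg]_H$ on both sides; this is a legitimate, slightly more self-contained route to the same conclusion.
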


Note that in the case $H=\langle e \rangle$ the resulting morphism from 
$(T_{d,k},\Gamma,\Om,\CT)$ to $(\widetilde{\FX}_{d,k}(e),\gamma_e,\om_e,e)$ is simply the map $\Psi^{-1}$, which by Corollary \ref{cor:bijection_of_cells_in_T_with_cosets} is a bijection. Hence, $(T_{d,k},\Gamma,\Om,\CT)$ and $(\widetilde{\FX}_{d,k}(e),\gamma_e,\om_e,e)$ are isomorphic in the category $\scrC_{d,k}$, which completes the proof of Theorem \ref{thm:j-cells_in_Tdk_as_cosets}.

\begin{proof}
	Denote by $\Pi_H:T_{d,k}\to \CM(H)$ the map $\Pi_H(\si)=[\Psi^{-1}(\si)]_H$ and note that $\Pi_H$ is the composition of the map $\Psi^{-1}$ with the quotient map by $H$ induced from the equivalence relation (see Figure \ref{fig:main_proof}). 
\begin{figure}[h]
	\begin{center}
	\includegraphics[scale=0.5]{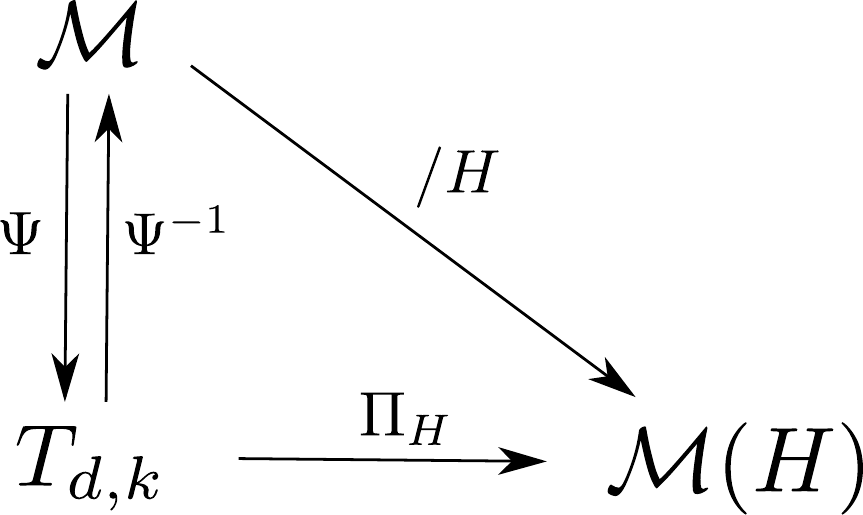}
	\caption{The map $\Pi_H$ is a composition of the map $\Psi^{-1}$ and the quotient map by $H$.\label{fig:main_proof}}
	\end{center}
\end{figure}
In particular, this implies that $\Pi_H$ is well defined. Indeed, for every $\si\in T_{d,k}$, there exists a unique coset $K_{\widehat{J}}g$ such that $\Psi(K_{\widehat{J}}g)=\si$. Furthermore, if $K_{\widehat{J}}g=K_{\widehat{J'}}g'$, then by definition $[K_{\widehat{J}}g]_H=[K_{\widehat{J'}}g']_H$ which proves that the map is well defined. 

Next, we show that $\Pi_H$ is a simplicial multimap. Since $T_{d,k}$ is a complex, and in particular has no multiplicity in the cells, it suffices to show that $\Pi_H(\si) = \{\Pi_H(v_0),\ldots,\Pi_H(v_j)\}\in \CM(H)$ for every $\si=\{v_0,\ldots,v_j\}\in T_{d,k}$. Let $\si$ be as above and assume that $\Psi^{-1}(\si)=K_{\widehat{J}}g$ with $J=\{i_0,\ldots,i_j\}$ and $g\in G_{d,k}$. Due to the definition of $\Psi$ (see Corollary \ref{cor:bijection_of_cells_in_T_with_cosets}) the color of $\si$ is $J$ and hence, without loss of generality, we can assume that the color of $v_r$ is $i_r$ for $0\leq r\leq j$. Using the definition of $\Psi$ once more, together with the fact that $v_r$ is the unique $0$-cell in $\si$ of color $i_r$, we obtain that $\Psi^{-1}(v_r)=K_{\widehat{i_r}}g$ for every $0\leq r\leq j$, and hence $\Pi_H(v_r)=[K_{\widehat{i_r}}g]_H$. Recalling that the $0$-cells of the multicell $\Pi_H(\si)=[K_{\widehat{J}}g]_H$ are $\{[K_{\widehat{i_0}}g]_H,\ldots,[K_{\widehat{i_j}}g]_H\}$ by definition (see the definition of $\Phi$), this completes the proof that $\Pi_H$ is a simplicial multimap. 

It remains to show that $\Pi_H$ preserves the coloring, the root and the ordering. Starting with the coloring, the previous argument implies that for every $\si\in T_{d,k}$ such that $\Psi^{-1}(\si)=K_{\widehat{J}}g$, with $J=\{i_0,\ldots,i_j\}$, the color of $\si$ is $J$. However, the color of the multicell $[K_{\widehat{J}}g]_H$ is defined to be $J$ as well (see Lemma \ref{lem:the_complex_FX_d,k(H)} for the definition of $\gamma_H$). The map $\Pi_H$ preserves the root since $\Psi(e)=\CT$, which implies that  $\Pi_H(\CT)=[e]_H$. Finally, turning to deal with the ordering, let $\si\in T_{d,k}^{d-1}$ and $\tau\in T_{d,k}^d$ such that $\si\subset \tau$. Assume further that $\Psi^{-1}(\si)=K_i g$, which, by the definition of the map $\Psi$, implies that $\Psi^{-1}(\tau)=\alpha_i^lg$ for some $l\in \ls k-1\rs$. Since the action of $G_{d,k}$ on $\CT$ from the left and right is the same (as it corresponds to the unit element in $G_{d,k}$) it follows that $\si=\Si_i.g$ and $\tau=\CT.\alpha_i^lg$, where $\Si_i$ is the unique $(d-1)$-cell of $\CT$ of color $\widehat{i}$. Hence, using the assumption that the ordering $\Om$ is invariant (see \eqref{eq:invariant_ordering_defn}) for every $m\in \ls k-1\rs$
\begin{equation}
\begin{aligned}
	\Pi_H(\Om_\si(\alpha_i^m).\tau) &= \Pi_H(\Om_{\Si_i.g}(\alpha_i^m).(\CT.\alpha_i^lg)) = 
	\Pi_H([\Om_{\Si_i}(\alpha_i^m).\CT].\alpha_i^lg) \\
	&= \Pi_H(\CT.\alpha_i^m\alpha_i^lg) = \Pi_H(\alpha_i^{l+m}g.\CT)=[\alpha_i^{l+m}g]_H \\
	& = \om_{K_igH}(\alpha_i^m).[\alpha_i^lg]_H = \om_{\Pi_H(\si)}(\alpha_i^m).\Pi_H(\tau)
\end{aligned}
\end{equation}
as required. 
\end{proof}

% ************************************************************************************

\subsection{Link-connectedness of quotient complexes}\label{subsec:quotient_is_link_connected}

\begin{proposition}\label{prop:quotients_are_link_connected}
	For every $H\leq G_{d,k}$, the multicomplex $\widetilde{\FX}_{d,k}(H)$ is link-connected. 
\end{proposition}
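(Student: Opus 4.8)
The plan is to realize every link of $\widetilde{\FX}_{d,k}(H)$ as a surjective image of a link of $T_{d,k}$, and then to transfer connectivity along this surjection. Fix a multicell $\Fa\in \FX_{d,k}(H)^{(d-2)}_\sm$, and write it as $\Fa=[K_{\widehat{J}}g]_H$ with $|J|=j+1\le d-1$. Let $\rho=\Psi(K_{\widehat{J}}g)\in T_{d,k}^{j}$ be the corresponding cell (with $\Psi$ as in Corollary \ref{cor:bijection_of_cells_in_T_with_cosets}), so that the quotient morphism $\Pi_H$ of Theorem \ref{thm:from_subgroups_to_multicomplexes} satisfies $\Pi_H(\rho)=[\Psi^{-1}(\rho)]_H=\Fa$. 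By Proposition \ref{prop:links_in_T^d_k}, $\lk_{T_{d,k}}(\rho)\cong T_{d-j-1,k}$, whose $1$-skeleton is connected (for $d-j-1\ge 1$ this is immediate from the inductive construction, since consecutive $d$-cells along a lower path share $(d-1)$-faces, hence vertices). The goal is to show that $\Pi_H$ induces a surjective simplicial multimap $\Theta:\lk_{T_{d,k}}(\rho)\to\lk_{\widetilde{\FX}_{d,k}(H)}(\Fa)$; connectedness of the target then follows.

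The heart of the argument is a coset computation identifying the top cells on both sides. Since $\Pi_H$ is a morphism carrying $\rho$ to $\Fa$, restricting it to the cells containing $\rho$ yields a simplicial multimap $\Theta$ sending a top cell $\eta$ of $\lk_{T_{d,k}}(\rho)$ (so $\rho\amalg\eta\in T_{d,k}^{d}$) to the top multicell of $\lk(\Fa)$ determined by the $d$-multicell $\Pi_H(\rho\amalg\eta)$. I would first determine the $d$-cells of $T_{d,k}$ containing $\rho$: by the simple transitivity of the left action (Lemma \ref{lem:the_action_of_G_on_T_is_free}) together with the stabilizer identity $L_{\rho,\tau}=K_{\widehat{J}}$ of Lemma \ref{lem:stabilized_lemma}, these are exactly $\{wg.\CT : w\in K_{\widehat{J}}\}$. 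On the quotient side, a $d$-multicell $[g']_H$ contains $\Fa$ precisely when $[K_{\widehat{J}}g']_H=[K_{\widehat{J}}g]_H$, i.e. when $g'\in K_{\widehat{J}}gH$; writing $g'=wgh$ with $w\in K_{\widehat{J}}$, $h\in H$ gives $[g']_H=[wg]_H$, so the $d$-multicells containing $\Fa$ are exactly $\{[wg]_H : w\in K_{\widehat{J}}\}$. As $\Pi_H(wg.\CT)=[wg]_H$, the map $\Theta$ is surjective on top cells.

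To upgrade this to surjectivity of $\Theta$ as a whole and conclude, I would use purity: $\widetilde{\FX}_{d,k}(H)$ is pure (Lemma \ref{lem:the_complex_FX_d,k(H)}), hence so is $\lk(\Fa)$, and every multicell of $\lk(\Fa)$ is $\preceq$ some top multicell $\Pi_H(\rho\amalg\eta)$. Because $\Theta$ is induced by a morphism, it respects the canonical bijection between sub-multicells of $\rho\amalg\eta$ and sub-multicells of $\Pi_H(\rho\amalg\eta)$, so every multicell of $\lk(\Fa)$ — in particular every $0$- and $1$-cell — lies in the image of $\Theta$. Connectivity then transfers along the surjection: given two $0$-cells of $\lk(\Fa)$, lift them to $0$-cells of the connected complex $\lk_{T_{d,k}}(\rho)\cong T_{d-j-1,k}$, join the lifts by an edge-path in its $1$-skeleton, and push this path forward by $\Theta$ to obtain a walk connecting the original $0$-cells. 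Hence $\lk_{\widetilde{\FX}_{d,k}(H)}(\Fa)$ is connected for every $\Fa\in \FX_{d,k}(H)^{(d-2)}_\sm$, which is exactly the assertion that $\widetilde{\FX}_{d,k}(H)$ is link-connected. (Alternatively, one could phrase the conclusion through the lower-path-connected reformulation of Proposition \ref{prop:equivalent_defn_of_lc}, but the direct route above suffices.)

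I expect the main obstacle to be not the two coset identifications, which are short, but the bookkeeping needed to verify that restricting the morphism $\Pi_H$ genuinely produces a \emph{simplicial multimap of links} that is surjective — that is, reconciling the definition of the link of a multicell (phrased via the multicells containing $\Fa$ together with their induced gluing $\sg_{\Fa}$) with the coset description, so that ``surjective on the $d$-multicells through $\Fa$'' correctly upgrades to ``surjective on all of $\lk(\Fa)$'' while keeping track of multiplicities. Once that functoriality is pinned down, the connectivity transfer is routine. Note that taking $H=\langle e\rangle$ here does not make the argument circular, since we only invoke connectedness of the $1$-skeleton of $T_{d-j-1,k}$, never its link-connectedness.
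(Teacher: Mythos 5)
Your proof is correct and follows essentially the same route as the paper's: both arguments lift to $T_{d,k}$, use Proposition \ref{prop:links_in_T^d_k} to identify $\lk_{T_{d,k}}(\rho)$ with the connected complex $T_{d-j-1,k}$, and push paths down through the quotient morphism $\Pi_H$. The only differences are in packaging — you make the surjection of links explicit via the stabilizer computation of Lemma \ref{lem:stabilized_lemma} and argue directly on the $1$-skeleton, whereas the paper reduces to lower path connectivity via Proposition \ref{prop:equivalent_defn_of_lc} and lifts two top-dimensional multicells at a time — but the substance is identical.
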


\begin{proof}
	Due to Proposition \ref{prop:equivalent_defn_of_lc} it suffices to show that the link of each $j$-multicell is $(d-j-1)$-lower path connected. Let $\Fa=[K_{\widehat{J}}g]_H$ be a $j$-multicell of $\widetilde{\FX}_{d,k}(H)$ with $-1\leq j\leq d-2$. The link $\widetilde{Y}:=\lk_{\widetilde{\FX}_{d,k}(H)}(\Fa)$ is a $(d-j-1)$-dimensional multicomplex, whose $i$-multicells (for $-1\leq i\leq d-j-1$) are in correspondence with elements $[K_{\widehat{I}}g']_H\in \CM^{i+j+1}(H)$ for $J\subseteq I$ satisfying $[K_{\widehat{J}}g']_H=[K_{\widehat{J}}g]_H$. Let $\Fb'$ and $\Fb''$ be two $(d-j-1)$-multicells in $\widetilde{Y}$ and denote by $[g']_H$ and $[g'']_H$ the corresponding $d$-multicells in $\widetilde{\FX}_{d,k}(H)$. Since both $[g']_H$ and $[g'']_H$ contain $\Fa$, we have $[K_{\widehat{J}}g']_H=[K_{\widehat{J}}g]_H=[K_{\widehat{J}}g'']_H$. Observing the corresponding $d$-cells in $T_{d,k}$, by Theorem \ref{thm:from_subgroups_to_multicomplexes}, one can find $h',h''\in H$ such that the $d$-cells $\Psi^{-1}(g'h')$ and $\Psi^{-1}(g''h'')$ contain $K_{\widehat{J}}g$. As $K_{\widehat{J}}g$ corresponds to a $j$-cell of $T_{d,k}$, and since the link of a $j$-cell in $T_{d,k}$ is isomorphic to $T_{d-j-1,k}$ (see Proposition \ref{prop:links_in_T^d_k}), we conclude that: $(a)$ the link $\lk_{T_{d,k}}(\Psi^{-1}(K_{\widehat{J}}g))$ is $(d-j-1)$-lower path connected, $(b)$	$g'h'$ and $g''h''$ have a corresponding $(d-j-1)$-cells in the link  $\lk_{T_{d,k}}(K_{\widehat{J}}g)$. Combining $(a)$ and $(b)$, one can find a path of $(d-j-1)$-cells in $\lk_{T_{d,k}}(K_{\widehat{J}}g)$, which in turn yields a path of $d$-cells in $T_{d,k}$ from $g'h'$ to $g''h''$, all of whose $d$-cells contain the $j$-cell $K_{\widehat{J}}g$. Projecting this path to $\widetilde{\FX}_{d,k}(H)$, one obtains a path of $d$-multicells from $[g'h']_H=[g']_H$ to $[g''h'']_H=[g'']_H$, all of which contain the $j$-multicell $[K_{\widehat{J}}g]_H$. Finally, projecting this path to the link $\widetilde{Y}$, we conclude that there is a path of $(d-j-1)$-multicells from $\Fb'$ to $\Fb''$ in the link, thus completing the proof. 
\end{proof}

%**********************************************************************************************

\subsection{Links of $T_{d,k}$ described by the group $G_{d,k}$}

Combining Proposition \ref{prop:links_in_T^d_k} and  Theorem \ref{thm:j-cells_in_Tdk_as_cosets} we can give a group description of the links in $T_{d,k}$ using the group $G_{d,k}$. 

Let $(T_{d,k},\Gamma,\Om,\CT)$ be a representative of the universal object, and fix a cell  $\si\in T_{d,k}^j$ of color $\Gamma(\si)=J$. Assume first that $\Phi^{-1}(\si) = K_{\widehat{J}}e$. Then, the set of cells in $T_{d,k}$ containing $\si$ is in correspondence with the cosets $K_{\widehat{I}}g'\in \CM$ such that $J\subseteq I$ and $K_{\widehat{J}}=K_{\widehat{J}}g'$, or equivalently, $J\subseteq I$ and $g'\in K_{\widehat{J}}$. Furthermore, all the cell in the link of $\si$ are colored by the colors $\ls d\rs _J := \ls d\rs \setminus J$, and the color of the cell corresponding to $K_{\widehat{I}}g'$ is $\ls d\rs_J \setminus I = \ls d\rs_J \setminus (I\setminus J)$. 

Let $G_{d,k}^J = \ast_{i\in \ls d\rs_J} K_i \leq G_{d,k}$ be the subgroup of generated by $(\alpha_i)_{i\in \ls d\rs_J}$. Note that 
\begin{itemize}
	\item $G_{d,k}^J$ is isomorphic to $G_{d-j-1,k}$.
	\item By Theorem \ref{thm:j-cells_in_Tdk_as_cosets}, $G_{d-j-1,k}$ gives rise to the simplicial complex $T_{d-j-1,k}$ using the map $\Phi$. 
	\item By Proposition \ref{prop:links_in_T^d_k} The link of $\si$ is isomorphic to $T_{d-j-1,k}$.
\end{itemize} 

Hence we can also use the group $G_{d,k}$ in order to describe the link of $\si$, namely, $K_{\widehat{I}}g'$ with $J\subseteq I$ and $g'\in K_{\widehat{J}}\leq G_{d,k}$ corresponds to the coset $K_{\ls d\rs_J \setminus (I\setminus J)}g'$ in $G_{d,k}^J$, which in turn corresponds to a cell in the link of $\si$. 

Finally, in the general case, i.e. $\Phi^{-1}(\si)=K_{\widehat{J}}g$ for a general $g\in G_{d,k}$, we can give a natural structure, by an appropriate conjugation by $g$.

%**********************************************************************************************
%**********************************************************************************************

\section{From simplicial complexes to subgroups and back}\label{sec:from_simplicial_complexes_to_subgroups_and_back}

In this section we discuss how to associate a subgroup of $G_{d,k}$ to objects of $\scrC_{d,k}$ and study the relation between the original object and the quotient object obtained from the associated subgroup.

%**********************************************************************************

\subsection{Classification of objects in $\scrC_{d,k}^{lc}$}

Recall that we have an action from the left of the group $G_{d,k}$ on the $d$-multicells of every object in the category $\scrC_{d,k}$ (see Section \ref{sec:group_action}). Note that this action is not in the category $\scrC_{d,k}$. 

\begin{definition}[The associated subgroup]
	For $(\widetilde{X},\gamma,\om,\Fa_0)\in \scrC_{d,k}$ define 
	\begin{equation}
		\CH((\widetilde{X},\gamma,\om,\Fa_0)) = \{g\in G_{d,k} ~:~ g.\Fa_0 = \Fa_0\}.
	\end{equation}
	Since $\CH((\widetilde{X},\gamma,\om,\Fa_0))$ is the stabilizer of $\Fa_0$, it is a subgroup of $G_{d,k}$, that we will call the subgroup associated with $(\widetilde{X},\gamma,\om,\Fa_0)$.
\end{definition}

\begin{theorem}[Classification of objects in $\scrC_{d,k}^{lc}$]\label{thm:main_theorem} Let $(T_{d,k},\Gamma,\Om,\CT)$ be a representative of the universal object. 
\begin{enumerate}[(1)]
	\item For any $H\leq G_{d,k}$, the subgroup associated with the multicomplex $(\widetilde{\FX}_{d,k}(H),\allowbreak\gamma_H,\om_H,[e]_H)$ is $H$, that is, 
	\begin{equation}\label{eq:functor_1}
		\CH((\widetilde{\FX}_{d,k}(H),\gamma_H,\om_H,[e]_H))=H.
	\end{equation}
	\item For every $(\widetilde{X},\gamma,\om,\Fa_0)\in \scrC_{d,k}^{lc}$, the object of  $\scrC_{d,k}$ associated with $\CH((\widetilde{X},\gamma,\om,\Fa_0))$ is isomorphic to $(\widetilde{X},\gamma,\om,\Fa_0)$, that is 
	\begin{equation}
		(\widetilde{X},\gamma,\om,\Fa_0) \cong (\widetilde{\FX}_{d,k}(H),\gamma_H,\om_H,[e]_H),
	\end{equation}
	where $H=\CH((\widetilde{X},\gamma,\om,\Fa_0))$.
\end{enumerate}
In particular, we obtain a correspondence between subgroups of $G_{d,k}$ and the link-connected objects in $\scrC_{d,k}$. 
\end{theorem}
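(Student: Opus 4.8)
The \emph{In particular} clause is a formal consequence of parts (1) and (2), so I will reduce to proving those. Together with Proposition \ref{prop:quotients_are_link_connected} (which places $\widetilde{\FX}_{d,k}(H)$ in $\scrC_{d,k}^{lc}$), part (1) says that $H\mapsto(\widetilde{\FX}_{d,k}(H),\gamma_H,\om_H,[e]_H)$ followed by $\CH$ is the identity on subgroups, while part (2) says that $\CH$ followed by $H\mapsto\widetilde{\FX}_{d,k}(H)$ is the identity on isomorphism classes in $\scrC_{d,k}^{lc}$; hence the two assignments are mutually inverse bijections, which is the asserted correspondence. For part (1) the plan is a direct computation of the left action on the $d$-multicells of $\widetilde{\FX}_{d,k}(H)$. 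The $(d-1)$-multicell of color $\widehat{i}$ in the boundary of $[g]_H$ is $[K_i g]_H$, so by the definition of the action and \eqref{eq:ordering_rewritten}, $\alpha_i^m.[g]_H=\om_{[K_i g]_H}(\alpha_i^m).[\alpha_i^0 g]_H=[\alpha_i^m g]_H$; composing along a word gives $g.[e]_H=[g]_H$ for every $g\in G_{d,k}$. Therefore $g$ stabilizes the root $[e]_H$ iff $[g]_H=[e]_H$, i.e. iff $gH=H$, i.e. iff $g\in H$, which is exactly $\CH(\widetilde{\FX}_{d,k}(H),\gamma_H,\om_H,[e]_H)=H$.

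For part (2), set $H=\CH(\widetilde{X},\gamma,\om,\Fa_0)$. By Proposition \ref{prop:universal_property} there is a unique morphism $\widetilde{\varphi}:(T_{d,k},\Gamma,\Om,\CT)\to(\widetilde{X},\gamma,\om,\Fa_0)$. Since a morphism preserves coloring and ordering, and the left action is defined through exactly these data, $\widetilde{\varphi}$ intertwines the left $G_{d,k}$-actions on $d$-multicells; combined with $\widetilde{\varphi}(\CT)=\Fa_0$ and Lemma \ref{lem:the_action_of_G_on_T_is_free} this yields $\widetilde{\varphi}(g.\CT)=g.\Fa_0$ for all $g$. Transitivity of the action (Claim \ref{clm:action_of_G_d_k_from_the_left_is_transitive}) together with purity then make $\widetilde{\varphi}$ surjective. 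Recalling the quotient morphism $\Pi_H:\si\mapsto[\Psi^{-1}(\si)]_H$ of Theorem \ref{thm:from_subgroups_to_multicomplexes}, I intend to build an isomorphism $\widetilde{\psi}=\widetilde{\varphi}\circ\Pi_H^{-1}$ realizing $(\widetilde{\FX}_{d,k}(H),\gamma_H,\om_H,[e]_H)\cong(\widetilde{X},\gamma,\om,\Fa_0)$. The entire content reduces to the following matching of fibres, for $j$-cells $\si,\si'$ of $T_{d,k}$ of the same color $J$:
\[
  \widetilde{\varphi}(\si)=\widetilde{\varphi}(\si')\iff \Pi_H(\si)=\Pi_H(\si').
\]
Granting this, $\widetilde{\psi}(\Fa):=\widetilde{\varphi}(\si)$ for any $\si\in\Pi_H^{-1}(\Fa)$ is well defined (by $\Leftarrow$) and injective (by $\Rightarrow$), it is onto because $\widetilde{\varphi}$ is, and it transports gluing, coloring, ordering and root through the surjection $\Pi_H$; a bijective structure-preserving multimap is an isomorphism.

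It remains to prove the displayed equivalence, which is the heart of the argument. Write $\Psi^{-1}(\si)=K_{\widehat{J}}g$ and $\Psi^{-1}(\si')=K_{\widehat{J}}g'$, and note that $\widetilde{\varphi}(\si)$ is the $j$-subface of color $J$ of $g.\Fa_0$, and likewise for $\si'$. For $(\Leftarrow)$, the hypothesis $\Pi_H(\si)=\Pi_H(\si')$ gives $g'\in K_{\widehat{J}}gH$, so $g'.\Fa_0=w.(g.\Fa_0)$ with $w\in K_{\widehat{J}}$ (using $H=\mathrm{Stab}(\Fa_0)$); since every generator $\alpha_i$ with $i\in\widehat{J}$ acts on a $d$-multicell through its $(d-1)$-face of color $\widehat{i}\supseteq J$, and that face carries a unique color-$J$ subface shared with the image, the whole of $w$ fixes the color-$J$ subface, whence $\widetilde{\varphi}(\si')=\widetilde{\varphi}(\si)$. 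For $(\Rightarrow)$, if $g.\Fa_0$ and $g'.\Fa_0$ share a common color-$J$ subface $\Fb$, then the \emph{link-connectedness of $\widetilde{X}$}, via Proposition \ref{prop:equivalent_defn_of_lc}, furnishes a path of $d$-multicells from $g.\Fa_0$ to $g'.\Fa_0$ all containing $\Fb$, in which consecutive terms share a $(d-1)$-face containing $\Fb$; each such face has color $\widehat{i}\supseteq J$ with $i\in\widehat{J}$, so each step is the action of some $\alpha_i^{l}$ with $i\in\widehat{J}$, giving $g'.\Fa_0=w.(g.\Fa_0)$ with $w\in K_{\widehat{J}}$, hence $g'\in K_{\widehat{J}}gH$ and $\Pi_H(\si)=\Pi_H(\si')$. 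The main obstacle is precisely this $(\Rightarrow)$ direction: it is where link-connectedness is indispensable — without it the color-$J$ faces of $\widetilde{X}$ could be over-identified relative to what $H$ prescribes, as in the non-link-connected example of the introduction — and it is the step that makes the correspondence fail on $\scrC_{d,k}\setminus\scrC_{d,k}^{lc}$.
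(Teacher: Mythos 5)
Your proposal is correct and follows essentially the same route as the paper: part (1) via the identity $g.[e]_H=[g]_H$, and part (2) via the map sending $[K_{\widehat{J}}g]_H$ to the color-$J$ subface of $g.\Fa_0$, with well-definedness coming from $K_{\widehat{J}}$ stabilizing that subface and injectivity coming from link-connectedness producing a path of $d$-multicells through the common face, hence an element of $K_{\widehat{J}}$. Your packaging of this map as a descent of the universal morphism through $\Pi_H$ (matching fibres of $\widetilde{\varphi}$ and $\Pi_H$) is only a cosmetic reorganization of the paper's direct construction.
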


\begin{proof}$~$
\begin{enumerate}[(1)]
	\item The main identity needed for the proof of \eqref{eq:functor_1} is
	\begin{equation}\label{eq:action_of_g_on_cosets}
		g.[e]_H = [g]_H,\qquad \forall g\in G_{d,k},
	\end{equation}
	where the action of $G_{d,k}$ is the one defined by the group action on the $d$-multicells of $\widetilde{\FX}_{d,k}(H)$. Indeed, if \eqref{eq:action_of_g_on_cosets} holds, then
	\begin{equation}
	\begin{aligned}
		\CH((\widetilde{\FX}_{d,k}(H),\gamma_H,\om_H,[e]_H))
		&=\{g\in G_{d,k} ~:~ g.[e]_H) = [e]_H\}\\
		& =\{g\in G_{d,k} ~:~ [g]_H = [e]_H\}=H
	\end{aligned}
	\end{equation} 
thus completing the proof. 
	
	We turn to prove \eqref{eq:action_of_g_on_cosets}. Since $g$ can be written as a word in $\alpha_0,\ldots,\alpha_d$ it is enough to show that $\alpha_i.[g]_H = [\alpha_ig]_H$ for any $g\in G_{d,k}$ and $i\in \ls d\rs$. The last equality is now a consequence of the definition of the action and the ordering. Indeed, for any $i\in\ls d\rs$ and $g\in G_{d,k}$
	\begin{equation}
		\alpha_i.[g]_H = \om_{[K_i g]_H}(\alpha_i).[g]_H = [\alpha_ig]_H
	\end{equation}
	as required.

	\item Assume $\widetilde{X}=(X,\sm,\sg)$. We construct the morphism from $(\widetilde{\FX}_{d,k}(H),\gamma_H,\om_H,[e]_H)$ to $(\widetilde{X},\gamma,\om,\allowbreak\Fa_0)$ in several steps: (i) a map between $0$-cells, $(ii)$ extension to a simplicial map from $\FX_{d,k}(H)$ to $X$, $(iii)$ extension to a simplicial multimap from $\widetilde{\FX}_{d,k}(H)$ to $\widetilde{X}$, and finally, $(iv)$ showing that the resulting simplicial multimap is a bijective morphism, whenever $\widetilde{X}$ is link-connected.
	
	$(i)$ Recall that the $0$-cells have no multiplicity in any complex and define $\varphi:\FX_{d,k}^0(H) \to X^0$ by 
	\begin{equation}
		\varphi ([K_{\widehat{i}}g]_H) = \begin{array}{c}
		\text{the unique }0\text{-cell of}\\
		g.\Fa_0\text{ whose color is }i
		\end{array}.
	\end{equation}
	
	First, we show that $\varphi$ is well defined. Indeed, if $[K_{\widehat{i}}g]_H = [K_{\widehat{i}}g']_H$, then $g'=wgh$ for some $w\in K_{\widehat{i}}$ and $h\in H$ and therefore $g'.\Fa_0 = wgh.\Fa_0 = wg.\Fa_0 =w.(g.\Fa_0)$, where for the second inequality we used the fact that any element of $H$ stabilizes $\Fa_0$. Recalling the definition of the action, for any element of $w'\in K_{\widehat{i}}$ and any $\Fa\in X_\sm$ it holds that $w'.\Fa$ and $\Fa$ contain the same $0$-cell whose color is $i$. In particular $g.\Fa_0$ and $g'.\Fa_0=w.(g.\Fa_0)$ contain the same $0$-cell whose color is $i$, implying that $\varphi([K_{\widehat{i}}g]_H)=\varphi([K_{\widehat{i}}g']_H)$. 
	
	$(ii)$ Next, we extend the definition of $\varphi$ to all cells in $\FX_{d,k}(H)$ by $\varphi(\{\si_0,\ldots,\si_j\})=\{\varphi(\si_0),\ldots,\allowbreak\varphi(\si_j)\}$ for every $\si=\{\si_0,\ldots,\si_j\}\in T_{d,k}^j$. We claim that $\varphi$ is a simplicial map. In order to prove this, one only need to verify that $\varphi(\{[K_{\widehat{i_0}}g]_H,\ldots,[K_{\widehat{i_j}}g]_H\})$ is a $j$-cells in $X$ for every choice of $g\in G_{d,k}$ and distinct $i_0,\ldots,i_j\in \ls d\rs$. Using the definition of $\varphi$ for $0$-cells, one can verify that $\varphi(\{[K_{\widehat{i_0}}g]_H,\ldots,[K_{\widehat{i_j}}g]_H\})$ is the set of $0$-cells of $g.\Fa_0$ whose colors are $i_0,i_1,\ldots,i_j$. Using the forgetful map $\iota$ from multicells to cells defined in Section \ref{sec:multicomplexes}, since the colors are distinct and since $\iota(g.\Fa_0)$ is a $d$-cell of $X$ (which is closed under inclusion), we conclude that $\varphi(\{[K_{\widehat{i_0}}g]_H,\ldots,[K_{\widehat{i_j}}g]_H\})$ is a $j$-cell in $X$. 
	
	$(iii)$ A final extension of $\varphi$ to a map $\widetilde{\varphi}:\widetilde{X}\to \widetilde{\FX}_{d,k}(H)$ is defined as follows. For $0\leq j\leq d$, $J\subset \ls d\rs$ satisfying $|J|=j+1$ and $g\in G_{d,k}$ define
	\begin{equation}
		\widetilde{\varphi}([K_{\widehat{J}}g]_H) = \begin{array}{c}
		\text{the unique }j\text{-multicell contained}\\
		\text{in }g.\Fa_0\text{ whose color is }J
		\end{array}.
	\end{equation}
	Using a similar argument to the one above, we obtain that $\widetilde{\varphi}$ is  an extension of the map $\varphi$. Furthermore, this map is well defined due to the definition of $H$ and the fact that the action of elements in $K_{\widehat{J}}$ from the left on $d$-multicell stabilizes the $(|J|-1)$-cell of color $J$. 
	
	$(iv)$ Let us now show that $\widetilde{\varphi}$ is a bijection of multicomplexes. Indeed, since the action of $G_{d,k}$ on the $d$-multicells of $\widetilde{X}$ is transitive (see Claim \ref{clm:action_of_G_d_k_from_the_left_is_transitive}), it follows that the map is onto. As for injectivity, we separate the  proof to the case of $d$-multicells and to lower dimensional multicells. Starting with the former, note that if $\widetilde{\varphi}([g]_H)=\widetilde{\varphi}([g']_H)$, then $g.\Fa_0 = g'.\Fa_0$ and hence $g^{-1}g'.\Fa_0=\Fa_0$. Recalling the definition of the subgroup $H$, we obtain that $g^{-1}g'\in H$, or equivalently $[g]_H=[g']_H$. Turning to deal with general multicells, if $\widetilde{\varphi}([K_{\widehat{J}}g]_H) = \widetilde{\varphi}([K_{\widehat{J'}}g']_H)$, then $J=J'$, because otherwise the colors or dimension of the multicells are different and in particular there is no equality. Note that the equality $\widetilde{\varphi}([K_{\widehat{J}}g]_H) = \widetilde{\varphi}([K_{\widehat{J}}g']_H)$  implies that the $d$-multicells $g.\Fa_0$ and $g'.\Fa_0$ contains the same $(|J|-1)$-multicell of color $J$, denoted $\Fb$. Therefore, in the link $\lk_{\widetilde{X}}(\Fb)$, there are $(d-\dim\Fb-1)$-multicells $\Fc$ and $\Fc'$ which correspond to the $d$-multicells $g.\Fa_0$ and $g'.\Fa_0$ respectively. Recalling that $\widetilde{X}$ is link-connected and using Proposition \ref{prop:equivalent_defn_of_lc}, we conclude that there exists a path of $(d-\dim\Fb-1)$-multicells in $\lk_{\widetilde{X}}(\Fb)$ from $\Fc$ to $\Fc'$ such that each pair of consecutive multicells along the path contains a common $(d-\dim\Fb-2)$-multicell. Lifting this path to $\widetilde{X}$, one obtain a path of $d$-multicells from $g.\Fa_0$ to $g'.\Fa_0$ such that each consecutive pair contains a common $(d-1)$-multicell, and each of the $d$-multicells contains the multicell $\Fb$. Recalling the definition of the action of $G_{d,k}$ (see Section \ref{sec:group_action}) this implies the existence of $w\in K_{\widehat{J}}$ such that $wg.\Fa_0=w.(g.\Fa_0)=g'.\Fa_0$. Hence, by the same argument used for $d$-multicells $[wg]_H=[g']_H$, and therefore $[K_{\widehat{J}}g]_H = [K_{\widehat{J}}g']_H$. 
	
	Finally, we note that $\widetilde{\varphi}$ preserves the coloring ordering and root. The coloring is preserved due to the definition of $\widetilde{\varphi}$ and the choice of coloring $\gamma_H$ for cells in $\FX_{d,k}(H)$. Similarly, the root is preserved, due to the definition of $\varphi$. As for the ordering, due to the definition of $\om_H$, see \eqref{eq:ordering_rewritten}, one needs to show that for every $i\in \ls d\rs$, $g\in G_{d,k}$ and $l,m\in \ls k-1\rs$
	\begin{equation}
		\widetilde{\varphi}([\alpha_i^{l+m}g]_H)= \om_{\widetilde{\varphi}([K_{\widehat{i}}g]_H)}(\alpha_i^l).\widetilde{\varphi}([\alpha_i^m g]_H).
	\end{equation}
	
	Starting with the left hand side, $\widetilde{\varphi}([\alpha_i^{l+m}g]_H)$ is the unique $d$-multicell $\alpha_i^{l+m}g.\Fa_0$. As for the right hand side, denoting by $\Fb$ the unique $(d-1)$-multicell contained in $\alpha_i^mg.\Fa_0$ whose color is $\ls d\rs \setminus i$, this can be rewritten as $\om_{\Fb}(\alpha_i^l).(\alpha_i^mg.\Fa_0)$ which by the definition of the action on $(\widetilde{X},\gamma,\om,\Fa_0)$ equals to $\alpha_i^{l+m}g.\Fa_0$ as well. 
\end{enumerate}
\end{proof}

\subsection{Some implications of Theorem \ref{thm:main_theorem}}

\begin{proposition}\label{prop:the_line_graph_of_widetilde_FX_d_k_H}
	For every subgroup $H\leq G_{d,k}$, the line graph of $\widetilde{\FX}_{d,k}(H)$ is isomorphic to the Schreier graph $\textrm{Scr}(G/H;S)$, where $S=\{\alpha_i^l ~:~ i\in \ls d\rs,\,l\in [k-1]\}$.
\end{proposition}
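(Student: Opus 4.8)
The plan is to write down the obvious bijection on vertices and then read off the adjacency relation from Claim~\ref{clm:the_comulticells_of_a_d-1_multicell}. First, the vertices of the line graph $\scrG(\widetilde{\FX}_{d,k}(H))$ are the $d$-multicells of $\widetilde{\FX}_{d,k}(H)$, i.e. the set $\CM^d(H)$, which (as recorded just after its definition) is canonically identified with the left cosets $G_{d,k}/H$ via $[g]_H\leftrightarrow gH$. Since the vertex set of $\textrm{Scr}(G_{d,k}/H;S)$ is also $G_{d,k}/H$, I would take $\Theta([g]_H)=gH$ and show it is a graph isomorphism.

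For the edges, recall that two distinct $d$-multicells span an edge of the line graph precisely when they share a common $(d-1)$-multicell. By Claim~\ref{clm:the_comulticells_of_a_d-1_multicell} the $d$-multicells containing $[K_ig]_H$ are exactly $\{[\alpha_i^lg]_H : l\in\ls k-1\rs\}$, so $[g]_H$ and $[g']_H$ are adjacent if and only if $[g']_H=[\alpha_i^lg]_H$ for some $i\in\ls d\rs$ and $l\in[k-1]$. Under $\Theta$ this reads $g'H=\alpha_i^l\,gH=s\cdot gH$ for some $s\in S$, which is exactly the adjacency relation of the (left) Schreier graph, whose edges join $gH$ to $sgH$ for $s\in S$. (The relation is undirected because $S$ is symmetric: $(\alpha_i^l)^{-1}=\alpha_i^{k-l}\in S$.) Hence $\Theta$ preserves adjacency and non-adjacency, giving the isomorphism of simple graphs.

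The step I expect to require the most care is matching edge multiplicities, should the statement be intended for multigraphs rather than simple graphs. A $(d-1)$-multicell $[K_ig]_H$ of degree $m$ lies in $m$ distinct $d$-multicells and so contributes an $m$-clique to the line graph, while on the Schreier side the cosets $\{\alpha_i^lgH\}_l$ of a fixed colour $i$ are pairwise joined by the generators $\alpha_i^{l'-l}\in S$; I would make this precise by labelling each line-graph edge by its shared $(d-1)$-multicell together with the unique generator realizing the coset shift, and verifying this matches the $S$-labelling of Schreier edges. A cleaner route to the same conclusion is to observe that the right $H$-action on $T_{d,k}$ acts by graph automorphisms on $\scrG(T_{d,k})\cong\textrm{Cay}(G_{d,k};S)$ (Corollary~\ref{cor:bijection_of_G_and_T}) and that the line graph of $\widetilde{\FX}_{d,k}(H)$ is the quotient by this action, which is by definition $\textrm{Scr}(G_{d,k}/H;S)$; the quotient map $\Pi_H$ of Theorem~\ref{thm:from_subgroups_to_multicomplexes} realizes the identification on vertices.
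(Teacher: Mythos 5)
Your proof is correct and follows essentially the same route as the paper's: both identify the $d$-multicells with the cosets $G_{d,k}/H$ and use Claim \ref{clm:the_comulticells_of_a_d-1_multicell} to see that $[g]_H$ and $[g']_H$ share a $(d-1)$-multicell exactly when $g'H=\alpha_i^l gH$ for some $\alpha_i^l\in S$ (the paper phrases this as computing the connection set $S'$ of the Schreier graph arising from the transitive left action with stabilizer $H$). Your extra care about edge multiplicities and the alternative quotient-of-the-Cayley-graph viewpoint are welcome refinements but not a different argument.
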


\begin{proof}
	Since $G_{d,k}$ acts transitively on the $d$-multicells of $(\widetilde{\FX}_{d,k}(H),\gamma_H,\om_H,[e]_H)$, and since the stabilizer of $[e]_H$ is the subgroup $H$, it follows that $\scrG(\widetilde{\FX}_{d,k}(H))$ is isomorphic to the Schreier graph of $G_{d,k}/H$ with respect to the generator set $S'=\{g\in G_{d,k} ~:~ g.[e]_H \text{ is a neighbor of }[e]_H\}$. Hence, it remains to show that $S'=S$. First, note that $S\subseteq S'$, since the definition of the left action on $\widetilde{\FX}_{d,k}(H)$ guarantees that the $d$-multicells $[e]_H$ and $\alpha_i^l.[e]_H$ contain the same $(d-1)$-multicell of color $\widehat{i}$, that is, $\alpha_i^l.[e]_H$ and $[e]_H$ are neighboring $d$-cells in the line graph. As for the other direction, assume that $g\in S'$. Then, $g.[e]_H$ is a neighbor of $[e]_H$ in the line graph, that is, they contain a common $(d-1)$-multicell. Recalling that the $(d-1)$-multicells contained in $[e]_H$ are $([K_ie]_H)_{i\in \ls d\rs}$ and using Claim \ref{clm:the_comulticells_of_a_d-1_multicell}, we conclude that $[g]_H=[\alpha_i^l]_H$ for some $l\in \ls k-1\rs$, thus proving that $S'\subseteq S$. 
\end{proof}

Combining proposition \ref{prop:the_line_graph_of_widetilde_FX_d_k_H} and Theorem \ref{thm:main_theorem} we obtain

\begin{corollary}$~$
	\begin{enumerate}[(1)]
		\item For every $d,k$ and $H\leq G_{d,k}$ there exists a $d$-multicomplex, whose line graph is isomorphic to the Schreier graph $\textrm{Sch}(G_{d,k}/H;S)$, where $S=\{\alpha_i^l ~:~ i\in \ls d\rs,\,l\in [k-1]\}$. 	
		\item Every pure, link-connected $d$-multicomplex $\widetilde{X}$, such that $\deg(\Fb)$ divides $k$ for every $(d-1)$-multicell $\Fb$ is isomorphic to $\widetilde{\FX}_{d,k}(H)$ for some $H\leq G_{d,k}$. 
	\end{enumerate}
\end{corollary}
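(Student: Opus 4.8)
The plan is to deduce part~(1) directly from Proposition~\ref{prop:the_line_graph_of_widetilde_FX_d_k_H}, and to obtain part~(2) by endowing the given multicomplex with the extra data that turns it into an object of $\scrC_{d,k}^{lc}$, after which Theorem~\ref{thm:main_theorem}(2) applies essentially verbatim.

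For part~(1), given $H\le G_{d,k}$ I would take the multicomplex $\widetilde{\FX}_{d,k}(H)$ built in Subsection~\ref{subsec:the_construction}. By Proposition~\ref{prop:the_line_graph_of_widetilde_FX_d_k_H} its line graph is isomorphic to $\textrm{Scr}(G_{d,k}/H;S)$ with $S=\{\alpha_i^l : i\in\ls d\rs,\, l\in[k-1]\}$, which is exactly the assertion; so part~(1) is immediate.

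For part~(2), write $\widetilde{X}=(X,\sm,\sg)$ and attach to it a triple $(\gamma,\om,\Fa_0)$ as follows. First fix a coloring $\gamma$; here one must assume $\widetilde{X}$ is colorable, a hypothesis that is genuinely necessary, since every $\widetilde{\FX}_{d,k}(H)$ is colorable (already for $d=1$ a triangle is pure, connected and $2$-regular yet not bipartite, hence isomorphic to no $\widetilde{\FX}_{1,2}(H)$). Next choose any root $\Fa_0\in X_\sm^d$. Finally, build a $k$-ordering from the divisibility hypothesis: for each $\Fb\in X_\sm^{d-1}$ enumerate $\delta_{\widetilde{X}}(\Fb)=\{\Fc_0,\dots,\Fc_{n-1}\}$ with $n=\deg_{\widetilde{X}}(\Fb)$, and send a fixed generator of $C_k$ to the $n$-cycle $\Fc_i\mapsto\Fc_{(i+1)\bmod n}$; since $n\mid k$ this cycle has order dividing $k$, so the assignment extends to a transitive homomorphism $\om_\Fb:C_k\to\CS_{\delta(\Fb)}$, and $\om=(\om_\Fb)_\Fb$ is a valid $k$-ordering. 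The divisibility hypothesis also gives $\max_{\Fb\in X_\sm^{d-1}}\deg_{\widetilde{X}}(\Fb)\le k$, the last numerical requirement of the category.

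It remains to verify that $(\widetilde{X},\gamma,\om,\Fa_0)$ really lies in $\scrC_{d,k}^{lc}$; the only condition not yet arranged is $d$-lower path connectedness, and this is exactly where link-connectedness enters. Applying Proposition~\ref{prop:equivalent_defn_of_lc} to the pure, link-connected $\widetilde{X}$ with $j=-1$ and $\Fa=\emptyset$, and using $\lk_{\widetilde{X}}(\emptyset)=\widetilde{X}$, shows that $\widetilde{X}$ is $d$-lower path connected. Thus $(\widetilde{X},\gamma,\om,\Fa_0)\in\scrC_{d,k}^{lc}$, and Theorem~\ref{thm:main_theorem}(2) gives an isomorphism $(\widetilde{X},\gamma,\om,\Fa_0)\cong(\widetilde{\FX}_{d,k}(H),\gamma_H,\om_H,[e]_H)$ in $\scrC_{d,k}$, with $H=\CH((\widetilde{X},\gamma,\om,\Fa_0))$; forgetting $\gamma,\om$ and the root yields the desired multicomplex isomorphism $\widetilde{X}\cong\widetilde{\FX}_{d,k}(H)$. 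I expect the one real subtlety to be this colorability gap in the stated hypotheses — the natural reading is that colorability is a standing assumption — while the only genuinely new construction is the ordering read off from the divisibility condition, every other ingredient being a direct appeal to the machinery already set up.
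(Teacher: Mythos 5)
Your proposal is correct and follows the paper's intended route exactly: the paper derives this corollary by simply ``combining'' Proposition \ref{prop:the_line_graph_of_widetilde_FX_d_k_H} with Theorem \ref{thm:main_theorem}, and your filling-in of the details (building the $k$-ordering from the divisibility hypothesis via an $n$-cycle, and extracting $d$-lower path connectedness from link-connectedness via Proposition \ref{prop:equivalent_defn_of_lc} at $\Fa=\emptyset$) is exactly what is needed. Your observation that colorability must be read as a standing hypothesis in part (2) is a fair and correct remark about the statement, and your triangle example for $d=1$, $k=2$ is a valid witness.
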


\begin{remark}
	Let $(\widetilde{X},\gamma,\om,\Fa)\in \scrC_{d,k}$ and denote by $H$ the corresponding subgroup of $G_{d,k}$. It follows from the work of \cite[Corollary 4.1]{Lee16} that the line graph of $\widetilde{X}$ is transitive if and only if $H$ is length-transitive (see \cite[Definition 4.3]{Lee16}). It is interesting to ask, whether one can find a condition on $H$ that guarantees transitivity of the line graphs of the skeletons of $\widetilde{X}$. Similarly, let $H_1,H_2$ be two subgroups of $G_{d,k}$. It is shown in \cite[Theorem 4.1]{Lee16} that the line graphs of $\widetilde{\FX}_{d,k}(H_1)$ and $\widetilde{\FX}_{d,k}(H_2)$ are isomorphic if and only if $H_1$ and $H_2$ are length-isomorphic (see \cite[Definition 4.2]{Lee16}). It is interesting to ask whether one can find conditions on $H_1$ and $H_2$ that guarantee isomorphism of the line graphs of the skeletons of $\widetilde{\FX}_{d,k}(H_1)$ and $\widetilde{\FX}_{d,k}(H_2)$. 
\end{remark}

Using Theorem \ref{thm:main_theorem} we may also deduce Proposition \ref{prop:uniqueness_of_T_d_k}. Indeed, the assumption that $T_{d,k}$ is pure $d$-dimensional, $k$-regular and has the non-backtracking property described in Proposition \ref{prop:uniqueness_of_T_d_k} determines the line graph of $T_{d,k}$ completely. Hence, by Theorem \ref{thm:main_theorem} there exists a unique object in $\scrC_{d,k}^{lc}$ up to isomorphism whose line graph is the line graph of $T_{d,k}$, which must be $T_{d,k}$ itself. 

Another corollary of Theorem \ref{thm:main_theorem} is the following:

\begin{corollary}\label{cor:covering_two_complexes}
	For every pair of finite objects $(X_1,\gamma_1,\om_1,\Fa_1)$ and $(X_2,\gamma_2,\om_2,\Fa_2)$ in $\scrC_{d,k}$, there exists a finite $(Y,\gamma,\om,\Fa)\in \scrC_{d,k}^{lc}$ with surjective morphisms $\pi_i:(Y,\gamma,\om,\Fa)\to (X_i,\gamma_i,\om_i,\Fa_i)$, $i=1,2$.
\end{corollary}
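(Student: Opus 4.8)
The plan is to translate the problem into group theory via the dictionary of Theorem \ref{thm:main_theorem} and to realize the common cover as the multicomplex attached to an intersection of subgroups. First I would attach to each input its associated subgroup $H_i := \CH((X_i,\gamma_i,\om_i,\Fa_i)) = \{g \in G_{d,k} : g.\Fa_i = \Fa_i\} \le G_{d,k}$. By Claim \ref{clm:action_of_G_d_k_from_the_left_is_transitive} the left action of $G_{d,k}$ on the $d$-multicells of $X_i$ is transitive, so by orbit--stabilizer the set of $d$-multicells of $X_i$ is in bijection with $G_{d,k}/H_i$; since $X_i$ is finite this forces $[G_{d,k}:H_i] < \infty$. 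I would then set $H := H_1 \cap H_2$, which again has finite index because $[G_{d,k}:H] \le [G_{d,k}:H_1]\,[G_{d,k}:H_2]$ (the cosets of $H$ inject into pairs of cosets via $gH \mapsto (gH_1,gH_2)$).

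The candidate common cover is $(Y,\gamma,\om,\Fa) := (\widetilde{\FX}_{d,k}(H),\gamma_H,\om_H,[e]_H)$, the object built from $H$ in Section \ref{sec:from_subgroup_to_elements_of_the_category}. It lies in $\scrC_{d,k}$ by construction, it is link-connected by Proposition \ref{prop:quotients_are_link_connected} (so $Y \in \scrC_{d,k}^{lc}$), and since its $d$-multicells are indexed by $\CM^d(H)$, which is in bijection with $G_{d,k}/H$, finiteness of $[G_{d,k}:H]$ together with purity (Lemma \ref{lem:the_complex_FX_d,k(H)}) makes $Y$ finite.

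It remains to produce surjective morphisms $\pi_i : Y \to X_i$, and here the key observation is that the construction in the proof of Theorem \ref{thm:main_theorem}(2) never used the full equality $H = \CH(X_i)$, nor the link-connectedness of the target: it used the inclusion $H \le \CH(X_i)$ only to get well-definedness, and transitivity of the action only to get surjectivity. Concretely I would set $\pi_i([K_{\widehat{J}}g]_H)$ to be the unique multicell of color $J$ contained in $g.\Fa_i$. Well-definedness is exactly the argument of that proof: if $[K_{\widehat{J}}g]_H = [K_{\widehat{J}}g']_H$ then $g = w g' h$ with $w \in K_{\widehat{J}}$ and $h \in H \le H_i$, so $g.\Fa_i = w.(g'.\Fa_i)$, and since $K_{\widehat{J}}$ fixes the $J$-colored cell (Lemma \ref{lem:stabilized_lemma}) the two images agree. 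The same computations as there show $\pi_i$ is a simplicial multimap preserving color, ordering and root (indeed $\pi_i([e]_H) = e.\Fa_i = \Fa_i$); surjectivity onto the $d$-multicells follows from transitivity (Claim \ref{clm:action_of_G_d_k_from_the_left_is_transitive}), and hence onto all of $X_i$ by purity.

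The step that needs the most care --- and the only genuinely non-formal point --- is verifying that $\pi_i$ maps correctly into a target $X_i$ that need not be link-connected, i.e.\ extracting from the proof of Theorem \ref{thm:main_theorem}(2) precisely which hypotheses each part uses; once it is confirmed that well-definedness needs only $H \le H_i$ and that surjectivity needs only transitivity, the remainder is the group-theoretic bookkeeping above. I would emphasize that the argument relies essentially on the group machinery: the finiteness of $[G_{d,k}:H_1 \cap H_2]$ has no evident direct combinatorial counterpart, which is exactly why a purely combinatorial proof (as noted in the introduction) appears elusive.
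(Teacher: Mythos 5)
Your proposal is correct and follows exactly the paper's route: the paper's proof is the one-liner "take $H_3 = H_1 \cap H_2$ and let $Y = \widetilde{\FX}_{d,k}(H_3)$", relying implicitly on Theorem \ref{thm:H_universality} (whose proof repeats the argument of Theorem \ref{thm:main_theorem}(2) without assuming link-connectedness of the target) for the surjective morphisms. You have simply filled in the finiteness bookkeeping and the verification that $H_3 \le H_i$ suffices for well-definedness, all of which matches the paper's intent.
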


\begin{proof}
	If $H_1$ and $H_2$ are subgroups of $G_{d,k}$ associated with $(X_1,\gamma_1,\om_1,\Fa_1)$ and $(X_2,\gamma_2,\om_2,\Fa_2)$ respectively, then take $(Y,\gamma,\om,\Fa) = (\widetilde{\FX}_{d,k}(H_3),\gamma_{H_3},\om_{H_3},[e]_{H_3})$, where $H_3=H_1\cap H_2$. 
\end{proof}

Corollary \ref{cor:covering_two_complexes} is a kind of analogue of Leighton's graph covering theorem (see \cite{Lei82}) asserting that two finite graphs with the same universal cover have a common finite cover. It should be noted however, that in our case $T_{d,k}$ is usually not the universal cover of neither $X_1$ nor $X_2$. Moreover, in general the fundamental groups $\pi_1(X_1)$ and $\pi_1(X_2)$ can be very different and, in particular, not commensurable to each other (i.e., do not have isomorphic finite index subgroups) as is the case for graphs. Therefore, Corollary \ref{cor:covering_two_complexes} is not totally expected. In fact we do not know how to prove it by direct combinatorial methods.

%**************************************************************************************

\subsection{The link-connected cover}

\begin{theorem}\label{thm:H_universality}
	For every $(\widetilde{X},\gamma,\om,\Fa_0)\in \scrC_{d,k}$ there exists a unique (up to isomorphism) object $(\widetilde{Y},\widehat{\gamma},\widehat{\om},\widehat{\Fa}_0)\in \scrC_{d,k}$ with an epimorphism $\pi:(\widetilde{Y},\widehat{\gamma},\widehat{\om},\widehat{\Fa}_0)\to (\widetilde{X},\gamma,\om,\Fa_0)$ satisfying the following properties
	\begin{enumerate}[(1)]
		\item $\widetilde{Y}$ is link-connected.
		\item For every $(\widetilde{Z},\overline{\gamma},\overline{\om},\overline{\Fa}_0)\in \scrC_{d,k}^{lc}$ and every epimorphism $\varphi:(\widetilde{Z},\overline{\gamma},\overline{\om},\overline{\Fa}_0)\to (\widetilde{X},\gamma,\om,\Fa_0)$, there exists an epimorphism $\psi:(\widetilde{Z},\overline{\gamma},\overline{\om},\overline{\Fa}_0)\to (\widetilde{Y},\widehat{\gamma},\widehat{\om},\widehat{\Fa}_0)$ such that $\pi \circ \psi = \varphi$. 
	\end{enumerate}
	Furthermore, the map $\pi$ induces an isomorphism between the line graphs of $\widetilde{X}$ and $\widetilde{Y}$, and the object $(\widetilde{Y},\widehat{\gamma},\widehat{\om},\widehat{\Fa}_0)$ is given by $(\widetilde{\FX}_{d,k}(H),\gamma_H,\om_H,[e]_H)$.
\end{theorem}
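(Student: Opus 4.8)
The plan is to take $H=\CH((\widetilde{X},\gamma,\om,\Fa_0))$, the subgroup of $G_{d,k}$ associated with the given object (the stabilizer of the root $\Fa_0$ under the left action of Section \ref{sec:group_action}), and to set $(\widetilde{Y},\widehat{\gamma},\widehat{\om},\widehat{\Fa}_0):=(\widetilde{\FX}_{d,k}(H),\gamma_H,\om_H,[e]_H)$. Property (1) is then immediate from Proposition \ref{prop:quotients_are_link_connected}. For the map $\pi$ I would reuse the construction of the morphism $\widetilde{\varphi}$ from the proof of Theorem \ref{thm:main_theorem}(2), namely $\pi([K_{\widehat{J}}g]_H)$ is the unique sub-multicell of colour $J$ of $g.\Fa_0$. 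The key observation is that steps (i)--(iii) there produce a well-defined morphism and that the surjectivity in step (iv) invokes only transitivity of the left action (Claim \ref{clm:action_of_G_d_k_from_the_left_is_transitive}), so $\pi$ is an epimorphism even though $\widetilde{X}$ need not be link-connected. Only the injectivity on lower multicells uses link-connectedness; on $d$-multicells the implication $g.\Fa_0=g'.\Fa_0\Rightarrow g^{-1}g'\in H\Rightarrow[g]_H=[g']_H$ uses only the definition of $H$, so $\pi$ restricts to a bijection $[g]_H\mapsto g.\Fa_0$ of $d$-multicells.

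Next I would verify that $\pi$ induces a line-graph isomorphism. In both $\widetilde{Y}$ and $\widetilde{X}$ the neighbours of a $d$-multicell $\Fa$ are exactly $\{\alpha_i^l.\Fa:i\in\ls d\rs,\,l\in[k-1]\}$: this follows from transitivity of each $\om_{\Fb}$ together with the definition of the left action (for $\widetilde{Y}$ it is Claim \ref{clm:the_comulticells_of_a_d-1_multicell}). Hence, using $g.\Fa_0=g'.\Fa_0\Leftrightarrow[g]_H=[g']_H$, the pair $g.\Fa_0,g'.\Fa_0$ are neighbours in $\widetilde{X}$ if and only if $[g]_H,[g']_H$ are neighbours in $\widetilde{Y}$ (equivalently, both line graphs coincide with the Schreier graph $\mathrm{Scr}(G_{d,k}/H;S)$ as in Proposition \ref{prop:the_line_graph_of_widetilde_FX_d_k_H}), so the $d$-multicell bijection above is a graph isomorphism.

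For the universal property (2), let $(\widetilde{Z},\overline{\gamma},\overline{\om},\overline{\Fa}_0)\in\scrC_{d,k}^{lc}$ and $\varphi:\widetilde{Z}\to\widetilde{X}$ an epimorphism, and put $H'=\CH((\widetilde{Z},\overline{\gamma},\overline{\om},\overline{\Fa}_0))$. Every morphism is $G_{d,k}$-equivariant on $d$-multicells, since it preserves colour, gluing and ordering and hence commutes with $\alpha_i^l.(-)=\om_{\Fb}(\alpha_i^l).(-)$; combined with $\varphi(\overline{\Fa}_0)=\Fa_0$ this gives $\varphi(g.\overline{\Fa}_0)=g.\Fa_0$, so $g\in H'$ forces $g.\Fa_0=\Fa_0$, i.e. $H'\le H$. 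Since $\widetilde{Z}$ is link-connected, Theorem \ref{thm:main_theorem}(2) identifies it with $\widetilde{\FX}_{d,k}(H')$, under which $\overline{\Fa}_0\leftrightarrow[e]_{H'}$ and $g.\overline{\Fa}_0\leftrightarrow[g]_{H'}$ by \eqref{eq:action_of_g_on_cosets}. The inclusion $H'\le H$ then lets me define $\psi([K_{\widehat{J}}g]_{H'})=[K_{\widehat{J}}g]_H$; this is well defined because the $H'$-equivalence refines the $H$-equivalence, and a direct check against the definitions of $\gamma_H,\om_H$ and the gluing shows $\psi$ is an epimorphism. On $d$-multicells $\pi\circ\psi([g]_{H'})=\pi([g]_H)=g.\Fa_0=\varphi([g]_{H'})$; since a morphism is determined by its values on $d$-multicells (each lower multicell is the unique sub-multicell of a prescribed colour of some $d$-multicell, and morphisms preserve colour), it follows that $\pi\circ\psi=\varphi$ on all of $\widetilde{Z}$.

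Finally, uniqueness of $\widetilde{Y}$ up to isomorphism is the usual universal-property argument: if $(\widetilde{Y}',\pi')$ also satisfies (1)--(2), applying (2) of each to the other (both are link-connected) yields $\psi:\widetilde{Y}'\to\widetilde{Y}$ and $\psi':\widetilde{Y}\to\widetilde{Y}'$ with $\pi\circ\psi=\pi'$ and $\pi'\circ\psi'=\pi$; then $\pi\circ(\psi\circ\psi')=\pi$, and since $\pi$ is injective on $d$-multicells (and morphisms are determined there) $\psi\circ\psi'=\mathrm{id}$, and likewise $\psi'\circ\psi=\mathrm{id}$. I expect the point needing the most care to be the claim that $\pi$ remains an epimorphism and a line-graph isomorphism without assuming $\widetilde{X}$ link-connected --- that is, isolating exactly which parts of the proof of Theorem \ref{thm:main_theorem}(2) survive --- together with the bookkeeping that $\psi$ genuinely factors $\varphi$ rather than merely agreeing with it on $d$-multicells; both reduce to the principle that morphisms in $\scrC_{d,k}$ are determined by their restriction to $d$-multicells.
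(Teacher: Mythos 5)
Your proposal is correct and follows essentially the same route as the paper: take $H=\CH((\widetilde{X},\gamma,\om,\Fa_0))$, set $\widetilde{Y}=\widetilde{\FX}_{d,k}(H)$ with the quotient-style morphism $\pi$ from the proof of Theorem \ref{thm:main_theorem}(2), deduce link-connectedness from Proposition \ref{prop:quotients_are_link_connected}, identify $\widetilde{Z}$ with $\widetilde{\FX}_{d,k}(H')$, show $H'\leq H$, and define $\psi$ by coarsening equivalence classes. Your justification of $H'\leq H$ via $G_{d,k}$-equivariance of morphisms on $d$-multicells is a slightly more direct argument than the paper's appeal to the Schreier-graph description of the line graph, and your explicit bookkeeping of which steps of Theorem \ref{thm:main_theorem}(2) survive without link-connectedness of $\widetilde{X}$ fills in details the paper leaves to the reader.
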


Given $(\widetilde{X},\gamma,\om,\Fa_0)\in \scrC_{d,k}$ we call the unique object satisfying Theorem \ref{thm:H_universality} the \emph{link-connected cover of $(\widetilde{X},\gamma,\om,\Fa_0)$} and denote it by $\textrm{LCC}((\widetilde{X},\gamma,\om,\Fa_0))$. 

\begin{proof}
Let $(\widetilde{X},\gamma,\om,\Fa_0)\in \scrC_{d,k}$, and denote $H=\CH((\widetilde{X},\gamma,\om,\Fa_0))$. By repeating the argument in Theorem \ref{thm:main_theorem}$(2)$, one can show that the simplicial multimap $\pi:\widetilde{\FX}_{d,k}(H)\to \widetilde{X}$ 
	\begin{equation}
		\pi([K_{\widehat{J}}g]_H) = \begin{array}{c}
		\text{the unique }j\text{-multicell contained}\\
		\text{in }g.\Fa_0\text{ whose color is }J
		\end{array}.
	\end{equation}
	is a surjective morphism $\scrC_{d,k}$ which is injective at the level of $d$-cells. Furthermore, by Proposition \ref{prop:quotients_are_link_connected}, $\widetilde{\FX}_{d,k}(H)$ is link-connected. 
	
	Next, we show that $(\widetilde{\FX}_{d,k}(H),\gamma_H,\om_H,[e]_H)$ satisfies property $(2)$. Assume that $(\widetilde{Z},\overline{\gamma},\overline{\om},\overline{\Fa}_0)\in \scrC_{d,k}^{lc}$ and $\varphi:(\widetilde{Z},\overline{\gamma},\overline{\om},\overline{\Fa}_0)\to (\widetilde{X},\gamma,\om,\Fa_0)$ is an epimorphism in the category. Since $(\widetilde{Z},\overline{\gamma},\overline{\om},\overline{\Fa}_0)\in \scrC_{d,k}^{lc}$, it follows from Theorem \ref{thm:main_theorem} that $(\widetilde{Z},\overline{\gamma},\overline{\om},\overline{\Fa}_0)$ is isomorphic to $(\widetilde{\FX}_{d,k}(H'),\gamma_{H'},\allowbreak\om_{H'},[e]_{H'})$, where $H'=\CH((\widetilde{Z},\overline{\gamma},\overline{\om},\overline{\Fa}_0))$. Since there is an epimorphism from $(\widetilde{Z},\overline{\gamma},\overline{\om},\overline{\Fa}_0)$ onto $(\widetilde{X},\gamma,\om,\Fa_0)$, there is also an epimorphism from $(\widetilde{\FX}_{d,k}(H'),\gamma_{H'},\om_{H'},\allowbreak[e]_{H'})$ onto $(\widetilde{X},\gamma,\om,\Fa_0)$. Furthermore, since the line graph of $(\widetilde{X},\gamma,\om,\Fa_0)$ is isomorphic to $\mathrm{Sch}(G_{d,k}/H;S)$, where $S=\{\alpha_i^l ~:~ i\in \ls d-1\rs,\, l\in [k-1]\}$, we must have that $H'\leq H$. Consequently, there exists an epimorphism from $(\widetilde{\FX}_{d,k}(H'),\gamma_{H'},{H'},[e]_{H'})$ onto $(\widetilde{\FX}_{d,k}(H),\gamma_H,\om_H,[e]_H)$, given by $\psi([K_{\widehat{J}}g]_{H'})=[K_{\widehat{J}}g]_H$. 
	
	The uniqueness of the object in the theorem up to isomorphism follows from property $(2)$. 
\end{proof}

Let $(\widetilde{X},\gamma,\om,\Fa_0)\in \scrC_{d,k}$ and denote $H=\CH((\widetilde{X},\gamma,\om,\Fa_0))$. Due to the last theorem, the link-connected cover of $(\widetilde{X},\gamma,\om,\Fa_0)$, denoted $\textrm{LCC}((\widetilde{X},\gamma,\om,\Fa_0))$ is given by $(\widetilde{\FX}_{d,k}(H),\gamma_H,\om_H,[e]_H)$. As the explicit construction of the subgroup $H$ associated with an object $(\widetilde{X},\gamma,\om,\Fa_0)$ might be somewhat involved, let us point out a direct combinatorial way to construct $LCC((\widetilde{X},\gamma,\om,\Fa_0))$ out of $(\widetilde{X},\gamma,\om,\Fa_0)$. 

\paragraph{The algorithm.} Let $(\widetilde{X},\gamma,\om,\Fa_0)\in\scrC_{d,k}$ with $\widetilde{X}=(X,\sm,\sg)$. \\
	\indent	$\bullet$ Run over $j$ from $d-2$ to $-1$ \\
	\indent\indent$\bullet$ Run over $\Fb\in X_\sm^j$\\
	\indent\indent\indent$\bullet$ If $\lk_{\widetilde{X}}(\Fb)$ is not connected\\
	\indent\indent\indent\indent$\bullet$ split $\Fb$ into $l$ new copies $\Fb_1,\ldots,\Fb_l$, where $l$ is the number of connected\\
	\indent\indent\indent\indent\quad component of $\lk_{\widetilde{X}}(\Fb)$ and change the multiplicity function $\sm$ accordingly. \\
	\indent\indent\indent\indent$\bullet$ change the gluing function $\sg$, so that each of the multicells in $\widetilde{X}$, associated with\\
	\indent\indent\indent\indent\quad the $i$-th connected component of $\lk_{\widetilde{X}}(\Fb)$, are glued to $\Fb_i$ instead of $\Fb$,\\
	\indent\indent\indent\indent\quad for every $1\leq i\leq l$. \\
	
	Note that by following the above algorithm, one obtain a new multicomplex in which the line graph is not changed and the link of each multicell is connected, that is, we recover the link-connected cover of $(\widetilde{X},\gamma,\om,\Fa_0)$.

%**********************************************************************************************
%**********************************************************************************************

\section{Further relations between subgroups and multicomplexes}\label{sec:further_relations}

In this section we wish to study further relations between subgroups of $G_{d,k}$ and elements of $\scrC_{d,k}$. More precisely, we study the following questions:
\begin{enumerate}
	\item When does $\FX_{d,k}(H)$ have a natural representation as a nerve complex?
	\item When is the multicomplex associated with a subgroup $H$ actually a complex?
	\item For which subgroups $H$ is the multicomplex upper $k$-regular? 
	\item When is the $(d-1)$-skeleton of $\widetilde{\FX}_{d,k}$ complete?
\end{enumerate}

%**********************************************************************************************

\subsection{Representation of $\FX_{d,k}(H)$ as a nerve complex}\label{subsec:representation_as_nerve}

\begin{theorem}\label{thm:representation_as_a_nerve_complex}
	Let $H\leq G_{d,k}$. For $v=[K_{\widehat{i}}g]_H \in \FX_{d,k}^0(X)$, define $\CA_v = \{g'\in G_{d,k} ~:~ [K_{\widehat{i}}g]_H = [K_{\widehat{i}}g']_H\}$ and let $\CA = (\CA_v)_{v\in \FX_{d,k}^0(X)}$. Then 
	\begin{equation}
		\FX_{d,k}(H) = \CN(\CA),
	\end{equation} 
	where $\CN(\CA)$ is the nerve complex defined in Definition \ref{def:nerve_complex}, i.e., the base complex of $\widetilde{\FX}_{d,k}$ is equal to the nerve complex of the system $\CA$. 
\end{theorem}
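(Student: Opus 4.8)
The plan is to prove equality of the two simplicial complexes by showing that they share the same vertex set and the same collection of cells. Both $\FX_{d,k}(H)$ and $\CN(\CA)$ have vertex set $\CM^0(H)=\FX_{d,k}^0(H)$: the former by construction, the latter because it is indexed by this set and each $\CA_v$ is nonempty (it contains any representative of $v$). Since both objects are genuine simplicial complexes, it suffices to check that a finite subset $\si=\{v_0,\ldots,v_j\}\subseteq \CM^0(H)$ is a cell of one if and only if it is a cell of the other.

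The key observation I would isolate first is an intrinsic description of $\CA_v$. For a vertex $v$ of color $i:=\gamma_H(v)$, write $\pi_i:G_{d,k}\to \CM^0(H)$ for the map $\pi_i(g)=[K_{\widehat{i}}g]_H$ (well defined by Lemma \ref{lem:well_defind_map_for_double_cosets}). Directly from the definition, $\CA_v$ is exactly the fiber $\pi_i^{-1}(v)=\{g'\in G_{d,k}:[K_{\widehat{i}}g']_H=v\}$; equivalently $\CA_v=K_{\widehat{i}}gH$ for any representative $[K_{\widehat{i}}g]_H=v$, since $[K_{\widehat{i}}g']_H=[K_{\widehat{i}}g]_H$ holds precisely when the $H$-orbits of the cosets $K_{\widehat{i}}g'$ and $K_{\widehat{i}}g$ coincide, i.e.\ when $g'\in K_{\widehat{i}}gH$. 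This is the place where the definition of the equivalence relation on $\CM^0(H)$ enters, and establishing it cleanly is the technical heart of the argument.

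With this in hand, both cell conditions unfold to the same statement. On the one hand, by the definition of $\FX_{d,k}(H)$ via $\Phi$, the subset $\si$ is a cell if and only if the $v_r$ carry distinct colors $i_0,\ldots,i_j$ and there is a single $g\in G_{d,k}$ with $v_r=[K_{\widehat{i_r}}g]_H$ for all $r$, i.e.\ $\si=\Phi([K_{\widehat{J}}g]_H)$ with $J=\{i_0,\ldots,i_j\}$. On the other hand, $\si$ is a cell of $\CN(\CA)$ if and only if $\bigcap_{r}\CA_{v_r}\neq\emptyset$, which by the fiber description means exactly that there is a common $g^{*}$ with $[K_{\widehat{i_r}}g^{*}]_H=v_r$ for every $r$; taking $g=g^{*}$ shows the two conditions agree. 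The one point requiring care — and the one I would flag as the main (though minor) obstacle — is that the nerve condition already forces the colors $i_r$ to be distinct: if $v_r\neq v_s$ had the same color $i$, then $\CA_{v_r}$ and $\CA_{v_s}$ would be distinct fibers of $\pi_i$, hence disjoint, so $\bigcap_r\CA_{v_r}$ would be empty. This matches the fact that every cell of $\FX_{d,k}(H)$ has vertices of distinct colors, and completes the identification $\FX_{d,k}(H)=\CN(\CA)$.
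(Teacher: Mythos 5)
Your proof is correct and follows essentially the same route as the paper's: both directions amount to unfolding the definition of $\Phi$ and observing that a common representative $g$ with $v_r=[K_{\widehat{i_r}}g]_H$ for all $r$ is precisely an element of $\bigcap_r \CA_{v_r}$. The one point where you are more careful than the paper is in checking that a nonempty intersection forces the colors of the $v_r$ to be distinct (since two distinct vertices of the same color have disjoint fibers $\CA_v$), a detail the paper's reverse inclusion leaves implicit when it forms the multicell $[K_{\widehat{J}}g]_H$ with $J=\{i_0,\ldots,i_j\}$ of size $j+1$.
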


\begin{remark}
	Combining Theorem \ref{thm:from_subgroups_to_multicomplexes} and Theorem \ref{thm:representation_as_a_nerve_complex} with $H=\langle e\rangle$, we obtain Theorem \ref{thm:nerve_version_of_T_d_k}.
\end{remark}

\begin{proof}
	Observe that the $0$-cells of both complexes are $\FX^0_{d,k}(H)=\CM^0(H)$ by definition. Given $\si\in \FX_{d,k}(H)$, one can find $g\in G_{d,k}$ and $i_0,\ldots,i_j\in \ls d\rs$ such that $\si = \{[K_{\widehat{i_0}}g]_H,\ldots,[K_{\widehat{i_j}}g]_H\}$  and therefore 
	\begin{equation}
		g\in \bigcap_{r=0}^j \CA_{v_r}, 
	\end{equation}
	where we denote $v_r=[K_{\widehat{i_r}}g]_H$ for $0\leq r\leq j$. In particular $\bigcap_{r=0}^j \CA_{v_r}\neq \emptyset$, which implies $\si=\{v_0,\ldots,v_j\}\in \CN(\CA)$. 
	
	Turning to prove inclusion in the other direction, assume that $\si=\{v_0,\ldots,v_j\}$, with $v_r = [K_{\widehat{i_r}}g_r]_H$ for $0\leq r\leq j$ is a cell in $\CN(\CA)$. Then, by the definition of the nerve complex $\emptyset \neq \bigcap_{r=0}^j \CA_{v_r}$, which implies that one can find $g\in G_{d,k}$ such that $[K_{\widehat{i_r}}g]_H=[K_{\widehat{i_r}}g_r]_H$, or equivalently $v_r=[K_{\widehat{i_r}}g]_H$ for every $0\leq r\leq j$. Denoting $J=\{i_0,\ldots,i_j\}$, the multicell $[K_{\widehat{J}}g]_H$ in $\widetilde{\FX}_{d,k}(H)$, has $\{[K_{\widehat{i_0}}g]_H,\ldots,[K_{\widehat{i_j}}g]_H\}=\{v_0,\ldots,v_j\}=\si$ as a corresponding cell in $\FX_{d,k}(H)$, which gives $\si\in \FX_{d,k}(H)$.  
\end{proof}

%**********************************************************************************************

\subsection{The intersection property of subsgroups}\label{subsec:the_intersection_property}

Let $H\leq G_{d,k}$ and $(\widetilde{\FX}_{d,k}(H),\gamma_H,\om_H,[e]_H)\in \scrC_{d,k}$ the corresponding object in the category, with $\widetilde{\FX}_{d,k}(H)=(\FX_{d,k}(H),\sm_H,\sg_H)$.
Recall that a multicell in $\widetilde{\FX}_{d,k}(H)$ is of the form $[K_{\widehat{J}}g]_H$ for some $J=\{i_0,\ldots,i_j\}\subseteq \ls d\rs$ and $g\in G_{d,k}$, and that the corresponding cell in $\FX_{d,k}(H)$ is $\{[K_{\widehat{i_0}}g]_H,\ldots,[K_{\widehat{i_j}}g]_H\}$. In particular, it was shown in Lemma \ref{lem:well_defind_map_for_double_cosets} that $[K_{\widehat{J}}g]_H = [K_{\widehat{J}}g']_H$ implies $[K_{\widehat{i_r}}g]_H = [K_{\widehat{i_r}}g']_H$ for every $0\leq r\leq j$. The other direction does not always hold, i.e., it is possible that $[K_{\widehat{i_r}}g]_H = [K_{\widehat{i_r}}g']_H$ for every $0\leq r\leq j$, but $[K_{\widehat{J}}g]_H \neq [K_{\widehat{J}}g']_H$. This is in fact the source of multiplicity in the multicomplex, namely, the multiplicity of a cell $\{[K_{\widehat{i_0}}g]_H,\ldots,[K_{\widehat{i_j}}g]_H\}$ equals to the number of equivalence class $[K_{\widehat{J}}g']_H$ with $J=\{i_0,\ldots,i_j\}$ whose corresponding cell is $\{[K_{\widehat{i_0}}g]_H,\ldots,[K_{\widehat{i_j}}g]_H\}$. 

\begin{corollary} \label{cor:what_does_it_mean_to_be_a_complex}
	$(\widetilde{\FX}_{d,k}(H),\gamma_H,\om_H,[e]_H)\in \scrC_{d,k}$ is a complex, namely, $\sm_H\equiv 1$, if and only if for every choice of $J=\{i_0,\ldots,i_j\}$ and $g,g'\in G_{d,k}$
\begin{equation}
	[K_{\widehat{J}}g]_H = [K_{\widehat{J}}g']_H \qquad \Leftrightarrow \qquad 
	[K_{\widehat{i_r}}g]_H = [K_{\widehat{i_r}}g']_H,\,\,\forall 0\leq r\leq j.
\end{equation}
\end{corollary}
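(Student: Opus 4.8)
The plan is to reduce the whole statement to the injectivity of the map $\Phi$ on $\CM(H)$, and then to unwind that injectivity with the coloring $\gamma_H$. Recall from Definition \ref{defn:multiplicity_for_H} that $\sm_H(\si)=|\Phi^{-1}(\si)|$, and that $\Phi$ is surjective onto $\FX_{d,k}(H)$ by the very definition of the complex, so $\sm_H(\si)\geq 1$ for every cell $\si$. Consequently the condition $\sm_H\equiv 1$ is equivalent to $\Phi$ being injective, i.e.\ to the implication that $\Phi([K_{\widehat{J}}g]_H)=\Phi([K_{\widehat{J'}}g']_H)$ forces $[K_{\widehat{J}}g]_H=[K_{\widehat{J'}}g']_H$. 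The first step is simply to record this reformulation.

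The second step is to analyze the equation $\Phi([K_{\widehat{J}}g]_H)=\Phi([K_{\widehat{J'}}g']_H)$, where both sides are sets of $0$-cells. By the definition of $\gamma_H$ in the proof of Lemma \ref{lem:the_complex_FX_d,k(H)} the vertex $[K_{\widehat{i}}g]_H$ has color $i$, and this color is well defined by Lemma \ref{lem:well_defind_map_for_double_cosets}; hence the set of colors occurring in $\Phi([K_{\widehat{J}}g]_H)=\{[K_{\widehat{i}}g]_H : i\in J\}$ is exactly $J$. Equality of the two images therefore forces $J=J'$, and, since the colors of the $0$-cells inside a single cell are pairwise distinct, matching vertices of the same color forces $[K_{\widehat{i}}g]_H=[K_{\widehat{i}}g']_H$ for every $i\in J$. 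The converse is immediate. Writing $J=\{i_0,\ldots,i_j\}$, this yields
\begin{equation*}
	\Phi([K_{\widehat{J}}g]_H)=\Phi([K_{\widehat{J}}g']_H)\quad\Longleftrightarrow\quad [K_{\widehat{i_r}}g]_H=[K_{\widehat{i_r}}g']_H\ \text{ for all }0\leq r\leq j.
\end{equation*}

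Combining the two steps, $\Phi$ is injective precisely when, whenever the right-hand side above holds, one also has $[K_{\widehat{J}}g]_H=[K_{\widehat{J}}g']_H$; this is exactly the $\Leftarrow$ implication of the displayed biconditional in the corollary. Since the $\Rightarrow$ implication always holds by Lemma \ref{lem:well_defind_map_for_double_cosets}, the full biconditional holding for all $J,g,g'$ is equivalent to the injectivity of $\Phi$, and hence to $\sm_H\equiv 1$, as claimed.

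I do not expect a genuine obstacle: the argument is a direct unwinding of the definitions of $\sm_H$ and $\Phi$ together with color bookkeeping, and all the nontrivial input is already available (well-definedness and the $\Rightarrow$ direction from Lemma \ref{lem:well_defind_map_for_double_cosets}, the color of a vertex from Lemma \ref{lem:the_complex_FX_d,k(H)}). The only point needing minor care is the reduction from equal images to ``$J=J'$ together with per-vertex equality,'' which rests on the fact that within a single cell a $0$-cell is determined by its color.
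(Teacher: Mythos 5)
Your proof is correct and follows essentially the same route as the paper: the paper states this corollary as an immediate consequence of the observation that the multiplicity of a cell $\{[K_{\widehat{i_0}}g]_H,\ldots,[K_{\widehat{i_j}}g]_H\}$ equals the number of equivalence classes $[K_{\widehat{J}}g']_H$ mapping to it under $\Phi$, which is exactly your reduction to injectivity of $\Phi$. Your additional color-bookkeeping step (equal images force $J=J'$ and per-vertex equality) is a correct and slightly more explicit unwinding of what the paper leaves implicit.
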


\begin{definition}
	We say that $H \leq G_{d,k}$ satisfies the intersection property if for every $0\leq j\leq d$, every $J=\{i_0,\ldots,i_j\}\subseteq \ls d\rs$ and $g\in G_{d,k}$ 
	\begin{equation}\label{eq:intersection_property}
		\bigcap_{r=0}^j \CA_{[K_{\widehat{i_r}}g]_H} =  \CA_{[K_{\widehat{J}}g]_H},
	\end{equation}
	where for a multicell $[K_{\widehat{J}}g]_H$ we denote 
	\begin{equation}
		\CA_{[K_{\widehat{J}}g]_H} = \{g'\in G_{d,k} ~:~ [K_{\widehat{J}}g]_H = [K_{\widehat{j}}g']_H\}.
	\end{equation}
	Note that for $0$-cells $\CA_{[K_{\widehat{i}}g]_H}$ for the set used to represent the $d$-complex $\FX_{d,k}(H)$ as a nerve complex. 
\end{definition}

\begin{proposition}\label{prop:condition_for_being_a_complex}
	Let $H \leq G_{d,k}$. Then, $\widetilde{\FX}_{d,k}(H)$ is a simplicial complex if and only if $H$ satisfies the intersection property. In particular, if $H\triangleleft G_{d,k}$, then $\widetilde{\FX}_{d,k}(H)$ is a simplicial complex if and only if for every choice of $j$ and $J$ as above $\bigcap_{r=0}^j \CA_{[K_{\widehat{i_r}}]_H}= \CA_{[K_{\widehat{J}}]_H}$, i.e., one need to check \eqref{eq:intersection_property} only for $g=e$. 
\end{proposition}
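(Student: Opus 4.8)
The plan is to deduce the first assertion directly from Corollary \ref{cor:what_does_it_mean_to_be_a_complex} by rephrasing both sides of \eqref{eq:intersection_property} in terms of the membership conditions defining the sets $\CA$. First I would record the tautological translations: for any $g'$ one has $g'\in \CA_{[K_{\widehat{J}}g]_H}$ iff $[K_{\widehat{J}}g']_H = [K_{\widehat{J}}g]_H$, and $g'\in \bigcap_{r=0}^j \CA_{[K_{\widehat{i_r}}g]_H}$ iff $[K_{\widehat{i_r}}g']_H = [K_{\widehat{i_r}}g]_H$ for every $r$. Under this dictionary the inclusion $\CA_{[K_{\widehat{J}}g]_H}\subseteq \bigcap_{r}\CA_{[K_{\widehat{i_r}}g]_H}$ is exactly the implication granted for free by Lemma \ref{lem:well_defind_map_for_double_cosets}, so the intersection property reduces to the reverse inclusion, namely
\[
	[K_{\widehat{i_r}}g']_H = [K_{\widehat{i_r}}g]_H \,\,\forall r \quad\Longrightarrow\quad [K_{\widehat{J}}g']_H = [K_{\widehat{J}}g]_H .
\]
This is precisely the nontrivial direction of the equivalence in Corollary \ref{cor:what_does_it_mean_to_be_a_complex} (the implication in the opposite direction being, once more, Lemma \ref{lem:well_defind_map_for_double_cosets}). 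Since that corollary characterizes $\sm_H\equiv 1$, I conclude that $\widetilde{\FX}_{d,k}(H)$ is a simplicial complex iff $H$ satisfies the intersection property.

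For the normal case, the key computation I would carry out is to identify each $\CA_{[K_{\widehat{J}}g]_H}$ with a genuine coset. By definition $[K_{\widehat{J}}g]_H = [K_{\widehat{J}}g']_H$ holds iff $\{K_{\widehat{J}}gh : h\in H\} = \{K_{\widehat{J}}g'h : h\in H\}$ as collections of right $K_{\widehat{J}}$-cosets; since distinct right cosets are disjoint, this is equivalent to the equality of their unions $K_{\widehat{J}}gH = K_{\widehat{J}}g'H$. Hence $\CA_{[K_{\widehat{J}}g]_H} = K_{\widehat{J}}gH$. When $H\triangleleft G_{d,k}$ we have $gH = Hg$, so $K_{\widehat{J}}gH = (K_{\widehat{J}}H)g$, and $K_{\widehat{J}}H$ is a subgroup (product of a subgroup with a normal subgroup); writing $L_{\widehat{J}} := K_{\widehat{J}}H$ this gives $\CA_{[K_{\widehat{J}}g]_H} = L_{\widehat{J}}\,g$, and likewise $\CA_{[K_{\widehat{i_r}}g]_H} = L_{\widehat{i_r}}\,g$.

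Finally I would use that right translation commutes with intersection: $\bigcap_{r} L_{\widehat{i_r}}g = \big(\bigcap_{r} L_{\widehat{i_r}}\big)g$, because $x\in L_{\widehat{i_r}}g \iff xg^{-1}\in L_{\widehat{i_r}}$. Thus \eqref{eq:intersection_property} for the pair $(J,g)$ reads $\big(\bigcap_{r} L_{\widehat{i_r}}\big)g = L_{\widehat{J}}\,g$, and right-multiplying by $g^{-1}$ shows this is equivalent to $\bigcap_{r} L_{\widehat{i_r}} = L_{\widehat{J}}$, which is exactly \eqref{eq:intersection_property} at $g=e$ (since $\CA_{[K_{\widehat{i_r}}]_H} = L_{\widehat{i_r}}$ and $\CA_{[K_{\widehat{J}}]_H} = L_{\widehat{J}}$). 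As the independence of $g$ is the whole content of the ``in particular'' clause, the proof is complete. The main technical point to handle carefully is the identification $\CA_{[K_{\widehat{J}}g]_H} = K_{\widehat{J}}gH$ together with the fact that normality makes $K_{\widehat{J}}H$ an honest subgroup, so that all the sets in sight are genuine right cosets and the $g$-translation can be stripped uniformly; the remaining steps are a direct transcription of the definitions.
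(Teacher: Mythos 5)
Your proposal is correct and follows essentially the same route as the paper: the main equivalence is obtained by translating the intersection property through Lemma \ref{lem:well_defind_map_for_double_cosets} and Corollary \ref{cor:what_does_it_mean_to_be_a_complex}, and the normal-subgroup case is reduced to $g=e$ by right translation. Your identification $\CA_{[K_{\widehat{J}}g]_H}=K_{\widehat{J}}gH=(K_{\widehat{J}}H)g$ is a cleaner and more explicit way of writing the paper's translation step, but it is the same underlying argument.
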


\begin{proof}

		First, assume that $\widetilde{\FX}_{d,k}(H)$ is a complex and note that by Lemma \ref{lem:well_defind_map_for_double_cosets}, $\CA_{[K_{\widehat{J}}g]_H} \subseteq \bigcap_{r=0}^j \CA_{[K_{\widehat{i_r}}g]_H}$ for every $J=\{i_0,\ldots,i_j\}\subset \ls d\rs$ and $g\in G_{d,k}$. Thus, it suffices to prove inclusion in the other direction. To this end, let $J=\{i_0,\ldots,i_j\}\subset \ls d\rs$ and $g\in G_{d,k}$, and assume that $g' \in  \bigcap_{r=0}^j \CA_{[K_{\widehat{i_r}}g]_H}$. Then  $[K_{\widehat{i_r}}g']_H=[K_{\widehat{i_r}}g]_H$ for $0\leq r\leq j$ and hence, by Corollary \ref{cor:what_does_it_mean_to_be_a_complex}, $[K_{\widehat{J}}g]_H = [K_{\widehat{J}}g']_H$, and in particular $g'\in \CA_{[K_{\widehat{J}}g]_H}$. 
		
	Next, assume that $H$ satisfies the intersection property and let $g,g'\in G_{d,k}$ and $J=\{i_0,\ldots,i_j\}\subset \ls d\rs$ such that $[K_{\widehat{i_r}}g]_H = [K_{\widehat{i_r}}g']_H$ for every $0\leq r\leq j$, that is $g'\in \CA_{[K_{\widehat{i_r}}g]_H}$ for every $0\leq r\leq j$. The intersection property implies that $g'\in \CA_{[K_{\widehat{J}}g]_H}$, i.e., $[K_{\widehat{J}}g]_H = [K_{\widehat{J}}g']_H$,	and hence, by Corollary \ref{cor:what_does_it_mean_to_be_a_complex}, that $\widetilde{\FX}_{d,k}(H)$ is a complex. 
	
	Finally, assume that $H\triangleleft G_{d,k}$ and that $H$ satisfies the intersection property for every $J=\{i_0,\ldots,i_j\}\subset \ls d\rs$ with respect to neutral element $e$. Since $H$ is a normal subgroup, there is a natural action of $G_{d,k}$ from the right on the cosets (which are the group elements of $G_{d,k}/H$) and hence for an arbitrary element $g\in G_{d,k}$ we have 
	\begin{equation}
		[K_{\widehat{J}}g]_H = [K_{\widehat{J}}]_H.g^{-1} = \bigcap_{r=0}^j [K_{\widehat{i_r}}]_H.g^{-1} = \bigcap_{r=0}^j [K_{\widehat{i_r}}g]_H,
	\end{equation}
	which proves the intersection property and hence that $\widetilde{\FX}_{d,k}(H)$ is a complex. 
\end{proof}

%**********************************************************************************************

\subsection{Regularity of the multicomplex}
By Claim \ref{clm:degree_divides_k}, the degree of every $(d-1)$-multicell in an object of $\scrC_{d,k}$ always divides $k$. We turn to discuss which conditions on $H$ guarantee that these degrees are exactly $k$.

\begin{lemma}
	Given $H\leq G_{d,k}$, the multicomplex $(\widetilde{\FX}_{d,k}(H),\gamma_H,\om_H,[e]_H)$ is upper regular of degree $k$ if and only if 
	\begin{equation}\label{eq:condition_for_regularity}
		\langle \alpha_i\rangle \cap gHg^{-1} = \{e\},\qquad \forall i\in \ls d\rs,\,g\in G_{d,k}.
	\end{equation}
	In particular, if $H$ is a normal subgroup of $G_{d,k}$, the complex is upper $k$-regular if and only if $\langle \alpha_i \rangle \cap H = \{e\}$ for every $i\in \ls d\rs$, which happens if and only if for every $i\in \ls d\rs$, the order of the image of $\alpha_i$ in $G_{d,k}/H$ is exactly $k$.
\end{lemma}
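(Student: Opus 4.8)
The plan is to compute the degree of an arbitrary $(d-1)$-multicell directly and read off when it equals $k$. First I would recall that a $(d-1)$-multicell of $\widetilde{\FX}_{d,k}(H)$ is an element of $\CM^{d-1}(H)$, i.e.\ a class $[K_{\widehat{J}}g]_H$ with $|J|=d$; since $\ls d\rs$ has $d+1$ elements, $\widehat{J}=\{i\}$ is a singleton and the multicell has the form $[K_i g]_H$ with $K_i=\langle\alpha_i\rangle$. By Claim \ref{clm:the_comulticells_of_a_d-1_multicell}, the $d$-multicells containing $[K_i g]_H$ are exactly $\{[\alpha_i^l g]_H ~:~ l\in\ls k-1\rs\}$, so $\deg_{\widetilde{\FX}_{d,k}(H)}([K_i g]_H)$ equals the number of \emph{distinct} classes among these $k$ candidates.

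Next I would determine when two such classes coincide. Recalling that $d$-multicells are identified with left cosets, so that $[g']_H=[g'']_H$ iff $g'H=g''H$, one obtains
\[
	[\alpha_i^l g]_H = [\alpha_i^{l'} g]_H \iff \alpha_i^{l'-l}\in gHg^{-1} \iff \alpha_i^{l'-l}\in \langle\alpha_i\rangle\cap gHg^{-1}.
\]
Thus $l$ and $l'$ yield the same $d$-multicell precisely when $\alpha_i^l$ and $\alpha_i^{l'}$ lie in the same coset of the subgroup $N:=\langle\alpha_i\rangle\cap gHg^{-1}$ inside the cyclic group $\langle\alpha_i\rangle\cong C_k$. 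The map $\alpha_i^l\mapsto[\alpha_i^l g]_H$ therefore has fibers equal to the cosets of $N$, so its image (which by the Claim is exactly $\delta([K_i g]_H)$) has cardinality $[\langle\alpha_i\rangle:N]$. Hence $\deg([K_i g]_H)=[\langle\alpha_i\rangle:N]$, a divisor of $k$ consistent with Claim \ref{clm:degree_divides_k}. Since $|\langle\alpha_i\rangle|=k$, this index equals $k$ if and only if $N=\{e\}$. Ranging over all $(d-1)$-multicells, i.e.\ over all $i\in\ls d\rs$ and $g\in G_{d,k}$, gives that $\widetilde{\FX}_{d,k}(H)$ is upper $k$-regular exactly when \eqref{eq:condition_for_regularity} holds.

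For the normal case, $gHg^{-1}=H$ for every $g$, so the condition reduces to $\langle\alpha_i\rangle\cap H=\{e\}$ for each $i\in\ls d\rs$. Finally I would translate this into the order statement: since $\alpha_i^k=e\in H$, the order of the image $\alpha_i H$ in $G_{d,k}/H$ divides $k$, and it equals $k$ if and only if $\alpha_i^m\notin H$ for all $1\le m\le k-1$, which is precisely the assertion $\langle\alpha_i\rangle\cap H=\{e\}$.

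I do not expect a serious obstacle here; the entire argument is a short counting computation. The only point requiring care is the bookkeeping in the coincidence condition—correctly producing the conjugate $gHg^{-1}$ from the left-coset description and intersecting it with $\langle\alpha_i\rangle$—together with the observation that counting distinct images of $l\mapsto[\alpha_i^l g]_H$ is exactly an index computation in the finite cyclic group $\langle\alpha_i\rangle$.
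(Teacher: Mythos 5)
Your proposal is correct and follows essentially the same route as the paper: identify the $d$-multicells containing $[K_i g]_H$ via Claim \ref{clm:the_comulticells_of_a_d-1_multicell}, observe that $[\alpha_i^l g]_H=[\alpha_i^{l'}g]_H$ iff $\alpha_i^{l'-l}\in gHg^{-1}$, and conclude that full degree $k$ is equivalent to $\langle\alpha_i\rangle\cap gHg^{-1}=\{e\}$. Your version merely adds the explicit index computation $\deg=[\langle\alpha_i\rangle:N]$ and spells out the normal-subgroup reduction, both of which the paper leaves implicit.
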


\begin{proof}
	Let $H\leq G_{d,k}$. Recall that a $(d-1)$-multicell of $\widetilde{\FX}_{d,k}(H)$ is of the form $[K_{i}g]_H$ for some $i\in \ls d\rs$ and $g\in G_{d,k}$, and that by Claim \ref{clm:the_comulticells_of_a_d-1_multicell}, the $d$-multicells containing it are $\{[\alpha_i^lg]_H\}_{l\in \ls k-1\rs}$. Consequently, the degree of $[K_{\widehat{i}}g]_H$ is at most $k$, and is precisely $k$ if and only if 
\begin{equation}
	[\alpha_i^lg]_H\neq [\alpha_i^r g]_H,\qquad \forall r,l\in \ls k-1 \rs \text{ such that }r\neq l.
\end{equation}	
	However, for distinct $r,l\in \ls k-1 \rs$ we have $[\alpha_i^lg]_H = [\alpha_i^rg]_H$ if and only if $\alpha_i^{l-r}\in gHg^{-1}$. 
\end{proof}

%**********************************************************************************************

\subsection{Multicomplexes with full skeleton}

The skeleton is complete if and only if for every $0\leq j\leq d$, every $J=\{i_0,\ldots,i_j\}\subset \ls d\rs$ and every $g_0,\ldots,g_j \in G_{d,k}$, one can find $g\in G_{d,k}$ such that 
\begin{equation}
	[K_{\widehat{i_r}}g_r]_H = [K_{\widehat{i_r}}g]_H,\qquad \forall 0\leq r\leq j. 
\end{equation}

The proof of this fact is left to the reader.

%**********************************************************************************************
%**************************************************************************************

\section{Examples}\label{sec:examples}

\subsection{The subgroup $M_{d,k}$}

For a word $w=\alpha_{i_m}^{l_m}\ldots \alpha_{i_1}^{l_1}$ representing an element in $G_{d,k}$ and $i\in \ls d\rs$ define $\theta_i(w)$ to be the sum of the exponents of $\alpha_i$ in $w$ modulo $k$, namely, $\theta_i(w) = \sum_{j=1}^{m} l_j\ind_{i_j=i} \text{ mod }k$. Since $\alpha_i^k$ is the trivial group element for every $i\in \ls d\rs$, it follows that $\theta_i(w)=\theta_i(w')$ for any pair of words $w,w'$ representing the same group element. Thus we can define $\theta_i(g)$, for $g\in G_{d,k}$ and $i\in \ls d\rs$, using any word representing it. Also, note that $\theta_i$ for $i\in \ls d\rs$ is a homomorphism from $G_{d,k}$ to $\BZ/k\BZ$. 

\begin{proposition}
Let 
\begin{equation}
	M_{d,k} = \{g\in G_{d,k} ~:~ \theta_i(g)=0,\,\forall i\in\ls d\rs\}.
\end{equation}
Then, $M=M_{d,k}$ is a normal subgroup of $G_{d,k}$ of index $k^{d+1}$, satisfying $G_{d,k}/M_{d,k}\cong (\BZ/k\BZ)^{d+1}$. In addition, $\widetilde{\FX}_{d,k}(M_{d,k})$ is isomorphic the complete $(d+1)$-partite $d$-complex all of whose parts have size $k$. 
\end{proposition}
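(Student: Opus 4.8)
The plan is to route the entire argument through the homomorphism $\theta = (\theta_0,\ldots,\theta_d) : G_{d,k} \to (\BZ/k\BZ)^{d+1}$, which is a group homomorphism since each coordinate $\theta_i$ is (as noted just before the proposition). By definition $M_{d,k} = \ker\theta$, so $M_{d,k}$ is automatically normal, which handles normality for free. For surjectivity I would observe that $\theta(\alpha_i)$ is the $i$-th standard basis vector of $(\BZ/k\BZ)^{d+1}$, so the image contains a generating set; the first isomorphism theorem then yields $G_{d,k}/M_{d,k} \cong (\BZ/k\BZ)^{d+1}$ and $[G_{d,k}:M_{d,k}] = k^{d+1}$. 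This disposes of the group-theoretic assertions.

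The heart of the matter is identifying the multicells through $\theta$. Writing $M = M_{d,k}$ and using normality ($gM = Mg$, so $K_{\widehat J}\,gM = (K_{\widehat J} M)g$ is a right coset of the subgroup $K_{\widehat J} M = \theta^{-1}(\theta(K_{\widehat J}))$, and $\theta(K_{\widehat J}) = \{v : v_i = 0 \text{ for all } i \in J\}$), one checks the dictionary
\[
    [K_{\widehat J}\,g]_M = [K_{\widehat J}\,g']_M \iff \theta_i(g) = \theta_i(g') \text{ for all } i \in J.
\]
In particular the color-$i$ vertex $[K_{\widehat i}\,g]_M$ is faithfully labelled by $\theta_i(g) \in \BZ/k\BZ$, so there are exactly $k$ vertices of each color and $\CM^0(M)$ splits into $d+1$ color classes of size $k$; taking $J = \ls d\rs$ shows the $d$-multicells $[g]_M$ are labelled bijectively by $\theta(g) \in (\BZ/k\BZ)^{d+1}$, giving $k^{d+1}$ of them.

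Next I would verify that $\widetilde{\FX}_{d,k}(M)$ is a genuine simplicial complex (multiplicity one everywhere) via the intersection property. Since $M$ is normal, Proposition \ref{prop:condition_for_being_a_complex} reduces this to $g = e$, where the dictionary above gives $\CA_{[K_{\widehat i}]_M} = \ker\theta_i$ and $\CA_{[K_{\widehat J}]_M} = \bigcap_{i\in J}\ker\theta_i$, so that $\bigcap_{r=0}^j \CA_{[K_{\widehat{i_r}}]_M} = \CA_{[K_{\widehat J}]_M}$ holds exactly. Hence $\sm_M \equiv 1$ and $\widetilde{\FX}_{d,k}(M)$ is a complex.

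Finally I would assemble the isomorphism with the complete $(d+1)$-partite complex on parts of size $k$. Declaring the color-$i$ vertex labelled $a$ to be the $a$-th vertex of the $i$-th part sets up a color-preserving bijection on $0$-cells. A $j$-cell of $\widetilde{\FX}_{d,k}(M)$ is a set $\{[K_{\widehat{i_r}}\,g]_M\}_{r=0}^j$ with distinct colors $i_0,\ldots,i_j$, i.e. a set meeting each color class at most once; conversely, by surjectivity of $\theta$ every choice of distinct colors and labels is realized by some $g$. Thus a subset of $\CM^0(M)$ is a cell precisely when it contains at most one vertex of each color, which is exactly the face condition of the complete $(d+1)$-partite complex. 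As the complex is pure and $d$-dimensional it is determined by its $0$- and $d$-cells, so the vertex bijection extends to the desired isomorphism. I expect the only real obstacle to be the bookkeeping in the double-coset computation establishing the displayed dictionary (getting the normality manipulation $K_{\widehat J}\,gM = (K_{\widehat J}M)g$ and the identification of $\theta(K_{\widehat J})$ right); once that dictionary is in place, the index count, the intersection property, and the final matching all follow cleanly.
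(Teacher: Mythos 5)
Your proposal is correct and follows essentially the same route as the paper: both arguments hinge on the dictionary $[K_{\widehat{J}}g]_M=[K_{\widehat{J}}g']_M \iff \theta_i(g)=\theta_i(g')$ for all $i\in J$, then invoke Proposition \ref{prop:condition_for_being_a_complex} to kill multiplicity and count $k$ vertices per color to identify the complete $(d+1)$-partite complex. The only (harmless) variations are that you derive the dictionary structurally via $K_{\widehat{J}}gM=(K_{\widehat{J}}M)g$ and $K_{\widehat{J}}M=\theta^{-1}(\theta(K_{\widehat{J}}))$ rather than by the paper's element manipulation, and you make the index computation via the first isomorphism theorem explicit where the paper leaves it implicit.
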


\begin{proof}
	Since $M_{d,k}=\bigcap_{i\in \ls d\rs} \ker \theta_i$, the group $M_{d,k}$ is normal. Next, we show that for every $J=\{i_0,\ldots,i_j\}\subset \ls d\rs$ and $g,g'\in G_{d,k}$ 
	\begin{equation}\label{eq:the_subgroup_M}
		[K_{\widehat{J}}g]_M = [K_{\widehat{J}}g']_M \qquad \Leftrightarrow \qquad 
		\theta_i(g)=\theta_i(g'),\, \forall i\in J.
	\end{equation}
	Indeed, assume that $[K_{\widehat{J}}g]_M=[K_{\widehat{J}}g']_M$ for some $g,g'\in G_{d,k}$. Then there exist $w\in K_{\widehat{J}}$ and $h\in M$ such that $g=wg'h$. Since $\theta_i(h'h'')=\theta_i(h')+\theta_i(h'') \text{ mod }k$ for every $i\in \ls d\rs$ and $h',h''\in M$, it follows that for every $i\in J$
	\begin{equation}
		\theta_i(g) = \theta_i(wg'h)=\theta_i(w)+\theta_i(g') + \theta_i(h) = \theta_i(g') \quad\text{ mod }k,
	\end{equation}
	where for the last equality we used the definition of $M$ and the fact that $\theta_i(w)=0$ for any $w\in K_{\widehat{J}}\subset K_{\widehat{i}}$.
	
	In the other direction, assume that $\theta_i(g)=\theta_i(g')$ for every $i\in J$, then $\theta_i(g'g^{-1})=0$ for every $i\in J$. Denoting $w=\prod_{i\in \widehat{J}} \alpha_i^{-\theta_i(g'g^{-1})}\in K_{\widehat{J}}$, it follows that $g^{-1}wg'\in M$, that is $g'=w^{-1}gh$ for some $h\in M$, which implies $[K_{\widehat{J}}g]_M = [K_{\widehat{J}}g']_M$. 

	Combining \eqref{eq:the_subgroup_M} and Proposition \ref{prop:condition_for_being_a_complex} we conclude that $\widetilde{\FX}_{d,k}(M)$ is a complex, i.e., has no multiplicity. 
	
Finally, we turn to study the structure of the complex. The $0$-cells of the complex are given by $[K_{\widehat{i}}g]_M$ for some $i\in \ls d\rs$ and $g\in G_{d,k}$. Hence, using \eqref{eq:the_subgroup_M}, there are exactly $k$ distinct $0$-cells of each color. Since the complex $\FX_{d,k}(M)$ is pure and colorable, in order to show that it is the complete $(d+1)$-partite $d$-complex, it is enough to show that the $d$-cells of $\FX_{d,k}(M)$ are all the possible $d$-cells. To this end, choose one $0$-cells from each color. Due to \eqref{eq:the_subgroup_M}, we can assume without loss of generality that the $0$-cell of color $i$ is of the form $[K_{\widehat{i}}\alpha_i^{r_i}]_M$ for some $r_i\in \ls k-1\rs$. It now follows from \eqref{eq:defn_og_Phi_j} that the $d$-cell $[\alpha_d^{r_d}\ldots\alpha_1^{r_1}\alpha_0^{r_0}]_M$ is a $d$-cell with the chosen $0$-cells. 
\end{proof}

%**********************************************************************************************

\subsection{Coxeter complexes}

Let $W$ be a group and $S=\{s_i ~:~ i\in I\}\subset W$ a set of generators of $W$, all of its elements are of order $2$. For each pair $(s,t)\in S\times S$, let $m_{st}$ denote the order of $st$. The matrix $M=(m_{st})_{s,t\in S}$ is an $S\times S$ symmetric matrix with entries in $\BN\cup\{\infty\}$, with $1$'s on the diagonal and entries which are strictly bigger than $1$ off the diagonal. A group $W$ is called a Coxeter group, if there exists $S$ as above such that $\langle S |R \rangle$ is a presentation of $W$, where $R=\{(st)^{m_{st}} ~:~ s,t\in S\}$. In this case, we call $M$ the Coxeter matrix of $(W,S)$. 

Coxeter groups have been extensively studied, see for example \cite{BB06,Dav08} and the references therein. Some of their basic properties are summarized in the following lemma.

\begin{lemma}\label{lem:coxeter_lem}[\cite{Dav08} Theorem 4.1.6] Let $\langle W,S=(s_i)_{i\in I}\rangle$ be a Coxeter group.
\begin{itemize}
	\item For every $J\subseteq I$, the group $W_J = \langle s_i ~:~ i\in J\rangle$ is a Coxeter group. 
	\item For every $J,J'\subseteq I$, we have $W_J \cap W_{J'} = W_{J\cap J'}$. 
	\item For every $J,J'\subseteq I$, we have $\langle W_J,W_{J'}\rangle = W_{J\cup J'}$. 
\end{itemize}
\end{lemma}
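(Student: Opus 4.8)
The plan is to derive all three statements from the standard theory of the canonical (Tits) geometric representation of a Coxeter system, so I would first fix the objects I need and then dispatch the three items. Write $S_J=\{s_i : i\in J\}$, let $V=\bigoplus_{s\in S}\BR\,e_s$ carry the symmetric bilinear form $B$ determined by $B(e_s,e_t)=-\cos(\pi/m_{st})$, and let $\rho\colon W\to GL(V)$ be the representation in which each $s$ acts as the reflection $\sigma_s(v)=v-2B(e_s,v)e_s$. I take as known the two foundational facts of the theory: $\rho$ is faithful, and, writing $\Phi=\{w(e_s):w\in W,\,s\in S\}$ and splitting $\Phi=\Phi^{+}\sqcup\Phi^{-}$ into roots with nonnegative, respectively nonpositive, coordinates, one has for every $w\in W$ and every simple reflection $s_i$ the criterion $\ell_S(ws_i)<\ell_S(w)$ if and only if $w(e_{s_i})\in\Phi^{-}$, where $\ell_S$ is the word length with respect to $S$. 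The third statement is then immediate from the definitions: $\langle W_J,W_{J'}\rangle$ is generated by $S_J\cup S_{J'}=S_{J\cup J'}$, so it equals $W_{J\cup J'}$.

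For the first statement, observe that $V_J=\operatorname{span}(e_s:s\in S_J)$ is $\rho(W_J)$-invariant, and that $\rho|_{W_J}$ acting on $V_J$ sends each $s_i$ ($i\in J$) to the reflection attached to $e_{s_i}$ for the form $B|_{V_J}$; this is precisely the canonical representation of the abstract Coxeter group $\widehat{W}_J$ presented by $(S_J,M|_{J\times J})$. There is a tautological surjection $\pi\colon\widehat{W}_J\twoheadrightarrow W_J$, since the generators of $W_J$ satisfy the defining relations of $\widehat{W}_J$; composing $\pi$ with $w\mapsto\rho(w)|_{V_J}$ recovers the canonical representation of $\widehat{W}_J$, which is faithful by the quoted foundational fact. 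Hence $\pi$ is injective, so $\pi$ is an isomorphism and $(W_J,S_J)$ is a Coxeter system with Coxeter matrix $M|_{J\times J}$.

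The second statement rests on the combinatorial lemma that \emph{every reduced $S$-expression of an element of $W_J$ uses only letters of $S_J$}. I would prove this from the assertion that each $w\in W_J$ permutes $\Phi^{+}\setminus\Phi_J^{+}$, where $\Phi_J=\Phi\cap V_J$: for a simple reflection $s_j$ with $j\in J$ this holds because $s_j$ permutes $\Phi^{+}\setminus\{e_{s_j}\}$ and preserves $\Phi_J$ (if $s_j(\alpha)\in V_J$ then $\alpha=s_j(s_j(\alpha))\in V_J$), and the general case follows by induction on $\ell_{S_J}(w)$. Granting it, suppose $w=s_{i_1}\cdots s_{i_m}$ is reduced with $w\in W_J$; then $\ell_S(ws_{i_m})<\ell_S(w)$ gives $w(e_{s_{i_m}})\in\Phi^{-}$, which forces $e_{s_{i_m}}\in\Phi_J^{+}$ (otherwise $w$ would keep it in $\Phi^{+}\setminus\Phi_J^{+}$), i.e.\ $i_m\in J$; since then $ws_{i_m}\in W_J$, induction on $m$ finishes. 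With the lemma, Property 2 is formal: $W_{J\cap J'}\subseteq W_J\cap W_{J'}$ is clear, and conversely any $w\in W_J\cap W_{J'}$ admits a reduced $S$-expression whose letters lie in $J$ and in $J'$, hence in $J\cap J'$, so $w\in W_{J\cap J'}$.

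The main obstacle is not any of these three deductions but the foundational input they all share, namely the faithfulness of the geometric representation together with the root-theoretic length criterion (equivalently, Tits's solution of the word problem, or the exchange and deletion conditions). Building those from scratch is the genuinely substantial work; once they are in place, the three items follow as above, which is exactly why in the text we simply quote the statement from \cite{Dav08}.
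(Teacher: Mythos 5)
Your proposal is correct, but there is nothing in the paper to compare it against: the authors state this lemma purely as a quotation of \cite{Dav08}, Theorem 4.1.6, and give no proof. What you have written is a faithful rendering of the standard textbook argument (essentially the one in Davis, Bourbaki, and Humphreys): item (3) is formal; item (1) follows by restricting the canonical representation to $V_J=\operatorname{span}(e_s : s\in S_J)$ and using faithfulness to see that the tautological surjection from the abstract Coxeter group on $(S_J, M|_{J\times J})$ onto $W_J$ is injective; and item (2) reduces to the fact that every reduced $S$-word for an element of $W_J$ uses only letters of $S_J$, which you correctly derive from the observation that $W_J$ permutes $\Phi^{+}\setminus\Phi_J^{+}$ together with the criterion $\ell_S(ws_i)<\ell_S(w)\Leftrightarrow w(e_{s_i})\in\Phi^{-}$. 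You are also right to flag that the genuine content is concentrated in the two foundational inputs (faithfulness of the geometric representation and the root-theoretic length criterion), which is precisely why the paper outsources the whole statement to the literature rather than proving it.
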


\paragraph{The Coxeter complex.} To every Coxeter group $\langle W,S\rangle$ one can associate a simplicial complex $X(W,S)$, called the Coxeter complex, via the following procedure: The $0$-cells of the complex are defined to be the cosets $W_{\widehat{i}}w$ for $i\in I$ and $w\in W$, where $\widehat{i}=I\setminus \{i\}$. For every coset $W_{\widehat{J}}w$, with $J=\{i_0,\ldots,i_j\}\subseteq I$ add to the complex the cell $\{W_{\widehat{i_0}}w,\ldots,W_{\widehat{i_j}}w\}$. 

\begin{remark}
	In certain books (see e.g. \cite{Dav08}) the Coxeter complex is defined somewhat differently, but the outcome is simply the barycentric subdivision of the above complex. 
\end{remark}

\begin{proposition}
	Let $(W,S)$ be a Coxeter group. Let $\varphi$ be a one to one map from $\{\alpha_0,\ldots,\alpha_{|S|-1}\}$ onto $S$, hence inducing an epimorphism $\widetilde{\varphi}$ from $G_{|S|-1,2}$ onto $W$, and let $H=\ker \widetilde{\varphi}$. Then 
	\begin{equation}
		\widetilde{\FX}_{|S|-1,2}(H) = \FX_{|S|-1,2}(H) = X(W,S). 
	\end{equation}
	that is, Coxeter complexes are quotients of $T_{|S|-1,2}$. 
\end{proposition}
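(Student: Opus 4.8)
The plan is to work with $d=|S|-1$, $k=2$, and $G=G_{d,2}$, the free product of $|S|$ copies of $C_2$; I write $I=\ls d\rs$ and $s_i=\varphi(\alpha_i)$, so that $\widetilde{\varphi}\colon G\to W$ is the induced epimorphism with $\ker\widetilde{\varphi}=H$ and $G/H\cong W$. The two basic facts I would record first are that $\widetilde{\varphi}(K_J)=\langle s_i:i\in J\rangle=W_J$ for every $J\subseteq I$ (immediate from the definition of $K_J$), and that consequently, since $H=\ker\widetilde{\varphi}$ is normal, $\widetilde{\varphi}^{-1}(W_J)=K_J H$. These identities transport the combinatorics of the cosets $W_{\widehat{J}}w$ defining $X(W,S)$ to the cosets $K_{\widehat{J}}g$ defining $\FX_{d,2}(H)$. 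As $H$ is normal, I would prove the two asserted equalities in turn: first that $\widetilde{\FX}_{d,2}(H)$ carries no multiplicity, and then that its base complex is isomorphic to $X(W,S)$.

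For the absence of multiplicity I would invoke the normal case of Proposition \ref{prop:condition_for_being_a_complex}, so it suffices to verify the intersection property at $g=e$. Unwinding the equivalence relation shows $\CA_{[K_{\widehat{J}}]_H}=\{g':[K_{\widehat{J}}]_H=[K_{\widehat{J}}g']_H\}=K_{\widehat{J}}H=\widetilde{\varphi}^{-1}(W_{\widehat{J}})$, and in particular $\CA_{[K_{\widehat{i}}]_H}=\widetilde{\varphi}^{-1}(W_{\widehat{i}})$. Therefore, for $J=\{i_0,\dots,i_j\}$,
\begin{equation}
	\bigcap_{r=0}^{j}\CA_{[K_{\widehat{i_r}}]_H}=\widetilde{\varphi}^{-1}\!\Big(\bigcap_{r=0}^{j}W_{\widehat{i_r}}\Big).
\end{equation}
The Coxeter intersection property of Lemma \ref{lem:coxeter_lem}, applied inductively, gives $\bigcap_{r=0}^{j}W_{\widehat{i_r}}=W_{\bigcap_r\widehat{i_r}}=W_{\widehat{J}}$, so the right-hand side equals $\widetilde{\varphi}^{-1}(W_{\widehat{J}})=K_{\widehat{J}}H=\CA_{[K_{\widehat{J}}]_H}$. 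This is exactly the intersection property at $g=e$, whence by Proposition \ref{prop:condition_for_being_a_complex} the object $\widetilde{\FX}_{|S|-1,2}(H)$ is a genuine simplicial complex, i.e. $\widetilde{\FX}_{|S|-1,2}(H)=\FX_{|S|-1,2}(H)$.

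To identify the base complex with $X(W,S)$ I would define a map on $0$-cells by $\Theta([K_{\widehat{i}}g]_H)=W_{\widehat{i}}\,\widetilde{\varphi}(g)$. Well-definedness and injectivity both follow from the preimage identity: $[K_{\widehat{i}}g]_H=[K_{\widehat{i}}g']_H$ means $g=wg'h$ with $w\in K_{\widehat{i}}$ and $h\in H$, which, applying $\widetilde{\varphi}$ and using $\widetilde{\varphi}(h)=e$ and $\widetilde{\varphi}(w)\in W_{\widehat{i}}$, is equivalent to $W_{\widehat{i}}\widetilde{\varphi}(g)=W_{\widehat{i}}\widetilde{\varphi}(g')$; surjectivity is immediate from surjectivity of $\widetilde{\varphi}$. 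Since a $j$-cell of $\FX_{d,2}(H)$ is $\{[K_{\widehat{i_0}}g]_H,\dots,[K_{\widehat{i_j}}g]_H\}$, its image under $\Theta$ is $\{W_{\widehat{i_0}}\widetilde{\varphi}(g),\dots,W_{\widehat{i_j}}\widetilde{\varphi}(g)\}$, which is precisely the cell of $X(W,S)$ attached to the coset $W_{\widehat{J}}\widetilde{\varphi}(g)$, and conversely every cell of $X(W,S)$ arises this way as $\widetilde{\varphi}$ is onto. As $\Theta$ is a colour-preserving bijection of vertex sets carrying cells bijectively to cells, it is a simplicial isomorphism $\FX_{|S|-1,2}(H)\cong X(W,S)$, which combined with the previous paragraph yields the proposition.

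The only genuinely nontrivial input is the Coxeter intersection identity $\bigcap_r W_{\widehat{i_r}}=W_{\widehat{J}}$ supplied by Lemma \ref{lem:coxeter_lem}; everything else is a formal translation through $\widetilde{\varphi}$. I expect the main obstacle to be purely bookkeeping: keeping the two equivalence relations aligned — the $H$-orbit relation on cosets of $K_{\widehat{J}}$ in $G$ versus ordinary coset equality in $W$ — and using the normality of $H$ correctly so that $K_{\widehat{J}}H=\widetilde{\varphi}^{-1}(W_{\widehat{J}})$ holds exactly as stated, which is what both the no-multiplicity step and the vertex bijection rely upon.
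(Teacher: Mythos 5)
Your proposal is correct and follows essentially the same route as the paper, whose proof is just the one-line citation of $G_{|S|-1,2}/H\cong W$, Proposition \ref{prop:condition_for_being_a_complex} (in its normal-subgroup form), and Lemma \ref{lem:coxeter_lem}. You have simply filled in the details the paper leaves implicit — the identification $\CA_{[K_{\widehat{J}}]_H}=K_{\widehat{J}}H=\widetilde{\varphi}^{-1}(W_{\widehat{J}})$, the reduction of the intersection property to $\bigcap_r W_{\widehat{i_r}}=W_{\widehat{J}}$, and the explicit vertex bijection — all of which check out.
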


\begin{proof}
	This follows from the fact that $G_{|S|-1,2}/H = W$ together with Proposition \ref{prop:condition_for_being_a_complex} and Lemma \ref{lem:coxeter_lem}. 
\end{proof}

%**********************************************************************************************

\subsection{The flag complexes $S(d,q)$}

Let $d\geq 3$ and $q\in \BN$ a prime power. Let $\BF=\BF_q$ be the field with $q$-elements and $V=\BF^d$. The flag complex $S(d,q)$ is defined to be the $(d-2)$-dimensional complex whose $0$-cells are non-trivial proper subspaces of $V$ and $\{W_0,\ldots,W_i\}$ is an $i$-cell of $S(d,q)$ if and only if, up to reordering of the vertices $W_0\subsetneq W_1 \subsetneq  \ldots \subsetneq W_i$. Note that $S(d,q)$ comes with a natural coloring $\gamma$ of its $0$-cells by $(d-1)$ colors, namely, $\gamma(W)=\dim(W)$ for $W\in S(d,q)^0$. Note that due to the definition of the cells in $S(d,q)$, the coloring $\gamma$ is indeed a coloring in the sense of Section \ref{sec:multicomplexes}, which makes $S(d,q)$ into a colorable complex. Furthermore, note that the degree of any $(d-3)$-cell $\si$ in $S(d,q)$ is $q+1$, the number of ways ``to complete'' the sequence of subspaces defining $\si$, to a full sequence of subspaces. As a result we can consider $S(d,q)$ with the coloring $\gamma$ and some ordering $\om$ to be an element of the category $\scrC_{d-2,q+1}$. 

Unfortunately, we do not know of any natural choice for the ordering of $S(d,q)$, which makes it difficult to give a simple description of the subgroup $H_{\text{flag}}\leq G_{d,k}$ associated with it. Note that by Theorem \ref{thm:from_subgroups_to_multicomplexes} and the action of $GL_d(\BF_q)$ on the $(d-2)$-cells we have
we have
\begin{equation}
	PGL_d(\BF_q)/B \cong S(d,q)^{d-2} \cong G_{d-2,q+1}/H_{\text{flag}},
\end{equation}
where $B$ is the Borel subgroup of $PGL_d(\BF_q)$. In particular, the index of $H_{\text{flag}}$ which equals the number of $(d-2)$-cells in $S(d,q)$, i.e., the number of complete flags in $\BF_q^d$, is $\prod_{i=2}^d \frac{q^i-1}{q-1}$.

%**********************************************************************************************

\subsection{Bruhat-Tits buildings of type $\widetilde{A}_d(q)$}

Let $F$ be a local non-Archimedean field with residue class $q$. Let $\FG = PGL_{d+1}(F)$ and let $B(F)=B_d(F)$ the Bruhat-Tits building associated with it. This is a building of type $\widetilde{A}_d(q)$. The building $B(F)$ can be defined in several different ways. The following description shows that the link of every vertex of $B(F)$ is the complex $S(d+1,q)$ described in the previous subsection and in particular that $B(F)$ is $(q+1)$-regular. Let $\CO$ be the ring of integers in $F$. For every $F$-basis $\{\beta_0,\ldots,\beta_d\}$ of $F^{d+1}$ consider the $\CO$-lattice $L=\CO\beta_0+\ldots +\CO \beta_d$. We say that two such lattices $L_1$ and $L_2$ are equivalent if there exists $0\neq \mu\in F$ such that $L_1 = \mu L_2$. The set of equivalence classes of lattices is the vertex set of $B(F)$. Put an edge between $[L_1]$ and $[L_2]$ if there exist representatives $L_1'\in [L_1]$ and $L_2'\in [L_2]$ such that $\pi L_1'\subset L_2'\subset L_1'$, where $\pi$ is a uniformizer of $\CO$ (e.g. for $F=\BQ_p$ and $\CO = \BZ_p$ the $p$-adic integers, $\pi$ can be taken to be $p$). The simplicial complex $B(F)$ is then defined to be the clique complex with respect to the above adjacency relation. 

It is known, see \cite{Bro98}, that $B(F)$ is a locally finite, contractible $(d-1)$-dimensional simplicial complex, all of its $(d-2)$-cells are of degree $q+1$. Furthermore, the links of its vertices are all isomorphic to $S(d+1,q)$ and so it is link-connected. Thus by Theorem \ref{thm:main_theorem} is isomorphic to $T_{d-1,q+1}/H$ for a suitable $H$. 

This quotient can be used to answer in the negative a question asked in \cite{PR12}. In order to describe the question some additional definitions are needed. For a $d$-dimensional simplicial complex $X$, let $\Om^{d-1}(X;\BR)$ be the vector space of $(d-1)$-forms
\begin{equation}
	\Om^{d-1}(X;\BR) = \{f:X^{d-1}_\pm \to \BR ~:~ f(\overline{\si}) = -f(\si),\quad \forall \si\in X^{d-1}_\pm\},
\end{equation} 
where $X^{d-1}_\pm$ is the set of oriented $(d-1)$-cells and $\overline{\si}$ is the oriented cell $\si$ with the opposite orientation to $\si$. 

Define $\Delta^+$ to be the linear operator on $\Om^{d-1}(X;\BR)$ given by 
\begin{equation}
	\Delta^+ f(\si) = \deg (\si) f(\si) - \sum_{\si' \sim\si} f(\si'),
\end{equation}
where two oriented $(d-1)$-cells $\si,\si'$ are called neighbors (denoted $\si\sim\si'$) if their union is a $d$-cell on which they induce the opposite orientation. 

\begin{remark}$~$
\begin{itemize}
	\item $\Delta^+$ defined above is the same operator as the one obtained via the composition of the appropriate boundary and coboundary operators, see \cite{PR12}.
	\item If $X$ is a graph, i.e., $d=1$, this recovers the standard graph Laplacian. 
\end{itemize}
\end{remark}

Finally, we define the spectral gap of $\Delta^+$ to be the minimum of the spectrum on the non-trivial eigenfunctions, that is 
\begin{equation}
	\lam(X) = \min \textrm{Spec}(\Delta^+|_{(B^{d-1})^\bot}),
\end{equation}
where $B^{d-1}$ denotes the space of $(d-1)$ $\BR$-coboundaries. We refer the reader to \cite[Section 3]{PR12} for additional details and further discussion on the spectral gap. 

Turning back to the question: If $X$ and $Y$ are graphs and $\pi:X\to Y$ is a covering then $\lam(X)\geq \lam(Y)$. If $X$ is connected, this fact can be seen from the interpretation of $\lam(X)$ as $\limsup_{n\to\infty} k- \sqrt[n]{\text{number of closed paths from }v_0 \text{ to iteself}}$, where $v_0$ is any vertex in the graph. Indeed, any closed path in $X$ from $v_0$ to itself corresponds to a closed path in $Y$, so $\lam(X)$ is as least as big as $\lam(Y)$. 

In \cite{PR12} is was asked whether the same holds for general simplicial complexes, i.e., if $X$ and $Y$ are $d$-dimensional simplicial complexes and $\pi:X\to Y$ is a covering map, does $\lam(X)\geq \lam(Y)$?

We can now see that this is not the case. The spectrum of $\Delta^+$ of $T_{d,k}$ was calculated in \cite{PR12}, from which one can see that
\begin{equation}
	\lam(T_{d,k}) = \begin{cases}
		0 & 2\leq k\leq d\\
		k+d-1-2\sqrt{d(k-1)} & k\geq d+1
	\end{cases}.
\end{equation}
On the other hand, for the building $B(F)=B_2(F)$ of $PGL_3(F)$ and of its finite Ramanujan quotients, it was shown in \cite{GP14} that 
\begin{equation}
	\lam(B(F))= q+1-2\sqrt{q+1}. 
\end{equation}
Thus for $d=2$ and $k=q+1$ we obtain that $\lam(B(F))>\lam(T_{2,k})$, which implies a negative answer to the question above. 

\begin{remark}
	The counter example $\pi:T_{2,q+1} \to B_2(F)$ can be replaced by a finite counter example: Indeed, using a deep result of Lafforgue \cite{Laf02}, it was shown in \cite{LSV05} that when $F$ is of positive characteristic, $B_2(F)$ has finite quotients which are Ramanujan complexes. For such a complex $Y$, the spectrum of the upper Laplacian is contained in the spectrum of the upper Laplacian of $B_2(F)$ (see also \cite{Fir16}). Hence, $\lam(Y)\geq q+1-2\sqrt{q+1}$. On the other hand, by a general result in the same spirit as \cite[Corollary 3.9]{PR12}  $T_{2,q+1}$ can be approximated by finite quotients $X_i$ of it, whose spectrum converges to that of $T_{2,q+1}$. Moreover, one can choose $X_i$ so that for each one of them there is a surjective morphism $\pi_i:X_i \to Y$. Indeed, let $H$ be the finite index subgroup of $G=G_{2,q+1}$ associated with $Y$, and then take $H_i$ to be a decreasing chain of finite index subgroups of $H$ with $\bigcap_i H_i = \{e\}$. Such a chain exists since $G$,  and hence also $H$, is residually finite. Then $X_i = \widetilde{\FX}_{d,k}(H_i)$ will have this property. Therefore, for large enough $i$, $\lam(X_i)\leq \lam(T_{2,q+1})+\varepsilon = q+2-\sqrt{2q}+\varepsilon < 1+q-2\sqrt{q}\leq \lam(Y)$. 
\end{remark}

%**********************************************************************************************
%**********************************************************************************************

\section{Random multicomplexes} \label{sec:random_multicomplexes}

The correspondence established in Sections \ref{sec:from_subgroup_to_elements_of_the_category} and \ref{sec:from_simplicial_complexes_to_subgroups_and_back} between the link-connected objects in $\scrC_{d,k}$ with $n$ distinct $d$-multicells and the subgroups of $G_{d,k}$ of index $n$ enables us to present a convenient model for random elements of $\scrC_{d,k}$. 

The subgroups of index $n$ are in $1$ to $(n-1)!$ correspondence with the transitive action of the group $G_{d,k}$ on the set $[n]$ of $n$ elements. This is a general fact that holds for every group $G$. Indeed, for every such transitive action of $G$, let $H$ be the stabilizer of $1\in [n]$, which is an index $n$ subgroup since the action is transitive. Conversely, if $H$ is an $n$-index subgroup of $G$, then the action of $G$ gives a transitive action on a set with $n$ elements - the left cosets of $H$ in $G$. Now, every bijection from this set of cosets of $H$ to $[n]$ which sends $H$ to $1$ gives rise to a transitive action of $G$ on $[n]$ with $H$ being the stabilizer of $1$. There are $(n-1)!$ such bijections. The reader is referred to \cite{LS03} for an extensive use of this argument. 

As $G_{d,k}$ is a free product of $(d+1)$ cyclic groups $K_i = \langle \alpha_i | \alpha_i^k=e\rangle$, a homomorphism $\varphi:G_{d,k}\to \CS_{[n]}$ is completely determined by the images of $(\alpha_i)_{i\in \ls d\rs}$ . Each such $\varphi(\alpha_i)$ should be a permutation of order $k$, namely, a product of disjoint cycles of lengths dividing $k$. Conversely, every choice of $(d+1)$ such permutations $\beta_0,\ldots,\beta_d$ determines a unique homomorphism from $G_{d,k}$ to $\CS_{[n]}$. These homomorphisms are not necessarily transitive but they are so with high probability, when chosen independently and uniformly at random, as long as $d\geq 2$ or $d=1$ and $k\geq 3$. This can be proved directly by estimating the probability of a proper subset $A\subsetneq [n]$ to be invariant under such permutation, and summing over all possible $A$'s. 

So, all together a typical choice of such $\beta_i$'s leads to a $k$-regular multicomplex. 

We plan to come to the study of this random model of multicomplexes in a followup paper. 

\begin{remark}
	As mentioned before, the results of this paper are true also for $k=\infty$, in which case we get a random model for all $d$-dimensional colorable, link-connected multicomplexes by choosing $(d+1)$ random permutations of $\CS_{[n]}$. 
\end{remark}

%**********************************************************************************************%**********************************************************************************************
\begin{appendix}

\section{Regular graphs as Schreier graphs}\label{sec:regular_graphs_as_schreier_graphs}
\label{sec:appendix}

The concept of a Cayley graph is a very important one for graph theory and group theory alike.
Let $G$ be a group and let $S$ be a set of generators such that the identity element $e$ is not in $S$. Assume, moreover, that $S$ is symmetric: $s \in S \Leftrightarrow s^{-1} \in S$. Then $Y:=\text{Cay}(G;S)$, the Cayley graph of $G$ with respect to $S$, is defined by $V(Y)=G$ and $E(Y) = \{\{g,sg\}:g \in G, s \in S\}$. Note that $Y$ does not contain loops or multiple edges. Cayley graphs are $|S|$-regular graphs, but the overwhelming majority of regular graphs are not Cayley graphs. 

It is natural to seek an algebraic construction for regular graphs that can generate all or almost all regular graphs. This is the motivation for the study of Schreier graphs, which are a vast generalization of Cayley graphs. There are several competing definitions of a Schreier graph in the literature, each with its own advantages and disadvantages.

Let $G$ be a group, $H \leq G$ a subgroup and $S$ a set of generators as above. One way to define the Schreier graph $X := \text{Sch}(G/H;S)$ is to take $V = G/H$ and $E = \{\{gH,sgH \}:g \in G,s \in S\}$. Note that $E$ is a set, rather than a multiset, and so $X$ is a graph (although it may contain loops). The disadvantage is that $X$ may not be regular. In general, all that can be said about the vertex degrees is that they are between $1$ and $|S|$.

Another possibility is to take $E$ to be a multiset, with one edge between $gH$ and $sgH$ for every $s\in S$. Using this definition, $X$ is a $2|S|$-regular multigraph, and in fact every regular multigraph of even degree is a Schreier graph in this sense \cite{Gr77}. The problem is that this definition does not allow one to generate regular graphs and multigraphs of odd degree.

Consistent with our approach in this paper, we prefer what we believe is the most natural definition, i.e., we define $X$ as the quotient of the Cayley graph $Y = \text{Cay}(G;S)$ by the action of the subgroup $H$. The group $G$ acts on $Y$ from the right by multiplication, and moreover this action preserves the graph structure, that is, $\{y_1,y_2\}.g := \{y_1.g^{-1},y_2.g^{-1}\}$ is an edge in $Y$ if and only if $\{y_1,y_2\}$ is an edge. We define now $X = \text{Sch}(G/H;S)$ to be the multigraph whose vertices $V(X)$ are the orbits of vertices of $Y$, namely $G/H$, and whose  multiedges, $E(X)$, are the orbits of $Y$'s edges with respect to the action of $H$, where an orbit $[\{y_1,y_2\}]_H:=\{\{y_1,y_2\}.h:h \in H\}$ connects the vertices $y_1 H$ and $y_2 H$. Thus, $E(X)$ is a multiset which may contain loops and multiple edges. The natural projection from $G$ to $G/H$ induces a surjective graph homomorphism from $Y$ to $X$.

Let us now define $2$-factors and perfect matchings in a multigraph, in a way which is perhaps not the most common in the literature: A $2$-factor (respectively, a perfect matching) in a multigraph $\mathcal{G} = \langle V,E \rangle$ is a multisubset $F \subseteq E$ of edges such that every vertex $v \in V$ either belongs to a unique loop in $F$ or to exactly two edges in $F$ (respectively, a unique edge in $F$).

\begin{proposition} \label{prop:schreier}
A connected $k$-regular multigraph $X$ is a Schreier graph if and only if its edges form a disjoint union of perfect matchings and $2$-factors. In fact, it is a union of $m$ perfect matchings and $f$ $2$-factors with $m+2f = k$.
\end{proposition}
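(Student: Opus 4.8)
The plan is to prove both implications by translating the symmetric generating set $S$ into a labelling of the edges of $X$, extending the argument used for Proposition \ref{prop:the_graph_case} from involutions to arbitrary generators. The key dictionary is: an involution $s=s^{-1}$ in $S$ corresponds to a perfect matching, while an inverse-pair $\{s,s^{-1}\}$ with $s\neq s^{-1}$ corresponds to a $2$-factor. Since a Cayley graph $Y=\mathrm{Cay}(G;S)$ is $|S|$-regular and $H$ acts freely on $V(Y)=G$ by right multiplication, the natural projection $Y\to X$ identifies the edge-ends at a representative $g$ with the edge-ends at the orbit $gH$, so $X=\mathrm{Sch}(G/H;S)$ is $|S|$-regular (counting loops twice). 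Hence whenever $X$ is $k$-regular we get $|S|=k$, and splitting $S$ into $m$ involutions and $f$ inverse-pairs yields $m+2f=|S|=k$.

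For the implication ``decomposition $\Rightarrow$ Schreier graph'', suppose $E(X)$ is the disjoint union of perfect matchings $M_1,\dots,M_m$ and $2$-factors $F_1,\dots,F_f$ with $m+2f=k$. Each matching $M_i$ defines an involution $\sigma_i\in\CS_V$ (swap the endpoints of each non-loop edge, fix every vertex carrying a loop), and each $2$-factor $F_j$, being a disjoint union of cycles covering $V$, defines after a choice of orientation a permutation $\rho_j\in\CS_V$ sending each vertex to its successor along its cycle (again fixing the vertices carrying a loop). Setting $S=\{\sigma_1,\dots,\sigma_m\}\cup\{\rho_1^{\pm 1},\dots,\rho_f^{\pm 1}\}$, a symmetric set of size $k$, and $G_0=\langle S\rangle\leq\CS_V$, the connectivity of $X$ forces $G_0$ to act transitively on $V$; fixing $v_0\in V$ and $H_0=\mathrm{Stab}_{G_0}(v_0)$ identifies $V$ with $G_0/H_0$, and one checks that $\mathrm{Sch}(G_0/H_0;S)\cong X$, the $\sigma_i$-edges recovering $M_i$ and the $\rho_j^{\pm 1}$-edges recovering $F_j$.

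For the converse, write $X=\mathrm{Sch}(G/H;S)$ and partition $S$ into its involutions and its inverse-pairs. For an involution $s$, the $H$-orbits of the $s$-labelled edges $\{g,sg\}$ assign to each coset $gH$ a single edge to $sgH$, which is a loop precisely when $g^{-1}sg\in H$; thus the $s$-edges form a perfect matching. For a pair $\{s,s^{-1}\}$, the $s$-edge leaving $gH$ and the $s$-edge entering $gH$ from $s^{-1}gH$ give $gH$ degree two, so these edges assemble into disjoint cycles, i.e.\ a $2$-factor. Reading off $m$ matchings and $f$ $2$-factors from the $m$ involutions and $f$ pairs yields the claimed decomposition with $m+2f=|S|=k$.

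The main obstacle, and the part demanding genuine care, is the bookkeeping of loops and edge multiplicities under the quotient, so that the generator-type decomposition is an honest partition of the multiset $E(X)$ and matches the loop conventions of the appendix. Concretely one must verify: (i) when an edge $\{g,sg\}$ has both endpoints in a single coset, both of its ends project to $gH$, producing one loop that realizes the ``unique loop'' clause of the definitions; (ii) for a pair $\{s,s^{-1}\}$ with $g^{-1}sg\in H$, the $s$- and $s^{-1}$-edges at $gH$ are identified by the action of $g^{-1}sg$ and hence form a single loop rather than two, keeping $F_j$ a valid $2$-factor; and (iii) the analogous identification when $g^{-1}s^2g\in H$ but $g^{-1}sg\notin H$, producing a genuine double edge (a length-two cycle of $F_j$). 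Dually, in the construction direction one checks that the fixed points of $\sigma_i$ and $\rho_j$ reproduce exactly these loops and double edges, and one must dispose of the degenerate possibility that a matching or $2$-factor consisting entirely of loops would force the identity into $S$ (forbidden since $e\notin S$), by choosing orientations appropriately and, if necessary, treating such all-loop factors separately.
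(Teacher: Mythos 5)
Your proof follows essentially the same route as the paper's: the same dictionary (involutions of $S$ correspond to perfect matchings, inverse-pairs to $2$-factors), the same construction of permutations from a decomposition to realize $X$ as $\mathrm{Sch}(G_0/H_0;S)$, and the same colour-by-generator analysis of the quotient with the identical case distinctions for loops and double edges. The bookkeeping issues you flag (single vs.\ double loops, length-two cycles) are exactly the cases the paper works through, so the proposal is correct and essentially identical in approach.
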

\begin{proof}
We first show that every graph $X = \langle V,E \rangle$ whose edge set is the disjoint union of perfect matchings and $2$-factors is a Schreier graph. The proof is essentially the same as the proof of Proposition 1.1.

Let $E = \bigcup_i F_i$, where $\{F_i\}_i$ are pairwise disjoint perfect matchings and $2$-factors. By choosing a cyclic ordering for each cycle in such a $2$-factor, we can associate each $F_i$ with a permutation $\sigma_i \in \CS_{V}$. If $F_i$ is a perfect matching, then $\sigma_i$ is an involution.

Let $S = \{\sigma_i\}_i$, let $G_0$ be the subgroup of $\mathbb{S}_{V}$ generated by $S$, and set $H_0$ to be the stabilizer of some vertex $v_0$. Then one can check that $X \cong \text{Sch}(G_0/H_0;S)$. 

Conversely, we show that the edges of a Schreier graph form a disjoint union of perfect matchings and $2$-factors.

Let $G$ be a group, let $H \leq G$, and let $S$ be a symmetric set of generators not containing the identity element $e$. Let $S_2 = \{s \in S: s^2 = e\}$, and let $S' = S \setminus S_2$. We color the edges of the Cayley graph $Y = \text{Cay}(G;S)$ as follows: There is a color $c_s$ for each element $s \in S_2$, a color $c_{s'}=c_{s'^{-1}}$ for each pair of elements $\{s',s'^{-1}\} \subseteq S'$, and we color an edge $\{g,rg\}$ using the color $c_s$ if $r \in \{s,s^{-1}\}$. Note that every $y \in V(Y)$ is adjacent to one edge of color $c_s$ if $s \in S_2$, and to two edges of color $c_s$ if $s \in S'$. Let $F_s$ be the set of edges of color $c_s$.

Consider now the Schreier graph $X = \text{Sch}(G/H;S)$. Note first that the action of $G$ on the edges of $Y$ preserves the coloring. Thus, the coloring of the edges of $Y$ induces a coloring of the edges of $X$. Consequently, the images $\bar{F_s}$ of $F_s$ are disjoint from each other for different colors.
We will show that if $s \in S_2$, then $\bar{F_s}$ is a perfect matching, while for $s \in S'$ it is a $2$-factor. As these sets are disjoint, this will complete the proof.

Note that if an edge of $X$ of color $c_s$ is adjacent to a vertex $gH$, then it must connect $gH$ to either $sgH$ or to $s^{-1}gH$.
Now, for $s \in S_2$ and a given vertex $gH$ of $X$, there is a unique edge attached to it of color $c_s$, namely $\{gH,sgH\}$. If $gH = sgH$ (i.e., $g^{-1}sg \in H$), then this is a loop, but in either case $\bar{F_s}$ is a perfect matching.

Finally, fix $s \in S'$ and a vertex $gH$ in $X$.The image of $F_s$ gives rise to potentially two edges coming out of $gH$, $\{gH,sgH\}$ and $\{gH,s^{-1}gH\}$. Now, assume first that $gH = sgH$. In this case we also have $gH = s^{-1}gH$, and so $F_s$ induces a loop around $gH$. Note that this is a single loop, rather than a double loop, since here $g^{-1}s^{-1}g \in H$ and therefore $[\{g,sg\}]_H = [\{g,s^{-1}g\}]_H$.
If $sgH = s^{-1}gH$, but they are different from $gH$, then $F_s$ induces a cycle of length $2$ between $gH$ and $sgH$. Note that in this case the two edges do not coincide, because $[\{g,sg\}]_H = [\{g,s^{-1}g\}]_H$ implies that either $s = s^{-1}$ or $gH = sgH$.
If the three vertices $gH,sgH$, and $s^{-1}gH$ are all different, then the degree of $gH$ in $\bar{F_s}$ is clearly $2$. Hence, we deduce that $\bar{F_s}$ is a $2$-factor as required.

\end{proof}

We now observe that most, but not all, regular graphs are Schreier graphs.

\begin{proposition}\label{prop:bipartite_graphs}$~$
\begin{enumerate}
\item Let $G = \langle U \amalg V,E\rangle$ be a $k$-regular bipartite multigraph. Then $|U|=|V|$ and $E$ is the disjoint union of $k$ perfect matchings, and hence a Schreier graph.
\item For every even $k \geq 2$, every connected $k$-regular graph is a Schreier graph.
\item For $k \geq 3$, with probability $1-o(1)$ a random $k$-regular graph on $n$ vertices chosen uniformly at random is a Schreier graph. Here $k$ is fixed and $n$ tends to infinity.
\end{enumerate}
\end{proposition}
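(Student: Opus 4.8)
The plan is to deduce all three parts from the characterization in Proposition \ref{prop:schreier}: a connected $k$-regular multigraph is a Schreier graph if and only if its edge set decomposes as a disjoint union of $m$ perfect matchings and $f$ $2$-factors with $m+2f=k$. Thus in each part it suffices to exhibit such a decomposition (and, since Schreier graphs are always connected, to have connectivity available).

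For part (1) I would first count edges from each side: a $k$-regular bipartite multigraph has $k|U|$ edges counted from $U$ and $k|V|$ counted from $V$, forcing $|U|=|V|$. To produce the matchings I would invoke König's edge-colouring theorem (equivalently, iterate Hall's marriage theorem, each time extracting a perfect matching and passing to the $(k-1)$-regular remainder): a $k$-regular bipartite multigraph is the disjoint union of $k$ perfect matchings. Applying Proposition \ref{prop:schreier} with $m=k$, $f=0$ finishes this part.

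For part (2), write $k=2r$. Since $X$ is connected and all degrees are even, it has an Eulerian circuit; orienting the edges along it makes every vertex have in-degree and out-degree $r$. I would then form the auxiliary bipartite graph on two copies $V^{+},V^{-}$ of the vertex set, joining $u^{+}$ to $w^{-}$ for each oriented edge $u\to w$; this graph is $r$-regular bipartite, so by part (1) it splits into $r$ perfect matchings. Each such matching is the graph of a permutation of the vertex set and hence pulls back to a $2$-factor of $X$, giving Petersen's $2$-factorization. Proposition \ref{prop:schreier} with $m=0$, $f=r$ then applies.

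Part (3) is where the real work lies. The key reduction is that for odd $k$ a connected $k$-regular graph (necessarily on an even number of vertices) is a Schreier graph if and only if it contains a perfect matching: given a perfect matching $M$, the complement is $(k-1)$-regular with $k-1$ even, hence $2$-factorizable by part (2), so Proposition \ref{prop:schreier} applies with $m=1$; conversely any admissible decomposition has $m+2f=k$ odd, forcing $m\ge 1$ and thus a perfect matching. For even $k$, part (2) already shows that connectivity alone makes a $k$-regular graph a Schreier graph. It then remains to cite the standard facts about the uniform random $k$-regular graph for $k\ge 3$: with probability $1-o(1)$ it is connected, and when $n$ is even it contains a perfect matching. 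Combining these, for even $k$ connectivity suffices and for odd $k$ the perfect matching supplies the missing ingredient, so the random graph is a Schreier graph with probability $1-o(1)$. I would obtain the two probabilistic inputs via the configuration model, transferring the high-probability statements to the uniform simple model using the fact that the probability of the configuration being simple is bounded away from $0$. This also explains the threshold $k\ge 3$: for $k=2$ the graph is a union of cycles and fails to be connected with high probability, hence is not a Schreier graph. The main obstacle is precisely part (3): the reduction to connectivity-plus-perfect-matching is clean, but one must assemble the correct high-probability results on random regular graphs and justify the passage between the configuration model and the uniform model.
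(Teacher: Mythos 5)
Your argument is correct, and parts (1) and (2) follow the paper's route: edge double-counting plus iterated Hall for (1), and Petersen's $2$-factorization fed into Proposition \ref{prop:schreier} for (2) (the paper simply cites Petersen \cite{Pet91}, whereas you reprove it via an Eulerian orientation and the bipartite case -- a nice self-contained touch, though not a different strategy). Where you genuinely diverge is part (3). The paper handles odd $k$ by invoking Robinson--Wormald (\cite{WR92}, Theorem 3), which says that a random $k$-regular graph on an even number of vertices decomposes into $k$ perfect matchings with probability $1-o(1)$; this is the decomposition $m=k$, $f=0$ in Proposition \ref{prop:schreier}. You instead reduce to ``connected plus one perfect matching'': remove a perfect matching $M$, observe that $G\setminus M$ is $(k-1)$-regular with $k-1$ even, $2$-factorize it by Petersen, and apply Proposition \ref{prop:schreier} with $m=1$, $f=(k-1)/2$. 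Your route buys something real: the probabilistic inputs you need (whp connectivity, whp existence of a single perfect matching, via the configuration model and the fact that the simplicity probability is bounded away from $0$) are older and more elementary than the $1$-factorization theorem the paper leans on, and your observation that for odd $k$ a connected regular graph is Schreier if and only if it has a perfect matching is a clean characterization the paper does not state. What the paper's citation buys is brevity. One small point to tidy in a write-up: $G\setminus M$ need not be connected, so you should apply the Eulerian-circuit argument component by component (Petersen's theorem does not require connectivity, but your stated proof of it in part (2) does), and then take unions over components to get $2$-factors of the whole graph. This is routine and does not affect correctness.
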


\begin{proof}$~$
\begin{enumerate}
\item
The fact that $|U|=|V|$ follows from a double count of the edges: The number of edges is equal to the sum of the degrees of vertices in $U$, so $|E|=k \cdot |V|$. By the same argument, $|E| = k \cdot |U|$, and so $|U|=|V|$.

For the second part of the proposition, it is sufficient to show that every $k$-regular bipartite graph $G = \langle U \cup V,E\rangle$ contains a perfect matching $M$. Since the remaining graph, whose edge set is $E \setminus M$, is $(k-1)$-regular, the result follows by induction on $k$.

We recall Hall's marriage theorem, which states that a bipartite graph $H = \langle U \amalg V,E\rangle$ contains a matching of cardinality $|U|$ if and only if for every $X \subseteq U$ we have $|N_H(X)| \geq |X|$, where $N_H(X)$ denotes the neighborhood of $X$ in $H$. Hall's theorem also holds for multigraphs.

Since $G$ is $k$-regular, for every set $X \subseteq U$ the number of edges adjacent to $X$ is $k\cdot|X|$. On the other hand, this is clearly a lower bound on the number of edges adjacent to $N(X)$, and so we have $k\cdot |X| \leq k \cdot |N_G(X)|$, which implies that $G$ has a matching of size $|U|$. Since $|U|=|V|$, this is a perfect matching.
\item This was proved in \cite{Gr77} with a different definition on Schreier graphs, but in fact, their proof relies on Petersen's theorem \cite{Pet91}, which states that every $k$-regular graph, for even $k\geq 2$, is an edge-disjoint union of $2$-factors. Therefore, by Proposition \ref{prop:schreier}, it is also Schreier according to our definition. 

\item When $k$ is even, all $k$-regular graphs are Schreier graphs as observed above. When $k$ is odd, the number $n$ of vertices in the graph must be even. It was shown by Wormald and Robinson (\cite{WR92}, Theorem 3) that in this case when $n$ tends to infinity, with probability $1-o(1)$ the edge set of a random $k$-regular graph on $n$ vertices has a decomposition into perfect matchings, so by Proposition \ref{prop:schreier} it is a Schreier graph.
\end{enumerate}
\end{proof}

\begin{proposition}\label{prop:not_a_Schrier_graph}
For odd $k \geq 3$, there exists a connected $k$-regular graph that is not a Schreier graph. 
\end{proposition}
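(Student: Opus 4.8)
The plan is to reduce the statement to the existence of a connected $k$-regular graph \emph{without a perfect matching}, and then to build such a graph explicitly.

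First I would invoke Proposition~\ref{prop:schreier}: a connected $k$-regular multigraph $X$ is a Schreier graph precisely when its edge set decomposes into $m$ perfect matchings and $f$ $2$-factors with $m+2f=k$. Since $k$ is odd, in any such decomposition $m=k-2f$ is odd, hence $m\geq 1$, so $X$ must contain at least one perfect matching. Contrapositively, any connected $k$-regular graph with no perfect matching fails to be a Schreier graph. Thus it suffices to exhibit, for each odd $k\geq 3$, a connected $k$-regular simple graph with no perfect matching.

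The construction I would use is a central vertex attached to $k$ odd gadgets. Concretely, I first produce a connected graph $H$ on $k+2$ vertices (an odd number, as $k$ is odd) with one distinguished vertex $w$ of degree $k-1$ and all remaining vertices of degree $k$. Such an $H$ exists: starting from $K_{k+2}$ (which is $(k+1)$-regular) one removes the edges of a simple graph $R$ on the same vertex set whose degree sequence is one vertex of degree $2$ and $k+1$ vertices of degree $1$. Such an $R$ is realizable as the disjoint union of a path on three vertices and a perfect matching on the remaining $k-1$ vertices (here $k-1$ is even because $k$ is odd), and deleting it lowers the degree of the central path-vertex to $k-1$ and of every other vertex to $k$. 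Since $R$ has only $(k+3)/2$ edges, $H=K_{k+2}\setminus R$ is connected for $k\geq 3$. Now take a new vertex $u$ together with $k$ disjoint copies $H_1,\dots,H_k$ of $H$, and join $u$ by a single edge to the distinguished vertex $w_i$ of each $H_i$. Then $u$ has degree $k$, each $w_i$ has degree $(k-1)+1=k$, and every other vertex keeps degree $k$, so the resulting graph $G$ is $k$-regular, and it is connected because each $H_i$ is connected and attached to $u$.

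Finally I would verify that $G$ has no perfect matching. Each $H_i$ meets the rest of $G$ only through the single edge $u w_i$, so in any matching at most one index $i$ can have $w_i$ matched to $u$; for every other $j$ the matching must restrict to a perfect matching of $H_j$. But $|V(H_j)|=k+2$ is odd, so $H_j$ admits no perfect matching, and since $k\geq 3$ at least $k-1\geq 2$ of the copies are forced to be matched internally, a contradiction. (Equivalently, $G-\{u\}$ has $k>1=|\{u\}|$ odd components, so Tutte's condition fails.) Hence $G$ is a connected $k$-regular graph that is not a Schreier graph. The only genuinely delicate point is the realizability and connectedness of the gadget $H$; everything else is bookkeeping with degrees and the matching argument. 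The parity juggling---using the oddness of $k$ both to force the gadget to have odd order and to guarantee a perfect matching on the $k-1$ vertices inside $R$---is what makes the construction go through, and is the place I would double-check the base case $k=3$, where $H=K_5\setminus R$ has degree sequence $(3,3,3,3,2)$.
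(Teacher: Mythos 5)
Your proof is correct, and it follows the same basic strategy as the paper's: both arguments appeal to Proposition~\ref{prop:schreier} and both build a $k$-regular graph with a cut vertex whose removal leaves $k$ odd components, each attached to the rest of the graph by a single bridge, so that no perfect matching can exist. The differences are worth noting. First, your reduction is slightly sharper: you observe that $m+2f=k$ odd forces $m\geq 1$, so any Schreier graph of odd degree must contain a perfect matching, and it therefore suffices to kill perfect matchings alone. The paper does not use this parity observation and instead verifies that its graph has neither a perfect matching \emph{nor} a $2$-factor (the latter because the root lies on no cycle); your route makes that second verification unnecessary. Second, your gadget is different and more economical: you take $K_{k+2}$ minus a path on three vertices and a matching on the remaining $k-1$ vertices, giving an odd lobe on $k+2$ vertices, whereas the paper uses the depth-$3$ balanced $k$-regular tree with $(k-1)$-regular graphs glued onto the leaves, giving lobes of size $1+(k-1)+(k-1)^2$. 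Your realizability and connectedness checks for $H$ are sound (connectedness is perhaps most cleanly seen from the fact that $H$ has minimum degree $k-1\geq (k+1)/2$, or from the observation that the removed graph $R$ cannot contain a spanning complete bipartite subgraph), and the final Tutte-type argument is exactly the one the paper uses. In short: same idea, a leaner construction, and a small parity shortcut that trims the case analysis.
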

\begin{proof}
This follows from a construction of a $k$-regular graph $G$ that contains neither a perfect matching nor a $2$-factor.

Let $G'$ be the complete balanced $k$-regular tree of depth $3$. That is, the root $r$ has $k$ children $x_1, ... ,x_k$, every node that is not the root or a leaf has $(k-1)$ children, and there is a path of length $3$ from each leaf to the root. Note that the degree of every vertex in $G'$ is $k$, except for the leaves, whose degree is $1$. For $1 \leq i\leq k$, let $T_i$ denote the set of vertices whose ancestor is $x_i$, and let $L_i := \{l^1_i, ... ,l^{(k-1)^2}_i\}$ be the $(k-1)^2$ leaves in $T_i$.  

We obtain a $k$-regular graph $G$ from $G'$ by adding the edges of a $(k-1)$-regular graph on the vertex set $L_i$ for every $1 \leq i \leq k$. Here we are relying on the basic fact that there exists an $r$-regular graph on $n$ vertices if and only if $n>r$ and $rn$ is even (See Figure \ref{fig:not_Sch} for an illustration of the case $k=3$). 

\begin{figure}[h]
	\begin{center}
	\includegraphics[scale=0.8]{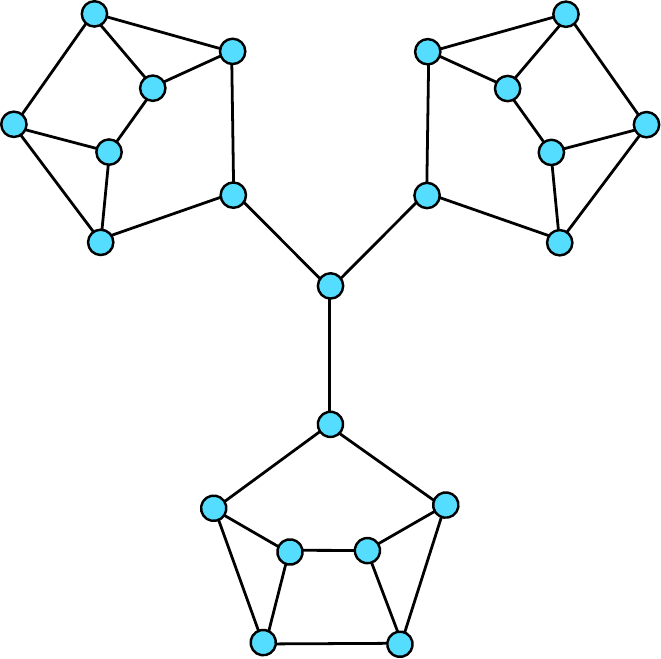}
	\caption{An illustration of a graph which is $3$-regular, but is not a Schreier graph. \label{fig:not_Sch}}
	\end{center}
\end{figure}

Now, $G$ has the following two properties:
\begin{itemize}
\item For every $1 \leq i \leq k$, the only edge from $T_i$ to $V(G)\setminus T_i$ is $\{r,x_i\}$.
\item The cardinality of $T_i$ is $1+(k-1)+(k-1)^2$, which is an odd number.
\end{itemize}
The first property implies that $G$ does not contain a $2$-factor. Indeed, any $2$-factor must have a cycle that contains the root, and the first property implies that the root does not belong to any cycle.

The second property implies that $G$ does not contain a perfect matching: Assume to the contrary that $M$ is a perfect matching in $G$. Let $x_{i_0}$ be the vertex that is matched to the root $r$ in $M$, and let $j_0 \ne i_0$. Then the vertices in $T_{j_0}$ must be matched to each other, since the only edge from $T_{j_0}$to $V(G)\setminus T_{j_0}$ is $\{r,x_{j_0}\}$. However, the cardinality of this vertex set is odd, and as in our graphs there are no loops, they cannot be matched to each other.
\end{proof}

\end{appendix}

% ###################################################################################
% ###################################################################################
% ###################################################################################
% ###################################################################################

\bibliography{Biblio}
\bibliographystyle{alpha}

\medskip{}

$~$\\
Institute of Mathematics, Hebrew University\\
Jerusalem 91904, \\
Israel.\\
E-mail: alex.lubotzky@mail.huji.ac.il

\bigskip{}
$~$\\
Institute of Theoretical Studies,\\
ETH Z{\"u}rich, 
CH-8092 Z{\"u}rich,\\ 
Switzerland.\\
E-mail: zluria@gmail.com

\bigskip{}
$~$\\
Departement Mathematik,\\
ETH Z{\"u}rich, CH-8092 Z{\"u}rich, \\
Switzerland.\\
E-mail: ron.rosenthal@math.ethz.ch

\end{document}